\renewcommand{\k}{{\kappa}}
\newcommand{\Gr}{{G^*}}
\newcommand{\Gd}{{G}}
\newcommand{\E}{{\ensuremath{\mathbb F}}}
\newcommand{\A}{{\mathsf A}}
\newcommand{\F}{{\mathcal F}}
\newcommand{\Ad}{\operatorname{Ad}}
\newcommand{\ev}{\operatorname{ev}}
\newcommand{\tcoev}{\wt{\operatorname{coev}}}
\newcommand{\tev}{\wt{\operatorname{ev}}}
\newcommand{\coev}{{\operatorname{coev}}}
\newcommand{\brk}[1]{{\left\langle{#1}\right\rangle}}
\newcommand{\FK}{\ensuremath{\Bbbk}}
\newcommand{\ve}{\varepsilon}
\newcommand{\ro}{r}
\newcommand{\g}{\ensuremath{\mathfrak{g}}}
\newcommand{\vp}{\varphi}
\newcommand{\kt}{$\Bbbk$\nobreakdash-\hspace{0pt}}
\newcommand{\kk}{\Bbbk}
\newcommand{\ideal}{\mathcal{I}}
\renewcommand{\t}{{\mathsf{t}}}
\newcommand{\ZUo}{{\mathcal Z^{0}}}
\newcommand{\catU}{\mathcal{D}}
\newcommand{\Proj}{\operatorname{\mathsf{Proj}}}
\newcommand{\UqHe}{\ensuremath{U_q^D}}
\newcommand{\UxiHe}{\ensuremath{U_\xi^{D}}}
\newcommand{\Uw}{\ensuremath{\wh{\UxiHe}}}
\newcommand{\UxiH}{\ensuremath{U_\xi^{H}}}
\newcommand{\Uxi}{\ensuremath{U_\xi}}
\newcommand{\FF}{\ensuremath{\mathscr F}}
\renewcommand{\F}[1]{{F^{[#1]}}} 
\newcommand{\s}{\sigma}
\newcommand{\e}{{\operatorname{e}}}
\newcommand{\slt}{{\mathfrak{sl}(2)}}
\newcommand{\SL}{{\operatorname{SL}}}
\newcommand{\unit}{\ensuremath{\mathbb{I}}}
\newcommand{\cat}{\mathscr{C}}
\newcommand{\ED}{\mathcal{E}_D}
\newcommand{\oo}{\infty}
\newcommand{\Id}{\operatorname{Id}}
\newcommand{\bp}[1]{{\left(#1\right)}}
\newcommand{\qn}[1]{{\left\{#1\right\}}}
\newcommand{\End}{\operatorname{End}}
\newcommand{\Hom}{\operatorname{Hom}}
\newcommand{\Bij}{\operatorname{Bij}}
\newcommand{\tr}{\operatorname{tr}}
\newcommand{\C}{\ensuremath{\mathbb{C}} }
\newcommand{\Z}{\ensuremath{\mathbb{Z}} }
\newcommand{\R}{\ensuremath{\mathbb{R}} }
\newcommand{\N}{\ensuremath{\mathbb{N}} }
\newcommand{\wb}{\overline}
\newcommand{\wh}{\widehat}
\newcommand{\wt}{\widetilde}
\newcommand{\wv}{\overrightarrow}
\newcommand{\wov}{\overleftarrow}
\newcommand{\ms}[1]{\mbox{\tiny$#1$}}
\newcommand{\mm}[1]{\mbox{\small$#1$}}
\newcommand{\qbin}[2]{\left[\!\!\begin{array}{c}
      #1 \\
      #2 \end{array}\!\!\right]}
\newcommand{\epsh}[2]
         {\begin{array}{c} \hspace{-1.3mm}
        \raisebox{-4pt}{\epsfig{figure=#1,height=#2}}
        \hspace{-1.9mm}\end{array}}
\newcommand{\Y}{\ensuremath{\mathsf{Y^*}}}
\newcommand{\Yd}{\ensuremath{\mathsf{Y}}}
\newcommand{\RY}{\ensuremath{\mathcal{R}}}
\newcommand{\B}{\ensuremath{{C}}}
\newcommand{\xl}{\ensuremath{\mathsf{x_L}}}
\newcommand{\xr}{\ensuremath{\mathsf{x_R}}}
\newcommand{\shift}{\ensuremath{\mathsf{s}}}
\renewcommand{\tt}{\ensuremath{\mathsf{t}}}
\newcommand{\y}{\underline y}
\renewcommand{\j}{\lambda}
\newcommand{\HHH}{H}
\newtheorem{theo}{Theorem}[section]
\newtheorem{lem}[theo]{Lemma}
\newtheorem{prop}[theo]{Proposition}
\newtheorem{cor}[theo]{Corollary}
\theoremstyle{definition}
\theoremstyle{remark}
\renewcommand{\qedsymbol}{\fbox{\thetheo}}
\newcounter{exo} \newcounter{numexercice}
\renewcommand{\theexo}{\arabic{exo}}
\begin{document}
\title{$G$-links invariants, Markov traces and the semi-cyclic
  $U_q\slt$-modules.}  

\author{Nathan Geer}
\address{Mathematics \& Statistics\\
  Utah State University \\
  Logan, Utah 84322, USA}
\thanks{Research of the first author was
  partially supported by NSF grants DMS-0968279 and DMS-1007197.}\
\email{nathan.geer@usu.edu}

\author{Bertrand Patureau-Mirand}
\address{UMR 6205, LMBA, universit\'e de Bretagne-Sud, universit\'e
  europ\'eenne de Bretagne, BP 573, 56017 Vannes, France }
\email{bertrand.patureau@univ-ubs.fr}

\begin{abstract} 
  Kashaev and Reshetikhin proposed a generalization of the
  Reshetikhin-Turaev link invariant construction to tangles with a
  flat connection in a principal $G$-bundle of the complement of the
  tangle.  The purpose of this paper is to adapt and renormalize their
  construction to define invariants of $G$-links using the semi-cyclic
  representations of the non-restricted quantum group associated to
  $\slt$, defined by De Concini and Kac.  Our construction uses a
  modified Markov trace.  In our main example, the semi-cyclic
  invariants are a natural extension of the generalized Alexander
  polynomial invariants defined by Akutsu, Deguchi, and Ohtsuki.
  Surprisingly, direct computations suggest that these invariants are
  actually equal.
\end{abstract}

\maketitle

\section{Introduction}

\subsection{} 
A major achievement in quantum topology was the construction of
invariants of links and tangles using representations of a quantum
group and more generally ribbon categories, due to Reshetikhin and
Turaev in \cite{RT}.  Turaev gave a theoretical generalization of this
construction \cite{T2000HFT,T2010HQFT} to $G$-links and $G$-tangles:
tangles with a flat connection in a principal $G$-bundle over the
complement of the tangle.  The construction is parallel to \cite{RT}
and leads to HQFT and invariants of 3-manifolds with a representation
of their fundamental group in $G$.  The algebraic data used in this
theory is a ribbon $\Gd$-category which in particular is a
$\Gd$-graded tensor category equipped with a homotopy braiding.  It is
an open problem to find interesting examples of ribbon
$\Gd$-categories where $\Gd$ is an infinite non-abelian group.

In \cite{KR05}, Kashaev and Reshetikhin propose a modification of
Turaev's construction: for a group $G$ having a factorization
$G=G_+G_-$, they introduce the group $\Gr=\Gd_+\times\Gd_-$ and define
a notion of a $\Gr$-ribbon category which is slightly different from
the previous one.  In particular, a $\Gr$-ribbon category $\cat$ is
$\Gr$-graded category which is equipped with what could be called a
\emph{holonomy braiding}.  Loosely speaking, a holonomy braiding is a
pair $(\RY, c)$ of maps described as follows.  First, the
factorization of $\Gd$ produces a map
$\RY:\Gr\times\Gr\to\Gr\times\Gr$ satisfying the set-theoretical
Yang-Baxter equation.  Second, $c$ is a natural isomorphism which
assigns to each pair of objects $(V,W)$ of $\cat$ an isomorphism
$V\otimes W\to W'\otimes V'$ where $(V',W')$ depends functorially on
$(V,W)$ and $c$ is a lift of $\RY$ to $\cat\times\cat$.  Given a
$\Gr$-ribbon category $\cat$ and a kind of section $\Gr\to Obj(\cat)$
the construction of \cite{KR05} produces a $\Gd$-link invariant.
As stated in the conclusion of \cite{KR05} the work of Kashaev and
Reshetikhin was motivated by the category of representation of the
unrestricted quantum groups at root of unity, in particular in the
case of quantum $\mathfrak gl_2$.  They show that even if this quantum
group does not have an $R$-matrix, the conjugation by the $R$-matrix
still induces a (not inner) automorphism of $U_q\mathfrak
gl_2^{\otimes2}$.  The point is that this automorphism induces a map
$\RY$ where the group $\Gr$ is obtained from the Poisson-Lie groups
factorization of $GL_2$ (see \cite{WX}).

The work of this paper is the beginning of a project to show that
$\Gd$-link invariant lead to invariants of 3-manifolds.  In this
context the motivating example comes from the representation theory
over a non-restricted quantum group at root of unity associated to any
simple Lie algebra $\g$, defined and studied by De Concini, Kac and
Procesi in \cite{DK, DKP, DKP2}.
The first step in obtaining such a 3-manifold invariant is to
construct a link invariant in the setting of these non-restricted
quantum groups.  In attempting to do this we ran into several
problems: First, the factorization $\Gd=\Gd_+\Gd_-$ is not strict in
this context as $\Gd_+\cap\Gd_-$ is not trivial.  Second, we were not
able to define a holonomy braiding for every pair of objects in the
category.  Third, we had problems checking the compatibility of the
partially defined holonomy braiding with the duality morphisms.
Finally, the quantum dimension of a generic $\Uxi(\g)$-module vanishes
which implies that the corresponding $\Gd$-link invariant is trivial.

In this paper we show that these problems can be overcome when $\Gd$
is the Borel of $SL_2(\C)$ and $\cat$ is the category of the so called
semi-cyclic representations of $\Uxi(\slt)$.  To do this, we first
slightly generalize the definition of a factorization of a group to
fit this example.  Then we use a section $\Gr\to\cat$ to show that it
is enough to consider a holonomy braiding defined on a generic set of
modules (this observation is already present in the work of
\cite{KR05}).  To avoid the third problem, we work with braids and a
$\Gd$-link generalization of Markov traces.  Finally, we re-normalize
the invariants to overcome the fourth problem.

The algebraic structures in the main example of this paper are related
to the work of Baseilhac in \cite{Bas2011} where he considers a bundle
over $\Gd$ of irreducible representation of $\Uxi(\slt)$ which is
relevant in 3-dimensional quantum hyperbolic field theories.

In \cite{ADO}, Akutsu, Deguchi, and Ohtsuki defined generalized
Alexander polynomial invariants of links.  These generalized
invariants can naturally be interpreted as maps on the space of
abelian representations of the fundamental group into $SL_2(\C)$, see
\cite{CGP}.  We call these invariants the \emph{nilpotent-ADO}
invariants.
The invariants of $\Gd$-links defined in this paper can be seen as the
natural extension of these maps to the space of reducible
representations of the fundamental group into $SL_2(\C)$ (a
representation $\rho$ is reducible if there is line in $\C^2$ which is
invariant under the action of the image of $\rho$).  Surprisingly,
experimental computations seem to indicate that this extension is
trivial.  In particular, the abelianization of a representation of the
fundamental group into the Borel of $SL_2(\C)$ is a cohomology class
in $H^1(S^3\setminus L,\C^*)$.  Then all of our computations suggest
that the invariants of this paper only depend on this abelianization
and so they seem to be equal to their corresponding nilpotent-ADO
invariants.

The paper is organized as follows. In Section \ref{S:GlinksandBraids}
we give the notions and some properties of factorized groups,
$\Gd$-links and $\Gr$-braids.  In Section \ref{GMarkovTraces} we
introduce $\Gr$-Markov trace and trace coloring systems.  In Section
\ref{S:ModTraces} we recall some basic results on modified traces,
which are the technical tools underlying our $\Gr$-Markov traces.
Section \ref{S:Example} is devoted to the example of the category of
semi-cyclic representations of $\Uxi(\slt)$.

\subsection{Acknowledgments} 
We would like to thank Nicolai Reshetikhin for many useful
discussions.  In particular, we thank him for his key suggestion to
use the semi-cyclic representations.  Also, Nicolai Reshetikhin and
Noah Snyder showed us a draft containing a factorization of the
$R$-matrix similar to that of $\check R_\xi$ in Theorem
\ref{T:3R-matrix}.

\section{Outline of how to define the invariant} 
In this section we give a short outline of how to use the general
results of this paper.  In Section~\ref{S:Example} we will apply these
results to the setting of $\Uxi(\slt)$.  The terms used in this
section will be defined rigorously in subsequent sections.

We start with a $\FK$-linear tensor category $\cat$ which is related
to a factorized group $(\Gd, \wb \Gd, \Gr)$ as follows.  Let
$\Yd\subset\Gd$ be a union of conjugacy classes in $\wb \Gd$ and $\Y$
be the corresponding set in $\Gr$.  Then there exists a bijection
$\RY:\Y\times\Y\to\Y\times\Y$ given by
$(x,y)\mapsto(\xl(x,y),\xr(x,y))$ which satisfies the Yang-Baxter
equation.  Here the functions $\xl$ and $\xr$ are defined using the
factorized group structure.  We require that the map $\RY$ is
compatible with a holonomy braiding: the category $\cat$ has a family
of objects $\A=\{A_y\}_{y\in \Y}$ and invertible morphisms
$$\{\B_{y,z}:A_y\otimes A_z\to A_{\xr(y,z)}\otimes A_{\xl(y,z)}\}_{y,z\in \Y}$$
satisfying the braid relations.  

A $\Yd$-admissible $\Gd$-link is a framed oriented link $L$ in $S^3$
equipped with a group homomorphism $\rho:\pi_1(S^3\setminus L,\oo)\to
\Gd$ such that $\rho(m)\in \Yd$ for any meridian $m$ of any component
of $L$.  If $\sigma$ is a braid with $n$ strands whose closure is $L$
then Proposition \ref{P:braid2} implies that $\rho$ induces a natural
$\Gr$-coloring on $\sigma$, i.e. a certain element $\y\in (\Y)^n$.
Then in the usual way the holonomy braiding and the braid generators
induces a map
$$
f_{(\sigma,\y)} : A_{y_1}\otimes A_{y_2}\otimes {\cdots} \otimes
A_{y_n}\to A_{y_1}\otimes A_{y_2}\otimes {\cdots} \otimes A_{y_n}.
$$

The final requirement on $\cat$ is the existence of a $\Gr$-Markov
trace which can be thought of as a family of twists $\{\theta_y\}_{
  y\in \Y}$ and a map $\t:\{f_{(\sigma,\y)}\}\to \FK$, where
$\{(\sigma,\y)\}$ are indexed by all $\Gr$-colored braids which
correspond to some $\Yd$-admissible $\Gd$-link.  The $\Gr$-Markov
trace satisfies some relations, including
\begin{equation}\label{E:OutlineMarkovTr}  
  \t\left(f_{(\sigma\sigma',\y)}\right)=\t\left(f_{(\sigma'\sigma,\y')}\right) 
  \;\;\; \text{ and }\;\;\;
  \t\left(f_{(\sigma_{n}^{\pm1}\sigma,\y'')}\right)= (\theta_y)^{\pm1}  
  \t\left(f_{(\sigma,\y)}\right) 
\end{equation}
where $\sigma_{n}$ is the $n$\textsuperscript{th} generator of braid
group on $n+1$ strands and $\y'$ and $\y''$ are certain tuples of
elements of $\Y$ determined by $(\sigma,\y)$.  If $(\sigma,y)$ is any
$\Gr$-colored braid corresponding to a $\Yd$-admissible $\Gd$-link
$L$, then Theorem~\ref{T:GlinkInv} states that
$F'(L)=\t\left(f_{(\sigma,\y)}\right)$ is a well defined invariant of
$L$.  The proof of the theorem is essentially that there exists a
finite number of $\Gr$-Markov moves relating any two $\Gr$-colored
braids $(\sigma,\y)$ and $(\sigma',\y')$ whose closures are isotopic
to the same $\Gd$-link.  Then the relations of
Equation~\eqref{E:OutlineMarkovTr} imply that the $\Gr$-Markov trace
preserves these $\Gr$-Markov moves and so leads to a well defined
invariant.

\section{${\Gd}$-links and $\Gr$-braids}\label{S:GlinksandBraids}

\newcommand{\mat}[2]{{\small
    \left(\begin{array}{cc}
      1&#2\\0&#1
    \end{array}\right)}}
\subsection{${\Gd}$-links and factorized groups}
Let $\Gd$ be a group.  A \emph{$G$-link} is a framed oriented link $L$
in $S^3$ and a group homomorphism $\rho:\pi_1(S^3\setminus L,\oo)\to
\Gd$.  We say $\rho$ is a representation on $L$.
If $\Yd$ is a union of conjugacy classes in $\wb\Gd$ of elements of
$\Gd$, we say that $\rho$ is a $\Yd$-admissible representation if
$\rho(m)\in \Yd$ for any meridian $m$ of any component of $L$.

Given a planar projection of $L$, the map $\rho$ can be extended to a
map on the meridian of the edges of the planar diagram.  This
extension is canonical in the context of a factorized group and leads
to a description of $G$-links as colored braids.  To do this we will
now define the notion of a \emph{factorized group} which is a tuple
$(\Gd, \wb \Gd, \Gr, \vp_+,\vp_-)$ where:

\begin{enumerate}
\item $\Gd,\wb \Gd$ and $\Gr$ are groups such that $\Gd$ is a normal
  subgroup of $\wb \Gd$,
\item there exists
group morphisms $\vp_+,\vp_-:\Gr\to\wb\Gd$ such that the map
$$
\psi:\Gr \to \wb\Gd \; \text{ given by } \; x \mapsto
\vp_+(x)\vp_-(x)^{-1}
$$
realize a bijection between $\Gr$ and $\Gd$.
\end{enumerate}
This notion of a factorized group is a slight generalization of the
treatment of factorizable groups given in \cite{KR05}.
 If
$x\in\Gd$, we denote $x_\pm=\vp_\pm(\psi^{-1}(x))\in\wb\Gd$.  So we
have $x=x_+x_-^{-1}$.  The map $\psi$ is not a group morphism but if
$x=\psi(x')$, $y=\psi(y')$ then $\psi(x'y')=x\,x_-yx_-^{-1}$ and
$\psi({x'}^{-1})=x_-^{-1}x^{-1}x_-$.  

\subsection{${\Gr}$-diagrams}\label{SS:diag}
Let $(\Gd, \wb \Gd, \Gr, \vp_+,\vp_-)$ be a factorized group.  Let $D$
be a regular planar projection of a framed oriented link $L$.  Then
$D$ is a quadrivalent graph embedded in $\R^2$ with the
``over-crossing information'' at its vertices.  Let $\ED$ be the set
of oriented edges of $D$.  A {\em $\Gr$-coloring of $D$} is a map
$\rho^*:\ED\to\Gr$ that satisfies
\begin{itemize}
\item for $e\in\ED$, we have $\rho^*(-e)=\rho^*(e)^{-1}$ where $-e$ is
  the edge $e$ with opposite orientation,
\item for any vertex $v$ of $D$, let $a,b,a',b'$ be the four edges
  adjacent to $v$ as shown in Figure~\ref{F:cross}, if $x=\rho^*(a),
  y=\rho^*(b), x'=\rho^*(a'), y'= \rho^*(b')\in \Gd$ then we have
\begin{figure}
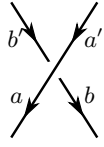

  \centering
    $\epsh{fig2}{12ex}\put(-33,-10){\mm{a}}\put(-5,-10){\mm{b}}
    \put(-34,11){\mm{b'}}\put(-5,11){\mm{a'}}$
 \caption{The edges at a crossing of $D$}\label{F:cross}
\end{figure}  
  \begin{equation}
    \label{eq:p-cross}
    \left\{
      \begin{array}{ccc}
        y'\,x'&=&x\,y\in\Gr\\\vp_+(y')\vp_-(x')&=&\vp_-(x)\vp_+(y)\in\wb\Gd.
      \end{array}
    \right.
  \end{equation}
\end{itemize}

\begin{prop}[see also \cite{KR05}]\label{P:BijRepCol}
  There is a natural bijection between the set of $\Gd$-valued
  representations on $L$ and the set of $\Gr$-colorings of $D$:
  $$\qn{\rho:\pi_1(S^3\setminus L,\oo)\to \Gd} \simeq \qn{\rho^*:\ED\to \Gr}.$$ 
  Furthermore, if $\Yd\subset\Gd$ is a union of conjugacy classes in
  $\wb\Gd$ and $\Y=\psi^{-1}(\Yd)$ then this bijection identifies
  $\Yd$-admissible representations with $\Gr$-colorings with values in
  $\Y$.
\end{prop}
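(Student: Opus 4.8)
The plan is to transport the Wirtinger presentation of $\pi_1(S^3\setminus L,\oo)$ through the bijection $\psi:\Gr\to\Gd$, so that both sides of the statement become labelings of the edges of $D$ subject to matching local conditions at the crossings; the proposition then collapses to a single local computation at a crossing. Concretely, I would fix the Wirtinger presentation attached to the quadrivalent diagram $D$: place $\oo$ above the plane of $D$ and, for each oriented edge $e\in\ED$, let $m_e$ be the standard meridian loop (descending from $\oo$, once positively around $e$, returning; $m_{-e}=m_e^{-1}$), which together generate $\pi_1(S^3\setminus L,\oo)$. Compared with the usual presentation, whose arcs run from one undercrossing to the next, the diagram $D$ is obtained by also cutting the overstrands at the crossings, so each crossing contributes two relations: the usual Wirtinger conjugation of the underpassing meridian, and the equality of the two overstrand meridians (a path running just above the overstrand passes under nothing). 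Thus a $\Gd$-representation on $L$ is exactly a map $\ED\to\Gd$, $e\mapsto\rho(m_e)$, which inverts on reversed edges and obeys these two relations at every crossing.

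Next I would write down the candidate bijection edgewise: given $\rho$, set $\rho^*(e)=\psi^{-1}(\rho(m_e))$ on each edge carrying the orientation of $L$ and extend by $\rho^*(-e)=\rho^*(e)^{-1}$; given $\rho^*$, set $\rho(m_e)=\psi(\rho^*(e))$ on the generators. Since $\psi$ is a bijection these assignments are mutually inverse, and they are canonical, built only from the standard meridians $m_e$ of the planar diagram $D$. The proposition thus reduces to the purely local claim: at a crossing $v$ with adjacent edges labeled as in Figure~\ref{F:cross}, the elements $x,y,x',y'\in\Gr$ satisfy \eqref{eq:p-cross} if and only if their images $X=\psi(x),\,Y=\psi(y),\,X'=\psi(x'),\,Y'=\psi(y')\in\Gd$ satisfy the two Wirtinger relations at $v$.

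This local equivalence is the crux, and I expect it to be essentially the only work. It is a direct computation from the formulas for $\psi$ on a product and on an inverse recalled above, $\psi(x'y')=x\,x_-yx_-^{-1}$ and $\psi({x'}^{-1})=x_-^{-1}x^{-1}x_-$, together with $X=X_+X_-^{-1}$ and the fact that $\vp_+,\vp_-$ are group morphisms: one expands the overstrand-unchanged relation and the conjugation relation at $v$ in terms of $x,y,x',y'$, and, since $\psi$ is not a homomorphism, a single relation in $\Gd$ splits into two, which turn out to be exactly the equality in $\Gr$ (first line of \eqref{eq:p-cross}) and the equality in $\wb\Gd$ obtained by separating $\vp_+$- and $\vp_-$-parts (second line); conversely the pair \eqref{eq:p-cross} pins down $(x',y')$ uniquely in terms of $(x,y)$ --- this uniqueness being precisely what the map $\RY$ records --- and forces the Wirtinger relations back. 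The only genuinely delicate points are conventional: fixing which of the two strands at $v$ is the overstrand, tracking the $\pm1$ exponents produced by the edge orientations, and checking that both signs of crossing are covered by \eqref{eq:p-cross}.

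For the admissibility addendum, $m_e$ is a meridian of the component of $L$ carrying $e$, and any two meridians of a single component are conjugate in $\pi_1(S^3\setminus L,\oo)$; since $\Yd$ is a union of $\wb\Gd$-conjugacy classes, $\rho$ is $\Yd$-admissible if and only if $\rho(m_e)\in\Yd$ for every edge $e$, which by $\Y=\psi^{-1}(\Yd)$ is the same as the condition $\rho^*(e)\in\Y$ for every edge $e$. Hence the bijection of the first part restricts to the asserted bijection between $\Yd$-admissible representations and $\Gr$-colorings with values in $\Y$.
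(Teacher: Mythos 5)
Your reduction breaks exactly at the step you call the crux. You propose that the correspondence is the edgewise map $\rho^*(e)=\psi^{-1}(\rho(m_e))$ with every meridian $m_e$ based at the single basepoint $\oo$, and that the conditions \eqref{eq:p-cross} at a crossing are equivalent, under $\psi$, to the single-basepoint Wirtinger relations (over-strand meridian unchanged, under-strand meridian conjugated by it). That local equivalence is false, and the ``direct computation'' you defer does not go through. Indeed, \eqref{eq:p-cross} is equivalent to $(x',y')=\RY(x,y)$, and by construction $\psi(\xr(x,y))=\vp_-(x)\psi(y)\vp_-(x)^{-1}$ while $\psi(\xl(x,y))=\vp_+(\xr(x,y))^{-1}\psi(x)\vp_+(\xr(x,y))$: generically \emph{both} outgoing edges have their $\Gd$-value conjugated nontrivially, so no edge keeps its $\psi$-value across the crossing. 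In the example of Section \ref{SS:ExG} this is concrete: $\RY$ changes the $\ve$-coordinate of both entries, so $\psi(x')\neq\psi(x)$ and $\psi(y')\neq\psi(y)$ as soon as $\ve_1\neq0$ and $\kappa_2\neq1$. By contrast, Wirtinger relations at the common basepoint $\oo$ force the two over-strand values to coincide. Hence $e\mapsto\psi^{-1}(\rho(m_e))$ (meridians at $\oo$) does not in general produce a $\Gr$-coloring, and no choice of over/under or sign convention at the vertex repairs this; the two systems of local equations are simply not matched by the identity on edge labels.

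What your argument is missing is the mechanism the actual proof is built on: the meridians must be based at points $M_i$ chosen in the regions of $D$, i.e. one works in the groupoid $\wb\pi=\pi_1(S^3\setminus L,\{M_i\})$, and $\rho$ is extended canonically to $\wb\pi$ by $\wb\rho(\wv R_i^{\pm})=\rho(\wv R_i)_\pm$, which uses the factorization $x=x_+x_-^{-1}$ applied to the region monodromies. The color of an edge is $\rho^*(e)=\psi^{-1}(\wb\rho(m_e))$ for the meridian based in the region to the right of $e$, and its $\vp_+$- and $\vp_-$-components are the half-meridians $\wb\rho(m_e^{\pm})$; it is the relations of Lemma \ref{L:GroupoidGen} among these halves, not the $\oo$-based Wirtinger relations, that translate into \eqref{eq:p-cross}. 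In particular, the second line of \eqref{eq:p-cross} couples the $\vp_+$-part of one edge with the $\vp_-$-part of another, data that cannot be recovered from the $\oo$-based values $\rho(m_e)\in\Gd$ alone, so the statement cannot be made ``purely local'' in your sense. Your admissibility paragraph is fine (meridians of a component are conjugate and $\Yd$ is a union of $\wb\Gd$-conjugacy classes), but it rests on the first part; to repair the proof you would need to introduce the regional basepoints and the $\pm$-splitting of the connecting paths, at which point you are reconstructing the paper's argument.
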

We will prove Proposition \ref{P:BijRepCol} in Section \ref{SS:bij}.
\subsection{${\Gr}$-braids}

Let $\HHH$ be a set and let $\RY: \HHH\times\HHH\to\HHH\times\HHH$ be
a bijective map that satisfies the set-theoretical Yang-Baxter
equation:
\begin{equation}
  \label{eq:YB}
  \RY_{23}\RY_{13}\RY_{12}=\RY_{12}\RY_{13}\RY_{23}
\end{equation}
here all mappings are from $(\HHH)^{\times 3}$ to itself and
$\RY_{ij}$ is the usual mapping (e.g.  the mapping $\RY_{12}$ acts as
$\RY$ in the first two factors and trivially in the last one).

Let $\tau:\HHH\times\HHH\to\HHH\times\HHH$ be the flip map defined by
$(y,y')\mapsto (y',y)$.  For $n\ge 1$, let
$\{\sigma_i\}_{i=1,{\ldots},n-1}$ be the usual generators of the braid
group $B_n$ on $n$ strands.  Let $\Bij(\HHH^n)$ be the group of
bijections of $\HHH^n$.  Consider the group homomorphism $B_n\to
\Bij(\HHH^n)$, $\sigma \mapsto \sigma_\RY$ induced by the assignment
$\sigma_i\mapsto ( \tau\circ\RY )_{i,i+1} $ where $ ( \tau\circ\RY
)_{i,i+1}$ is the bijection of $\HHH^n$ given by $( \tau\circ\RY )$ in
the $\text{i}^{\text{th}}$ and $\text{(i +1)}^{\text{th}}$ slots and
$\Id_\HHH$ otherwise.  A \emph{$\HHH$-coloring} of the braid $\sigma$
is a fixed point of this action, i.e.  an element
$\y=(y_1,\ldots,y_n)\in \HHH^n$ such that $\sigma_\RY(\y)=\y$.

We will now show that a factorized group gives rise to a natural
Yang-Baxter map $\RY$.
Let $(\Gd, \wb \Gd, \Gr, \vp_+,\vp_-)$ be a factorized group.  As
before, let $\Yd\subset\Gd$ be a union of conjugacy classes in $\wb\Gd$ and
$\Y=\psi^{-1}(\Yd)$. Let $\xr,\; \xl:\Gr \times \Gr\to \Gr $ be the
maps defined by
$$\xr(x,y)=\psi^{-1}\big(\vp_-(x)\psi(y)\vp_-(x)^{-1}\big)\; \text{ and } 
\;\xl(x,y)=\psi^{-1}\big(\vp_+(\xr(x,y))^{-1}\psi(x)\vp_+(\xr(x,y))\big).$$
\begin{prop}\label{P:braid}
  The map $\RY:\Gr\times\Gr\to \Gr\times\Gr$ defined by
  $(x,y)\mapsto(\xl(x,y),\xr(x,y))$ is bijective and satisfies the
  Yang-Baxter Equation \eqref{eq:YB}. Moreover, it restricts to a
  bijection $\RY:\Y\times\Y\to\Y\times\Y$.
\end{prop}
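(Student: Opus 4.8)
The plan is to verify bijectivity and the Yang--Baxter equation by reformulating everything in terms of the group $\wb\Gd$, where computations are genuinely associative. Recall $\psi(x')=\vp_+(x')\vp_-(x')^{-1}$ and write $x=\psi(x')$, $y=\psi(y')$, with $x_\pm=\vp_\pm(\psi^{-1}(x))$, so $x=x_+x_-^{-1}$. Set $y''=\xr(x,y)$ and $x''=\xl(x,y)$; by definition $\psi(y'')=x_-\,\psi(y)\,x_-^{-1}=x_-yx_-^{-1}$ and $\psi(x'')=\vp_+(y'')^{-1}\psi(x)\vp_+(y'')=(y'')_+^{-1}\,x\,(y'')_+$. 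The first move is to record the two identities these impose in $\wb\Gd$: since $\psi$ is injective and $\Gd$ is normal in $\wb\Gd$, conjugation by $x_-$ sends $\Gd$ to $\Gd$ so $y''$ is well defined, and likewise for $x''$. Then I would establish the \emph{covering relation} $\vp_+(y'')\vp_-(x'')=\vp_-(x)\vp_+(y)$: indeed $\vp_-(x)\vp_+(y)=x_-\vp_+(y')$ and $\vp_+(y'')\vp_-(x'')=\vp_+(y'')\cdot\vp_+(y'')^{-1}\vp_+(x')\cdot(\text{correction})$, and a short manipulation using $\psi(y'')=x_-yx_-^{-1}$ forces the products of the $\vp_\pm$-components to match, which is precisely Equation \eqref{eq:p-cross} with $(a,b,a',b')\leftrightarrow(x',y',x'',y'')$. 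This shows $\RY$ is exactly the crossing map of the $\Gr$-diagram combinatorics, and it also gives $y''x''=xy$ in $\Gr$ from $\psi(y''x'')=\psi(x_-yx_-^{-1}\cdot(y'')_+^{-1}x(y'')_+)$ after simplification, i.e. $\RY$ preserves the product $xy\in\Gr$ of the two inputs.

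Next, bijectivity. The cleanest route is to exhibit an explicit inverse. Given $(x'',y'')$, I recover $x_-$: from $\psi(y'')=x_-yx_-^{-1}$ and $y''x''=xy$, the component equation $\vp_+(y'')\vp_-(x'')=\vp_-(x)\vp_+(y)$ lets me solve for $y=\psi(y')$ as $\vp_-(x)^{-1}\vp_+(y'')\vp_-(x'')\cdot$(the $\vp_-$ part), and since $x=y''x''(y')^{-1}$ this becomes a triangular system: first $x_-$ is determined as $\vp_-$ of something expressible through $x'',y''$, then $x$ via $\psi(y''x'')=\psi(x)\cdot x_-\psi(y)x_-^{-1}$, then $y$. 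Concretely, the inverse is $(x'',y'')\mapsto(x,y)$ with $x=\psi^{-1}\!\big(\vp_+(y'')\psi(x'')\vp_+(y'')^{-1}\big)$ and $y=\psi^{-1}\!\big(x_-^{-1}\psi(y'')x_-\big)$; one checks directly that composing in either order returns the identity. This is the step I expect to be the fiddliest: keeping track of which $\vp_\pm$-component is being conjugated, since $\psi$ is not a homomorphism and the mixed relations $\psi(x'y')=x\,x_-yx_-^{-1}$, $\psi(x'^{-1})=x_-^{-1}x^{-1}x_-$ from the previous subsection must be used carefully at each substitution.

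For the Yang--Baxter equation \eqref{eq:YB}, the conceptual point is that $\RY$ is nothing but the holonomy of the $\Gr$-coloring across a crossing, and both sides of \eqref{eq:YB} compute the colors on the six outgoing edges of the two sides of a Reidemeister-III move. So I would take a triple $(x,y,z)$, apply the defining formulas for $\xl,\xr$ twice along each of the two orders $\RY_{23}\RY_{13}\RY_{12}$ and $\RY_{12}\RY_{13}\RY_{23}$, and push everything through $\psi$ into $\wb\Gd$. Because in $\wb\Gd$ the expressions become iterated conjugations $w\mapsto g w g^{-1}$ with $g$ a product of $\vp_\pm$-values, associativity in $\wb\Gd$ collapses the two sides to the same word; the only content is that the \emph{same} $g$ appears on both sides, which follows from the covering relation established in the first step together with the fact (to be checked once) that $\vp_+$ and $\vp_-$ are genuine homomorphisms $\Gr\to\wb\Gd$. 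Finally, the restriction $\RY:\Y\times\Y\to\Y\times\Y$ is immediate: $\Yd$ is a union of conjugacy classes in $\wb\Gd$, and $\psi(\xr(x,y))=x_-\psi(y)x_-^{-1}$, $\psi(\xl(x,y))=(y'')_+^{-1}\psi(x)(y'')_+$ are conjugates in $\wb\Gd$ of $\psi(y),\psi(x)\in\Yd$, hence still in $\Yd$; surjectivity onto $\Y\times\Y$ follows since the explicit inverse is built from the same kind of conjugations. The main obstacle, as noted, is bookkeeping in step two rather than any genuine difficulty: once the inverse is written down correctly, bijectivity and the set-theoretic Yang--Baxter identity are forced by associativity in $\wb\Gd$.
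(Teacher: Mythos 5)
Your proposal is correct in substance and rests on the same key idea as the paper: identify $\RY(x,y)$ as the unique solution $(x',y')$ of the crossing system $y'x'=xy$, $\vp_+(y')\vp_-(x')=\vp_-(x)\vp_+(y)$ (the paper's Equations \eqref{eq:p-cross2}, i.e. \eqref{eq:p-cross}), and get the restriction to $\Y\times\Y$ from the fact that $\psi(\xl(x,y))$ and $\psi(\xr(x,y))$ are $\wb\Gd$-conjugates of $\psi(x)$ and $\psi(y)$. Where you differ: for bijectivity you exhibit an explicit two-sided inverse, recovering first $\psi(x)=\vp_+(y'')\psi(x'')\vp_+(y'')^{-1}$ and then $\psi(y)=\vp_-(x)^{-1}\psi(y'')\vp_-(x)$; this triangular formula is correct and is more explicit than the paper, which gets invertibility implicitly from unique solvability of the symmetric system. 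For the Yang--Baxter equation \eqref{eq:YB} the paper avoids computation by showing that both $\RY_{23}\RY_{13}\RY_{12}$ and $\RY_{12}\RY_{13}\RY_{23}$ are characterized by one and the same system ($z'y'x'=xyz$ together with two mixed $\vp_\pm$-equations), whereas you propose a direct verification of the six output colors; that verification does go through, but it is not mere associativity in $\wb\Gd$: matching the middle outputs forces you to use the covering relation at an intermediate crossing to express the $\vp_-$-component of an $\xl$-value (which the conjugation formula alone does not give), and matching the remaining outputs uses the product relation $y''x''=xy$ and the equalities already established. So your step 1 (that the formulas for $\xl,\xr$ actually solve \eqref{eq:p-cross}) is the load-bearing computation and must be done honestly; as written it is only sketched, the displayed manipulation is garbled, and you repeatedly conflate elements of $\Gr$ with their images under $\psi$ in $\Gd$ (e.g. writing $x=\psi(x')$ and then feeding $x$ to $\xr$ and to $\vp_\pm$). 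None of this is a wrong idea --- every step can be completed along the lines you indicate --- but the two computations you defer are precisely the content of the proposition, which the paper's uniqueness argument is designed to package more efficiently.
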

We will prove Proposition \ref{P:braid} in Section \ref{SS:bij}.  

\begin{prop}\label{P:braid2}  Let $\sigma\in B_n$ be a braid whose braid 
  closure is isotopic to a framed oriented link $L$.  There is a
  natural bijection between the set of $\Gd$-valued representations on
  $L$ and the set of $\Gr$-colorings of $\sigma$:
  $$\qn{\rho:\pi_1(S^3\setminus L,\oo)\to \Gd} \simeq 
  \qn{\y\in (\Gr)^n \; |\; \sigma_\RY(\y)=\y }.$$ Furthermore, this
  bijection identifies $\Yd$-admissible representation with
  $\Y$-coloring of $\sigma$.
\end{prop}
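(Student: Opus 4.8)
The plan is to deduce Proposition~\ref{P:braid2} from Proposition~\ref{P:BijRepCol} by relating a braid $\sigma$ to a particular planar diagram $D_\sigma$ of its closure $L$ and then matching $\Gr$-colorings of $D_\sigma$ with fixed points of the braid action $\sigma_\RY$ on $(\Gr)^n$. First I would fix the standard planar picture of the braid closure: draw $\sigma$ in a rectangle with $n$ incoming strands on the bottom and $n$ outgoing strands on the top, and close it up with $n$ nested ``turn-back'' arcs on the right (or left). Orient all the strands downward through the braid box so that each of the $n$ closure arcs is a coherently oriented edge. The edges of $D_\sigma$ then fall into two groups: the $n$ ``external'' edges coming from the closure arcs, which I will use to read off the tuple $\y=(y_1,\dots,y_n)$ via $y_i=\rho^*(e_i)$, and the ``internal'' edges inside the braid box, which are determined by the $y_i$ once we impose the crossing relations.

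The key computation is to check that, at each positive crossing $\sigma_j$ in the braid word, the crossing relation~\eqref{eq:p-cross} forces the pair of outgoing colors to be exactly $(\tau\circ\RY)$ applied to the pair of incoming colors, in the $j,j{+}1$ slots. Concretely, if the two strands entering a crossing carry colors $x$ (left/over-strand) and $y$ (right/under-strand) with values in $\Y$, then the definitions of $\xr,\xl$ preceding Proposition~\ref{P:braid} and the relation~\eqref{eq:p-cross} give the outgoing colors on the two strands, and comparing with the conventions for $a,b,a',b'$ in Figure~\ref{F:cross} shows these are $(y',x')=(\xr(x,y),\xl(x,y))$, i.e. precisely $(\tau\circ\RY)(x,y)$. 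Running a strand from the bottom of the braid box, through all the crossings of the braid word for $\sigma$, up to the top, the composite of these local moves is by construction the bijection $\sigma_\RY$ of $(\Gr)^n$. Hence the colors at the top of the box are $\sigma_\RY(\y)$, where $\y$ is the tuple of colors at the bottom.

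Now I assemble the bijection. Given a $\Gr$-coloring $\rho^*$ of $D_\sigma$, restrict to the $n$ closure arcs to get $\y\in(\Gr)^n$; since each closure arc connects the $i$-th output at the top of the box to the $i$-th input at the bottom, consistency of $\rho^*$ along these arcs together with the previous paragraph says exactly $\sigma_\RY(\y)=\y$, so $\y$ is an $\HHH$-coloring of $\sigma$ in the sense defined above (with $\HHH=\Gr$). Conversely, given a fixed point $\y$ of $\sigma_\RY$, propagate the colors through the braid box using the local crossing formula: this produces a well-defined map on all internal edges (well-defined because each crossing relation is satisfiable and determines the outputs from the inputs), and the fixed-point condition guarantees the colors match up across the $n$ closure arcs, yielding a genuine $\Gr$-coloring of $D_\sigma$. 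These two constructions are mutually inverse. Composing with the bijection of Proposition~\ref{P:BijRepCol} (applied to the diagram $D_\sigma$ of $L$) between $\Gd$-valued representations on $L$ and $\Gr$-colorings of $D_\sigma$, we obtain the desired bijection $\{\rho\}\simeq\{\y\in(\Gr)^n\mid\sigma_\RY(\y)=\y\}$. The final sentence about $\Yd$-admissibility is immediate: under Proposition~\ref{P:BijRepCol} admissible representations correspond to $\rho^*$ valued in $\Y=\psi^{-1}(\Yd)$, and since every edge of $D_\sigma$ lies on a strand that eventually passes through a closure arc (and $\RY$ restricts to $\Y\times\Y$ by Proposition~\ref{P:braid}), $\rho^*$ is $\Y$-valued everywhere iff $\y\in(\Y)^n$.

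I expect the main obstacle to be purely bookkeeping: getting the orientation and over/under conventions in Figure~\ref{F:cross} to line up with the convention $\sigma_i\mapsto(\tau\circ\RY)_{i,i+1}$, and in particular making sure the flip $\tau$ appears with the correct handedness so that the closure-arc gluing really does read as the equation $\sigma_\RY(\y)=\y$ rather than its conjugate or inverse. Once the conventions are pinned down, verifying that one positive crossing realizes $\tau\circ\RY$ is a direct unwinding of the definitions of $\xr,\xl$ against~\eqref{eq:p-cross}, and everything else is formal.
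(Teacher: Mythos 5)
Your proposal is correct and follows essentially the same route as the paper: the paper also represents the closure of $\sigma$ by the planar diagram encoded by the braid word, invokes Proposition~\ref{P:BijRepCol} to identify representations with $\Gr$-colorings of that diagram, and then matches such colorings with fixed points of $\sigma_\RY$ by propagating colors through the crossings via $\RY$ and its inverse (the equivalence of the crossing relation~\eqref{eq:p-cross} with $\RY(x,y)=(x',y')$ being exactly the content of the proof of Proposition~\ref{P:braid}). Your extra care with orientation conventions and the $\Yd$-admissibility statement only fills in details the paper leaves implicit.
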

We will prove Proposition \ref{P:braid2} in Section \ref{SS:bij}.
\subsection{Example}\label{SS:ExG}
In this paper we will work with the following factorized group.  Let
$\wb\Gd$ be the group of $2\times2$ upper triangular invertible
matrices and $\Gd=\wb\Gd\cap\SL_2(\C)$. Let
$$\Gr=\qn{\mat\kappa\ve:\kappa\in\C^*,\ve\in\C}$$ and let 
$\vp_-:\Gr\to\wb\Gd$ be the inclusion map.   
Finally, let $\vp_+:\Gr\to\wb\Gd$ be the map given by
$$\vp_+\left(\mat\kappa\ve\right)={\small \left(\begin{array}{cc}
      \kappa&0\\0&1
    \end{array}\right)}.$$
Thus we have factorized the upper triangular matrices of $\SL_2(\C)$
as
$$ {\small \left(\begin{array}{cc}
      \kappa&\ve\\0&\kappa^{-1}
    \end{array}\right)}=\psi\bp{\small \left(\begin{array}{cc}
      1&-\ve\\0&\kappa
    \end{array}\right)}={\small \left(\begin{array}{cc}
      \kappa&0\\0&1
    \end{array}\right)}\times\mat\kappa{-\ve}^{-1}.$$
Then Proposition \ref{P:braid} gives a Yang-Baxter map:
$$\RY\bp{\mat{\kappa_1}{\ve_1},\mat{\kappa_2}{\ve_2}}=
\bp{\mat{\kappa_1}{\ve_1/\kappa_2},
\mat{\kappa_2}{(\ve_2+\ve_1(\kappa_2-\kappa_2^{-1}))/\kappa_1}}.$$ 
In this example, we define
$\Y=\qn{\mat\kappa\ve:\kappa\in\C^*\setminus\{\pm1\},\ve\in\C}$.  
Then $\Yd=\psi(\Y)$ is the set of matrices in $\Gd$ whose trace is not
$\pm2$ and so $\Yd$ is a union of conjugacy classes in $\wb \Gd$. 
\subsection{Proofs of propositions}\label{SS:bij}
In this subsection we will prove Propositions \ref{P:BijRepCol},
\ref{P:braid} and \ref{P:braid2} using the ideas of \cite{KR05}.  To
do this we first need to develop some terminology.

Let $L$ be a framed oriented link $L$ in $S^3=\R^3\cup \oo$.  Let
$D\subset\R^2\times\{0\}$ be a regular planar projection of $L$.
The diagram $D$ splits $\R^2\times\{0\}$ into regions
$R_\oo,R_1,\ldots,R_n$ (where we assume that $R_\oo$ is the infinite
region).  For each choose a point $M_i\times \{0\}$ in $R_i\subset
\R^2\times\{0\}$.  Then the fundamental group $\pi_1(S^3\setminus
L,\oo)$ is generated by the downward oriented vertical lines $\wv
R_i=M_i\times \R$, which are loops through $\oo$.  Here $\wv R_\oo$ is
the trivial loop.  Split the loop $\wv R_i$ into two paths $\wv
R_i^-=M_i\times \R^-$ and $\wv R_i^+=M_i\times \R^+$ both oriented
from $\oo$ to $M_i$ (see Figure \ref{F:link}(A)).
\begin{figure}
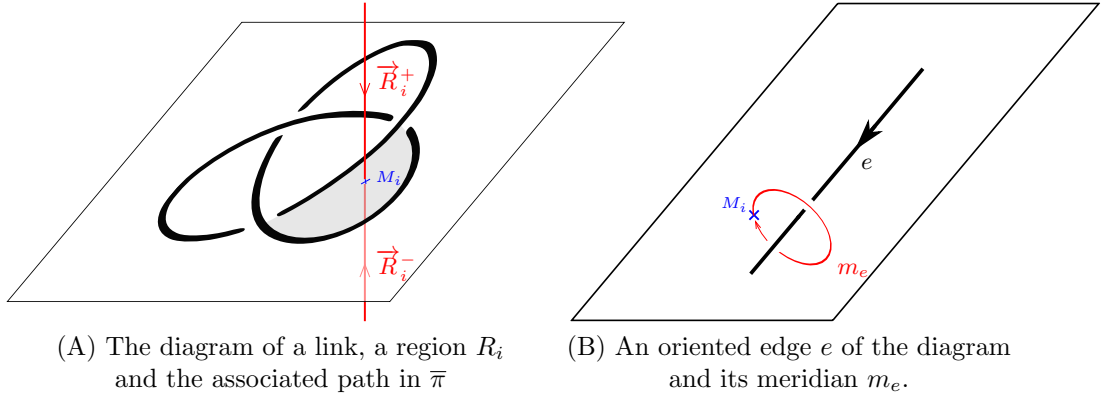

 \begin{minipage}[b]{.5\linewidth}
  \centering
    $\epsh{trefoil}{28ex}\put(-94,-5){\ms{\color{blue}M_i}}
    \put(-94,30){\mm{\color{red}\wv R_i^+}}
    \put(-94,-40){\mm{\color{red}\wv R_i^-}}$
    \\[1ex]
   (A) The diagram of a link, a region $R_i$\\ and the associated
   path in $\wb\pi$
 \end{minipage} %\hfill
 \begin{minipage}[b]{.4\linewidth}
  \centering
 $\epsh{fig1}{28ex}\put(-143,-15){\ms{\color{blue}M_i}}
    \put(-99,-40){\mm{\color{red}m_e }}
    \put(-90,0){\mm{e}}$ 
    \\[1ex]
   (B) An oriented edge $e$ of the diagram and its meridian $m_e$.
 \end{minipage}
 \caption{Diagrams of a link}\label{F:link}
\end{figure}
In the groupoid $\wb\pi=\pi_1(S^3\setminus L,\{M_i\})$, we have $\wv
R_i=\wv R_i^+\wov R_i^-$ (where $\wov X$ is defined to be the path
$\wv X$ in the opposite direction and we use the contravariant
concatenation of paths in which $\gamma\delta$ is defined when
$\gamma(1)=\delta(0)$).

As above, let $\ED$ be the set of oriented edges of $D$.  If $e\in\ED$
is an oriented edge with regions $R_i$ on the right and $R_j$ on the
left, we define the meridian of $e$ as the loop based at $M_i$ given
by $m_e=\wov R_i^+\wv R_j^+\wov R_j^-\wv R_i^-\in\wb\pi$, see Figure
\ref{F:link}(B).  (Note the orientation of $m_e$ is opposite to the
one induced by $e$.)  We have $m_e=m_e^+(m_e^-)^{-1}$ where
$m_e^+=\wov R_i^+\wv R_j^+$ and $m_e^-=\wov R_i^-\wv R_j^-$.  In $
\wb\pi$, also we have $m_e=\wov R_i^+\wv R_j\wov R_i\wv R_i^+$ which
is conjugate to the loop $\wv R_j\wov R_i$ based at $\oo$.

The following lemma can be deduced from the classical presentation of
$\pi_1(S^3\setminus L,\oo)$ associated to the diagram $D$.
\begin{lem}\label{L:GroupoidGen}
  The groupoid $\wb \pi=\pi_1(S^3\setminus L,\{M_i\})$ is generated by
  the elements $\{m^+_e,m^-_e:e\in \ED\}$ subject to the following
  relations~: 
\begin{itemize}
\item For any $e\in\ED$, if $-e$ is the same edge with opposite
  orientation, then $m_{-e}^{\pm}=(m_{e}^{\pm})^{-1}$.
\item For any vertex $v$ of $D$, if $a,b,a',b'$ are the four edges
  adjacent to $v$ as shown in Figure \ref{F:cross} then one has
  \begin{equation}
    \label{eq:m-cross}
    \left\{
      \begin{array}{ccc}
        m_{b'}^+\,m_{a'}^+&=&m_{a}^+\,m_{b}^+\\
        m_{b'}^-\,m_{a'}^-&=&m_{a}^-\,m_{b}^-\\
        m_{b'}^+\,m_{a'}^-&=&m_{a}^-\,m_{b}^+.
      \end{array}
    \right.
  \end{equation}
\end{itemize}  
\end{lem}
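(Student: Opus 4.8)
The plan is to derive the stated presentation of the groupoid $\wb\pi=\pi_1(S^3\setminus L,\{M_i\})$ from the classical Wirtinger-type presentation of $\pi_1(S^3\setminus L,\oo)$ associated to the diagram $D$. First I would recall that $\pi_1(S^3\setminus L,\oo)$ is generated by the vertical loops $\wv R_i$ (one per bounded region), with $\wv R_\oo$ trivial, and with one relation per vertex of $D$: at a crossing the four adjacent regions give loops whose product around the vertex is trivial, which after rearranging reads $\wv R_j\,\wov R_i\cdot(\text{under-strand region loop})=\cdots$ — more precisely, the relation expressing that conjugating one ``meridian'' region-pair by the under-arc gives the other region-pair. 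The key translation step is the identity already recorded in the text: in $\wb\pi$ one has $m_e=\wov R_i^+\wv R_j^+\wov R_j^-\wv R_i^-$ and, after splitting $\wv R_k=\wv R_k^+\wov R_k^-$, the element $m_e$ is conjugate to $\wv R_j\wov R_i$. So the passage from the region-loop presentation to the $\{m_e^\pm\}$ presentation is a Tietze transformation: introduce the new generators $m_e^\pm=\wov R_i^\pm\wv R_j^\pm$ for each oriented edge $e$ (with $R_i$ on the right, $R_j$ on the left), express the old generators in terms of the new, and rewrite the old relations.

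The first bullet, $m_{-e}^\pm=(m_e^\pm)^{-1}$, is immediate from the definition $m_e^\pm=\wov R_i^\pm\wv R_j^\pm$ and the fact that reversing $e$ swaps the roles of $R_i$ and $R_j$, since $\wov R_i^\pm=(\wv R_i^\pm)^{-1}$ in the groupoid. For the vertex relations, I would fix a crossing $v$ with the four edges $a,b,a',b'$ labelled as in Figure \ref{F:cross}, let $R_1,R_2,R_3,R_4$ be the four regions around $v$ in cyclic order, and simply compute $m_a^\pm$, $m_b^\pm$, $m_{a'}^\pm$, $m_{b'}^\pm$ in terms of the $\wv R_k^\pm$ and the six half-paths meeting at $v$. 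Because each half-path $\wv R_k^\pm=M_k\times\R^\pm$ appears in exactly the two edge-meridians of the two edges bordering $R_k$ near $v$, the products $m_{b'}^+m_{a'}^+$ and $m_a^+m_b^+$ both telescope to the same expression $\wov R_{i_1}^+\wv R_{i_3}^+$ (the ``diagonal'' pair across the crossing), giving the first equation of \eqref{eq:m-cross}; the ``$-$'' version is identical with $+$ replaced by $-$ throughout; and the mixed relation $m_{b'}^+m_{a'}^-=m_a^-m_b^+$ comes from the same telescoping but reading one column of half-paths at level $\R^-$ and the other at level $\R^+$ — i.e. it is exactly the relation recording how the over-arc of the crossing permutes the ``$+$/$-$'' bookkeeping, which is the geometric content of the over/under information at $v$. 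Finally I would check that these relations, together with the first bullet, are \emph{complete}: running the Tietze transformation backwards, the classical single relation at $v$ is recovered as a consequence of the three equations in \eqref{eq:m-cross} (multiply the first by the inverse of the third, etc.), and the region loops $\wv R_k$ are recovered from $\wv R_k=\wv R_k^+\wov R_k^-$ and the chain of edge-meridians around $R_k$.

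The step I expect to be the main obstacle is the careful bookkeeping at a crossing: one must pin down, once and for all, the cyclic order of the four regions relative to the four edges $a,b,a',b'$ and the orientations, and then verify that the telescoping of half-paths really does produce \emph{precisely} the three equations of \eqref{eq:m-cross} with the correct placement of $+$ and $-$ and the correct left/right order — in particular that it is $m_a^-m_b^+$ and not $m_a^+m_b^-$ on the right of the third relation, which is where the over/under information of the crossing enters and where a sign or side error would be easy to make. Everything else (the first bullet, the completeness check, the Tietze formalism) is routine once this local picture at a vertex is fixed, so I would present that local computation in full detail, ideally with the half-paths at $v$ drawn or at least explicitly enumerated, and treat the rest briefly.
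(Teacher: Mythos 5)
The paper itself gives no proof of this lemma --- it only remarks that it ``can be deduced from the classical presentation of $\pi_1(S^3\setminus L,\oo)$ associated to the diagram $D$'' --- so your route (rewrite the region/Dehn-type presentation in terms of the elements $m_e^\pm=\wov R_i^\pm\wv R_j^\pm$ and check the crossing relations locally) is exactly the intended one, and the overall architecture of your sketch is sound. But two steps are under-argued, and they are precisely where the content of the lemma lives. First, your claim that the mixed relation $m_{b'}^+\,m_{a'}^-=m_a^-\,m_b^+$ ``comes from the same telescoping'' is not right as stated. The all-$+$ and all-$-$ relations do hold by pure cancellation: both sides are paths lying entirely in one closed half-space, where the middle factor $\wv R_j^\pm\wov R_j^\pm$ is null-homotopic rel endpoints, so each side collapses to the same ``diagonal'' path between the two opposite regions at the crossing, with no use of the crossing data. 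The two mixed products do \emph{not} collapse this way: each passes from the upper to the lower half-space through one of the two opposite regions at the vertex, and equating them requires an explicit local homotopy that carries the descending arc across the crossing, i.e.\ a computation in the complement of the two local strands (a free group on their meridians). Which of the two possible mixed identities holds (the one written, or the one with $+$ and $-$ exchanged) is exactly the over/under datum of Figure~\ref{F:cross}, so this homotopy must be exhibited rather than subsumed under telescoping; you flag the bookkeeping risk, but the mechanism you describe for this relation is the wrong one.

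Second, the completeness step cannot literally be a Tietze transformation inside $\pi_1(S^3\setminus L,\oo)$: the proposed new generators $m_e^\pm$ are not elements of that group (they are morphisms between the $M_i$), the half-paths $\wv R_i^\pm$ are not morphisms of $\wb\pi$ (they end at $\oo$, which is not among its objects), and conversely the region loops $\wv R_k$ are not morphisms of $\wb\pi$ either. The clean way to run your argument is in the enlarged groupoid $\pi_1(S^3\setminus L,\{M_i\}\cup\{\oo\})$, presented by the Dehn generators $\wv R_i$ with their crossing relations and $\wv R_\oo=1$, together with the connecting arrows $\wv R_i^\pm$ and the relations $\wv R_i=\wv R_i^+\wov R_i^-$; there you may perform Tietze moves to reach the generators $m_e^\pm$ and relations \eqref{eq:m-cross}, and finally restrict to the full subgroupoid on $\{M_i\}$. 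That last restriction is not free of content: one must check the $m_e^\pm$ generate $\wb\pi$, which uses the triviality of $\wv R_\oo$ (equivalently, that $\wov R_\oo^+\wv R_\oo^-$ is null-homotopic) to express the loops $\wov R_i^+\wv R_i^-$ in the generators by induction from the unbounded region inward, and one must record how the relation $\wv R_\oo=1$ is absorbed when the object $\oo$ is removed. None of this is difficult, but as written your sketch skips both the group-versus-groupoid bookkeeping and the one genuinely topological verification at a crossing.
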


\begin{proof}[Proof of Proposition \ref{P:BijRepCol}]
  Let $\rho:\pi_1(S^3\setminus L,\oo)\to G$ be a representation on
  $L$.  Then $\rho$ can be extended to a map of groupoids
  $\wb\rho:\wb\pi\to\wb\Gd$ by setting $\wb\rho(\wv R_i^\pm)=\rho(\wv
  R_i)_\pm$.  In particular, we have $\wb\rho(\wv
  R_i)=\rho(R_i)\in\Gd$ and $\wb\rho(m_e)=\wb\rho(\wv
  R_i^+)^{-1}\rho(\wv R_j\wov R_i)\wb\rho(\wv R_i^+)\in\wb\rho(\wv
  R_i^+)^{-1}\,\Gd\,\wb\rho(\wv R_i^+)=\Gd$.  We define a map
  $\rho^*:\ED\to\Gr$ that expresses %so that $\psi\circ \rho^*$ is
  the monodromy around edges: for $e\in\ED$ with meridian $m_e$, let
  $\rho^*(e)=\psi^{-1}(\wb\rho(m_e))$.  Now Lemma \ref{L:GroupoidGen}
  implies
  $$\psi(\rho^*(-e))=\wb\rho(m_{-e})
  =\wb\rho(m_{-e}^{+})\wb\rho(m_{-e}^{-})^{-1}
  =\wb\rho(m_{e}^{+})^{-1}\wb\rho(m_{e}^{-})=\psi(\rho^*(e)^{-1}).$$
  Moreover, at a vertex $v$ as shown in Figure \ref{F:cross}, the two
  first lines of Equation \eqref{eq:m-cross} imply
  $\vp_+(\rho^*(b'a'))=\vp_+(\rho^*(ab))$ and
  $\vp_-(\rho^*(b'a'))=\vp_-(\rho^*(ab))$.  But clearly the map
  $\vp_+\times\vp_-:\Gr\to\wb\Gd\times\wb\Gd$ is injective as $\psi$
  -- which is bijective -- factors through this map.  Therefore, we
  have $\rho^*(b'a')=\rho^*(ab)$.  The last line of Equation
  \eqref{eq:m-cross} shows
  $\vp_+(\rho^*(b'))\vp_-(\rho^*(a'))=\vp_-(\rho^*(a))\vp_+(\rho^*(b))$.
  Thus, $\rho^*$ is a $\Gr$-coloring of $D$.

  Furthermore, if $\rho$ is a $\Yd$-admissible representation then
  since $\Yd$ is a union of conjugacy classes in $\wb\Gd$ we have
  $\wb\rho(m_e)=\wb\rho(\wv R_i^+)^{-1}\rho(\wv R_j\wov
  R_i)\wb\rho(\wv R_i^+)\in\wb\rho(\wv R_i^+)^{-1}\,\Yd\,\wb\rho(\wv
  R_i^+)\subset \Yd$.  Since $\Y=\psi^{-1}(\Yd)$ we have $\rho^*$ is a
  $\Gr$-coloring with values in $\Y$.

  Conversely, suppose that $\rho^*$ is a $\Gr$-coloring of $D$.  The
  definition of a $\Gr$-coloring implies that the map
  $\wb\rho:\wb\pi\to\wb\Gd$ defined by
  $\wb\rho(m_e^{\pm})=\vp_\pm(\rho^*(e))$ satisfy the relations of
  Equation \eqref{eq:m-cross} and thus defines a well defined groupoid
  morphism.  The morphism $\wb \rho$ is $\Gd$-valued on meridians and
  their products.  Therefore, if $R_i$ and $R_j$ are adjacent regions,
  then the loop $\wv R_j\wov R_i$ is conjugated to a meridian and
  since $\Gd$ is a normal subgroup of $\wb \Gd$ we have $\wb\rho(\wv
  R_j\wov R_i)\in\Gd$.  As these loops generate $\pi_1(S^3\setminus
  L,\oo)$, we get that the restriction of $\wb\rho$ to
  $\pi_1(S^3\setminus L,\oo)$ takes values in $\Gd$.  Finally, if
  $\rho^*$ is a $\Gr$-coloring with values in $\Y$, then the
  restriction of $\wb\rho$ to $\pi_1(S^3\setminus L,\oo)$ takes values
  in $\Yd$, since $\Yd$ is a union of conjugacy classes in $\wb\Gd$.
\renewcommand{\qedsymbol}{\fbox{\ref{P:BijRepCol}}}
\end{proof}\renewcommand{\qedsymbol}{\fbox{\thetheo}}

\begin{proof}[Proof of Proposition \ref{P:braid}]
  Let $x,y\in\Gr$.  Let $(x',y')\in\Gr$ be the unique tuple which is a
  solution to the system of equations
  \begin{equation}\label{eq:p-cross2} y'\,x'=x\,y \text{ and }
    \vp_+(y')\vp_-(x')=\vp_-(x)\vp_+(y)
  \end{equation}
  (note these equations are exactly part of the requirements of a
  $\Gr$-coloring, see Equations \eqref{eq:p-cross}). These equations
  are equivalent to the equation: $\RY(x,y)=(x',y')$. So the map $\RY$
  is determined by Equations \eqref{eq:p-cross2}.  Using this, for
  $x,y,z\in\Gr$, we have
  $$(x',y',z')=\RY_{23}\RY_{13}\RY_{12}(x,y,z)  \iff
  \left\{\begin{array}{l}
      z'y'x'=xyz\\
      \vp_+(z')\vp_-(y'x')=\vp_-(xy)\vp_+(z)\\
      \vp_+(z'y')\vp_-(x')=\vp_-(x)\vp_+(yz)
    \end{array}
  \right.
  $$
  $$\iff(x',y',z')=\RY_{12}\RY_{13}\RY_{23}(x,y,z)$$

  \begin{comment}
  $$(x',y',z')=\RY_{23}\RY_{13}\RY_{12}(x,y,z)$$
  $$
  \iff
  \left\{\begin{array}{l}
      (x_1,y_1)=\RY(x,y)\\(x',z_1)=\RY(x_1,z)\\(y',z')=\RY(y_1,z_1)
    \end{array}
  \right.
  $$
  % 
  $$\iff
  \left\{\begin{array}{l}
      y_1x_1=xy,\, \vp_+(y_1)\vp_-(x_1)=\vp_-(x)\vp_+(y)\\
      z_1x'=x_1z,\, \vp_+(z_1)\vp_-(x')=\vp_-(x_1)\vp_+(z)\\
      z'y'=y_1z_1,\, \vp_+(z')\vp_-(y')=\vp_-(y_1)\vp_+(z_1)
    \end{array}
  \right.
  $$
  % 
  $$\iff
  \left\{\begin{array}{l}
      z'y'x'=y_1z_1x'=y_1x_1z=xyz\\
      \vp_+(z')\vp_-(y')\vp_-(x')=\vp_-(y_1)\vp_+(z_1)\vp_-(x')=
      \vp_-(y_1)\vp_-(x_1)\vp_+(z)
      =\vp_-(x)\vp_-(y)\vp_+(z)\\
      \vp_+(z')\vp_+(y')\vp_-(x')=\vp_+(y_1)\vp_+(z_1)\vp_-(x')=
      \vp_+(y_1)\vp_-(x_1)\vp_+(z)
      =\vp_-(x)\vp_+(y)\vp_+(z)
    \end{array}
  \right.
  $$
  \end{comment}
  % 
Finally, since $\psi(\xl(x,y))$ is conjugated to $\psi(x)$ and
  $\psi(\xr(x,y))$ is conjugated to $\psi(y)$ we have that
  $\RY(\Y\times\Y)= \Y\times\Y$. 
\renewcommand{\qedsymbol}{\fbox{\ref{P:braid}}}
\end{proof}\renewcommand{\qedsymbol}{\fbox{\thetheo}}

\begin{proof}[Proof of Proposition \ref{P:braid2}]
  Let $\sigma\in B_n$ be a braid whose braid closure is isotopic to
  $L$.  Represent the closure of $\sigma$ by a planar diagram $D$
  encoded by the braid word corresponding to $\sigma$ in the
  generators $\{\sigma_i\}_{i=1\cdots n-1}$ (see Figure \ref{F:BD}).
\begin{figure}
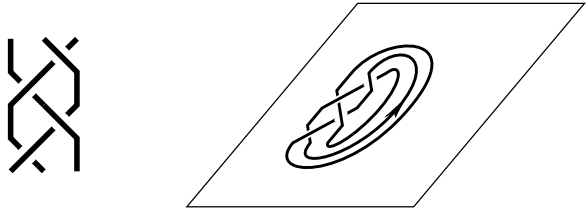

 \begin{minipage}[b]{.2\linewidth}
  \centering
    $\epsh{fig3}{12ex}$
 \end{minipage} %\hfill
 \begin{minipage}[b]{.4\linewidth}
   \centering $\epsh{fig5}{18ex}$
 \end{minipage}
 \caption{Diagrams of the braid word
   $\sigma_2^{-1}\sigma_1\sigma_2^{-1}\sigma_1$ and of its braid
   closure which is a regular projection of the figure eight
   knot.}\label{F:BD}
\end{figure}
Proposition \ref{P:BijRepCol} gives a one-to-one correspondence
between $\Gd$-valued representations on $L$ and $\Gr$-colorings of
$D$.  So to prove the proposition it suffices to show that the
$\Gr$-colorings of $D$ are in one-to-one correspondence with
$\Gr$-coloring of $\sigma$.

Let $\y=(y_1,\ldots,y_n)$ be a $\Gr$-coloring of $\sigma$,
i.e. $\sigma_\RY(\y)=\y$.  Then we color the edges of $D$ at the
bottom of the diagram by $y_1,\ldots,y_n$ and use $\RY$ and its
inverse to propagate these colors to the top of the diagram.  The
result is a $\Gr$-coloring of $D$.  Conversely, for any $\Gr$-coloring
of the diagram $D$, one obtains a $\Gr$-coloring of $\sigma$.  These
are clearly inverse correspondences.
 
\renewcommand{\qedsymbol}{\fbox{\ref{P:braid2}}}
\end{proof}\renewcommand{\qedsymbol}{\fbox{\thetheo}}

\section{$\Gr$-Markov traces}\label{GMarkovTraces}
In this section we give the notion of $\Gr$-Markov moves and coloring systems.  
We fix in this section a factorized group $(\Gd, \wb \Gd, \Gr,
\vp_+,\vp_-)$, $\Yd\subset\Gd$ a union of conjugacy classes in $\wb\Gd$ and
$\Y=\psi^{-1}(\Yd)$.
Recall the following well known theorem  of Markov.
\begin{theo}[Markov] \label{T:Markov}
Any unframed oriented link
in $S^3$ can be represented as the closure of a braid.  Furthermore,
any two braids whose closures are ambient isotopic links are related
by a finite sequence of the following Markov moves:
  $$
  \begin{array}{ccc}
    \sigma\sigma'&\longleftrightarrow& \sigma'\sigma\\
    \sigma&\longleftrightarrow&\sigma_{n}^{\pm1}\sigma
  \end{array}
  $$
  where $n\in\N^*$, $\sigma,\sigma'\in B_n$ and $\sigma_n$ is the
  $n$\textsuperscript{th} generator of $B_{n+1}$.
\end{theo}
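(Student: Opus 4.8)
The plan is to prove the two assertions of the theorem in turn. \emph{Existence of a braid presentation} (Alexander's theorem): fix an oriented diagram $D$ of the link $L$ and form its Seifert circles by smoothing every crossing compatibly with the orientations. If these circles are coherently nested --- isotopic in the plane to concentric circles all carrying the same orientation, with the crossings of $D$ confined between consecutive annuli --- then $D$ already exhibits $L$ as the closure of a braid whose axis is the common centre. In general one runs the braiding algorithm of Alexander (or its streamlined version due to Yamada and Vogel): as long as the diagram is not coherently nested there is an incoherent pair of Seifert circles, and a suitable Reidemeister~II move dragging an arc across an adjacent region strictly decreases a combinatorial complexity (for instance the number of incoherent pairs); the process terminates, producing a coherently nested diagram, hence a closed braid.

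\emph{The moves preserve the closure.} The closure of a braid is unchanged by a cyclic permutation of its word, since one may slide the trailing block around the braid axis; this is the content of $\sigma\sigma'\leftrightarrow\sigma'\sigma$, and specialising $\sigma'$ to a single generator gives invariance under arbitrary conjugation. For $\sigma\leftrightarrow\sigma_n^{\pm1}\sigma$ with $\sigma\in B_n\subset B_{n+1}$, the extra factor $\sigma_n^{\pm1}$ together with the closing arc of the $(n{+}1)$-st strand forms a curl removed by a Reidemeister~I move, so the closure is unchanged as an unframed oriented link.

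\emph{Completeness of the move calculus.} Suppose $\wh\beta_1$ and $\wh\beta_2$ are ambient isotopic, with $\beta_i\in B_{n_i}$. Decompose the ambient isotopy of $S^3$ into elementary steps so that the corresponding diagrams form a finite sequence $D_0=\wh\beta_1,D_1,\dots,D_k=\wh\beta_2$ in which consecutive diagrams differ by a single Reidemeister move or a planar isotopy. Braiding each $D_i$ by the algorithm above gives braids $\gamma_i$, and it suffices to show that if two closed braid diagrams differ by one Reidemeister move, then their associated braids are Markov equivalent; chaining this along $D_0,\dots,D_k$ then relates $\beta_1$ to $\beta_2$. To organise the case analysis I would introduce the $L$-moves of Lambropoulou--Rourke --- cut a strand of the braid, pull one of the two free ends entirely over (respectively entirely under) the whole braid, and re-close, adding one strand --- and prove: (a) braiding a Reidemeister move changes the resulting braid only by braid isotopy and $L$-moves; (b) every $L$-move is a composition of Markov stabilisations and conjugations. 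Combining (a) and (b) gives the Markov equivalence of $\beta_1$ and $\beta_2$.

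\emph{Where the difficulty lies.} The heart of the argument is step (a), and historically this is precisely the point that Markov's original proof left incomplete. One must verify, separately for Reidemeister moves of types~I, II and III and case by case according to how the move and the chosen braiding point sit relative to the braid axis (and to the coherence of the arcs involved), that braiding before and after the move produces braids related only by the permitted moves. A secondary but real point is that the theorem concerns \emph{unframed} links, so writhe is not an invariant: a single positive stabilisation and a single negative one have isotopic --- though not isotopically framed --- closures, which is why stabilisation must be allowed with either sign, as in the statement.
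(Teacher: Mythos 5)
The paper itself offers no proof of this statement: it is quoted as the classical Alexander--Markov theorem and used as a black box (its only role is as input to Proposition \ref{P:G-Markov}), so there is no argument of the authors to compare yours against. Judged on its own, your outline follows the standard modern route --- Alexander/Yamada--Vogel braiding of a diagram for existence, the easy verification that conjugation and stabilisation do not change the closure, and then the Lambropoulou--Rourke $L$-move machinery for completeness --- and that strategy is sound and is essentially how the result is proved in the literature (Birman, Morton, Traczyk, Lambropoulou--Rourke).

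However, as written the proposal is a plan rather than a proof, and the gap sits exactly where you yourself locate it. Step (a) --- that applying the braiding algorithm before and after a single Reidemeister move (in any of the finitely many positions of the move relative to the braid axis and to the coherence of the arcs involved) produces braids related by braid isotopy and $L$-moves --- is only announced (``I would \dots prove''), with none of the case analysis carried out; step (b), that each $L$-move is a composite of conjugations and stabilisations, is likewise asserted without argument (it is easier, but still needs the explicit word-level computation pulling the new strand over or under the braid). Since this verification is precisely the mathematical content of Markov's theorem, deferring it means the completeness half of the statement is not actually established. A secondary caveat: in the existence step, the monotone quantity that Vogel's algorithm decreases must be chosen with care (his argument tracks suitable ``defect'' faces of the Seifert picture, and the na\"{\i}ve count of incoherent pairs of Seifert circles is not obviously strictly decreasing under the move), so that parenthetical would need to be replaced by the correct complexity or a citation.
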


The following proposition is a $\Gd$-link version of the previous
theorem.
\begin{prop}\label{P:G-Markov}
  Let $\shift:\Gr \to \Gr$ be the map defined by
  $\shift(x)=\psi^{-1}\big(\vp_-(x)^{-1}\psi(x)\vp_-(x)\big)$.  Any
  unframed oriented $\Gd$-link
  in $S^3$ can be represented as the closure of a $\Gr$-colored braid.
  Furthermore, any two $\Gr$-colored braids whose closures are ambient
  isotopic $\Gd$-links are related by a finite sequence of the
  following $\Gr$-Markov moves:
  $$
  \begin{array}{ccc}
    (\sigma\sigma',\y)&\longleftrightarrow&(\sigma'\sigma,\sigma_\RY(\y))\\
    (\sigma,\y)&\longleftrightarrow&
    (\sigma_{n}^{\pm1}\sigma,(y_1,\ldots,y_n,\shift^{\pm1}(y_n)))
  \end{array}
  $$
  where $\y=(y_1,\ldots,y_n)$.
\end{prop}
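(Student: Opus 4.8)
The strategy is to reduce everything to the classical Markov Theorem \ref{T:Markov} by keeping track of colorings. First I would establish the representability statement: given a $\Gd$-link $(L,\rho)$, Theorem \ref{T:Markov} gives a braid $\sigma$ whose closure is $L$, and Proposition \ref{P:braid2} then transports $\rho$ to a $\Y$-coloring (hence $\Gr$-coloring) $\y$ of $\sigma$, with $\rho$ $\Yd$-admissible iff $\y$ takes values in $\Y$. So $(L,\rho)$ is the closure of the $\Gr$-colored braid $(\sigma,\y)$. The core of the proposition is the ``furthermore'' part. Suppose $(\sigma,\y)$ and $(\sigma',\y')$ are two $\Gr$-colored braids (on $n$ and $n'$ strands) whose closures are ambient isotopic $\Gd$-links. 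Forgetting the coloring, Theorem \ref{T:Markov} produces a finite chain of ordinary Markov moves $\sigma = \sigma^{(0)} \to \sigma^{(1)} \to \cdots \to \sigma^{(k)} = \sigma'$. I would show that each move lifts uniquely to a $\Gr$-Markov move of the stated form, and that the lifted chain still begins at $\y$ and ends at $\y'$.

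For the lifting, the key observation is that a $\Gr$-coloring of a braid $\sigma$ is \emph{determined by its value on the bottom strands}: by Proposition \ref{P:braid2} (or directly, by propagating colors through crossings via $\RY$ and $\RY^{-1}$ as in its proof), the space of $\Gr$-colorings of $\sigma$ injects into $\Gr^n$ by reading the bottom edges, and the image is exactly the fixed-point set of $\sigma_\RY$. Likewise, a $\Gr$-coloring of a closure is determined by, and determines, a fixed point of $\sigma_\RY$. Now for the conjugation move $\sigma\sigma' \leftrightarrow \sigma'\sigma$: the two braids have isotopic \emph{closures}, and tracking the edges around the closure shows that if $\y$ colors $\sigma\sigma'$ then the coloring read at the bottom of $\sigma'\sigma$ is obtained by applying $\sigma_\RY$ (the bijection of $\Gr^n$ induced by $\sigma$) to $\y$ — this is precisely the claim $(\sigma\sigma',\y)\leftrightarrow(\sigma'\sigma,\sigma_\RY(\y))$, and it is reversible since $\sigma_\RY$ is a bijection. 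For the stabilization move $\sigma \leftrightarrow \sigma_n^{\pm1}\sigma$, the new strand is added at the right; its meridian in the closure is the meridian of the $n$\textsuperscript{th} edge pushed around the extra kink, and a direct computation with the groupoid relations of Lemma \ref{L:GroupoidGen} (equivalently, with the formula for $\RY$ in Equations \eqref{eq:p-cross2}) shows the new bottom color must be $\shift^{\pm1}(y_n)$, where $\shift(x)=\psi^{-1}(\vp_-(x)^{-1}\psi(x)\vp_-(x))$ arises as the monodromy conjugation picked up going around the positive kink. One must check the colored braid $(\sigma_n^{\pm1}\sigma,(y_1,\ldots,y_n,\shift^{\pm1}(y_n)))$ is genuinely a $\Gr$-coloring, i.e. a fixed point of the enlarged $\sigma_\RY$; this follows because $\shift^{\pm1}(y_n)$ is $\wb\Gd$-conjugate to $y_n$ (so $\Y$-membership is preserved, using that $\Yd$ is a union of conjugacy classes in $\wb\Gd$) and the crossing relations at the single new crossing are satisfied by construction of $\RY$ and $\shift$.

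The main obstacle I expect is the bookkeeping in the stabilization move: one has to verify that the formula $\shift(x)=\psi^{-1}(\vp_-(x)^{-1}\psi(x)\vp_-(x))$ is exactly what the meridian relations force when a positive curl is added, and separately handle the negative curl, checking that $\shift^{-1}$ (not merely ``the other conjugate'') appears — this requires being careful about which of $\vp_+,\vp_-$ governs an over- versus under-strand at the new crossing, matching the conventions fixed in Figure \ref{F:cross} and Equation \eqref{eq:p-cross}. Once the two moves are lifted, the proof concludes: the classical chain from Theorem \ref{T:Markov} lifts termwise, uniqueness of colorings given bottom data guarantees the lifted chain passes through the prescribed colorings at each end, and the endpoints are forced to be $(\sigma,\y)$ and $(\sigma',\y')$ because both project to the given link representation $\rho$ and Proposition \ref{P:braid2} makes that correspondence bijective. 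Hence any two such $\Gr$-colored braids are related by a finite sequence of $\Gr$-Markov moves.
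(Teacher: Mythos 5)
Your proposal is correct and follows essentially the same route as the paper: reduce to the classical Markov theorem, observe that the $\Gr$-coloring is uniquely propagated through each move (conjugation giving $\sigma_\RY(\y)$, stabilization forcing the new color $\shift^{\pm1}(y_n)$ via the crossing relation \eqref{eq:p-cross} at the added kink), and conclude that the lifted chain connects the two given colored braids. The only cosmetic difference is that you phrase the stabilization computation through the groupoid relations/bottom-strand determinacy, while the paper computes directly with $(x,y_n)=\RY^{\pm1}(y_n,x)$, which is the same check.
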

\begin{proof}
  Let $L$ be a $\Gd$-link.  Then by Theorem \ref{T:Markov} the link
  underlying $L$ can be represented as the closure of a braid
  $\sigma$.  Then Proposition \ref{P:braid2} implies that the
  $\Gd$-valued representation on $L$ gives a $\Gr$-coloring on
  $\sigma$.  This proves the first part of the proposition.

  To prove the second part of the proposition we will first prove that
  the $\Gr$-Markov moves preserve the isotopy class of the closure of
  the corresponding braids.  For the first move, take a diagram $D$
  obtained as the closure of the concatenation of a diagram
  representing $\sigma$ and a diagram representing $\sigma'$.  The
  representation on $L$ induces a unique $\Gr$-coloring on $D$, which
  can be used to deduce unique $\Gr$-coloring on both $\sigma\sigma'$
  and $\sigma'\sigma$.  Clearly if the colors of the strands at the
  bottom of $\sigma$ are given by $\y$ then the colors of the strands
  at the top of $\sigma$ (i.e. the bottom of $\sigma'$) are given by
  $\sigma_\RY(\y)$.

  For the second move, take a diagram $D$ obtained as the closure of a
  diagram representing $\sigma$ and consider its $\Gr$-coloring. Then
  a diagram $D'$ for $\sigma'=\sigma_{n}\sigma$ is obtained by doing a
  Reidemeister I move to $D$.  The $\Gr$-coloring of $D'$ is the
  coloring on $D$ except with an additional color $x$ on the loop
  coming from the Reidemeister move.  Now Equation \eqref{eq:p-cross}
  at the new crossing of $D'$ implies that,
  $$(x,y_n)=\RY(y_n,x)\iff \vp_+(y_n)\vp_-(x)=\vp_-(y_n)\vp_+(x)
  \iff$$ $$\psi(x)=\vp_-(y_n)^{-1}\vp_+(y_n)\iff x=\shift(y_n).$$
  Similarly if $\sigma'=\sigma_{n}^{-1}\sigma$, we have
  $(x,y_n)=\RY^{-1}(y_n,x)
  \iff x=\shift^{-1}(y_n)$.  Thus, the two $\Gr$-colorings of the two
  braids in the second move are related as in the proposition.
  Moreover, from what we just proved $\shift^{\pm}(y_n)$ is the unique
  value of $\Gr$ which satisfies the second move.
  
  Finally, let $\sigma$ and $\sigma'$ be two $\Gr$-colored braids
  whose closures are ambient isotopic to the $\Gd$-link $L$.  By
  Theorem \ref{T:Markov} the braids $\sigma$ and $\sigma'$ are related
  by a finite sequence of Markov moves.  Then from what we just proved
  the $\Gr$-coloring is uniquely propagated through these moves by the
  corresponding $\Gr$-Markov move.  Thus, $\sigma$ and $\sigma'$ are
  related by a finite sequence of $\Gr$-Markov moves.
\end{proof} 

Next we give the notion of a coloring system associated to the pair
$(\Y,\RY)$.  Let $\cat$ be a $\FK$-linear tensor category.  Let $I$ be
a set.  Let $\{A^i_y\}_{i\in I, y\in \Y}$ be a family of objects in
$\cat$.  Let $\A=\{A^i_y:\,i\in I,y\in\Y\}$ and $\A^{\otimes
  n}=\{V_1\otimes\cdots \otimes V_n:\,(V_1,\ldots, V_n)\in\A^n\}$.

A family of invertible morphisms 
$$\B_{y,z}^{i,j}:A^i_y\otimes A^j_z\to A^j_{\xr(y,z)}\otimes A^i_{\xl(y,z)}$$
in $\cat$ indexed by $y,z\in\Y$ and $i,j\in I$ form a \emph{holonomy
  braiding} if for all $ x,y,z\in\Y$ and $ i,j,k\in I$ the following
equality holds
 $$
 \bp{\B_{y',z'}^{j,k}\otimes\Id}\bp{\Id\otimes\B_{x',z}^{i,k}}
 \bp{\B_{x,y}^{i,j}\otimes\Id}
 =\bp{\Id\otimes\B_{x'',y''}^{i,j}}\bp{\B_{x,z''}^{i,k}\otimes\Id}
 \bp{\Id\otimes\B_{y,z}^{j,k}}
 $$
 where $ x'=\xl(x,y), y'=\xr(x,y), z'=\xr(x',z), x''=\xl(x,z''),
 y''=\xl(y,z)$ and $z''=\xr(y,z)$.  A family of linear maps
 $\tt=\{\tt_V: \End_\cat(V)\to\FK\} $ indexed by $V\in\A^{\otimes n}$
 and $ n\in \N^*= \{1,2,3,{\ldots}\}$ is a \emph{$\Gr$-Markov trace}
 if for all $n\in\N^*$ and $V,W\in \A^{\otimes n}$ the following hold
  \begin{enumerate}
  \item  $\tt_W(f g)=\tt_V(g f)$ for any $f:V\to W$, $g:W\to V$,
  \item there exists a set of \emph{twists} $\{\theta^i_y\}_{i\in I,
      y\in \Y}$ of invertible elements of $\FK$ such that
    \begin{enumerate}
    \item for $y,y'\in\Y$, if $\psi(y)$ and $\psi(y')$ are conjugate
      in $\wb\Gd$ then $ \theta^i_y=\theta^i_{y'}$ for all $i\in I$,
    \item $\tt_{V\otimes A_y^i\otimes A_{\shift(y)}^i}
      \big((\Id\otimes(\B_{y,\shift(y)}^{i,i})^{\pm1})(h\otimes\Id)\big)
      =(\theta^i_y)^{\pm1}\tt_{V\otimes A_y^i}(h)$ for any
      $h\in\End_\cat(V\otimes A_y^i)$.
    \end{enumerate}  
  \end{enumerate}
  A tuple $(\Y,\RY,\{A^i_y\},\{\B_{y,z}^{i,j}\},\tt)$ is a \emph{trace
    coloring system} if $\{\B_{y,z}^{i,j}\}$ is a holonomy braiding
  and $\tt$ is a $\Gr$-Markov trace.  Given such a tuple we can assign
  an endomorphism to a $\Y$-colored braid representing a
  $\Yd$-admissible $\Gd$-link whose components are colored by elements
  of $I$.  Consider the pair $(\sigma_k,\y)$ where $\sigma_k$ is a
  generator of $B_n$ whose $l^{\text{th}}$ strand is colored by
  $i_l\in I$ and $\y=(y_1,{\ldots},y_n)\in \Y^n$.  Then
  $(\sigma_k,\y)$ induces the homomorphism
  \begin{equation}\label{E:IdCId}
    \Id_{A_{y_1}^{i_1}}\otimes {\cdots}\otimes \Id_{A_{y_{k-1}}^{i_{k-1}}} 
    \otimes \B_{y_k,y_{k+1}}^{i_k,i_{k+1}} \otimes \Id_{A_{y_{k+2}}^{i_{k+2}}} 
    \otimes{\cdots}\otimes  \Id_{A_{y_{n}}^{i_{n}}}.
\end{equation}
Let $B_n^{\Y,I}$ be the set of $\Y$-braids $(\sigma,\y)$ with $n$
strands such that the components of the closure of $\sigma$ are
colored by elements of $I$.  Let $(\sigma,\y)\in B_n^{\Y,I}$ then the
$I$-coloring induces a map from the strands of $\sigma$ to $I$ (say
the $l^{\text{th}}$ strand is colored by $i_l\in I$).  Thus, Equation
\eqref{E:IdCId} induces an endomorphism
$$ 
f_{(\sigma,\y)} : A_{y_1}^{i_1}\otimes A_{y_2}^{i_2}\otimes {\cdots}
\otimes A_{y_n}^{i_n}\to A_{y_1}^{i_1}\otimes A_{y_2}^{i_2}\otimes
{\cdots} \otimes A_{y_n}^{i_n}
$$
where $\y=(y_1,{\ldots},y_n)$.  We write
$\tt(f_{(\sigma,\y)})=\tt_{A_{y_1}^{i_1}\otimes {\cdots} \otimes
  A_{y_n}^{i_n}}(f_{(\sigma,\y)})$.

\begin{theo}\label{T:GlinkInv}  
  Let $(\Y,\RY,\{A^i_y\},\{\B_{y,z}^{i,j}\},\tt)$ be a trace coloring
  system in a $\FK$-linear tensor category.  Let $(L,\rho)$ be a
  $\Yd$-admissible $\Gd$-link whose components are colored with
  elements of $I$.  Let $(\sigma,\y)$ be a $\Y$-colored braid
  representing $L$.  Then
  $$F'(L,\rho)=\tt(f_{(\sigma,\y)})$$
  is independent of the choice of $(\sigma,\y)$ and yields a well
  defined invariant of the $\Gd$-link $(L,\rho)$.
\end{theo}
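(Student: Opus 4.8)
The plan is to show that $\tt(f_{(\sigma,\y)})$ is unchanged under each of the two $\Gr$-Markov moves of Proposition~\ref{P:G-Markov}; since any two $\Y$-colored braids representing the same $\Yd$-admissible $\Gd$-link are related by a finite sequence of such moves, invariance under each move individually suffices. First I would observe that for any $\Y$-colored braid $(\sigma,\y)\in B_n^{\Y,I}$, the assignment $\sigma\mapsto f_{(\sigma,\y)}$ is functorial: writing $\sigma=\sigma_{k_1}^{\pm1}\cdots\sigma_{k_m}^{\pm1}$ as a braid word, $f_{(\sigma,\y)}$ is the composition of the elementary endomorphisms of Equation~\eqref{E:IdCId} (and their inverses), with the $\Y$-coloring propagating from layer to layer exactly by $\RY$ and $\RY^{-1}$; that the result is well defined (independent of the braid word) is precisely the holonomy braiding relation together with the obvious far-commutativity of disjoint $\B$'s. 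So $f_{(\sigma\sigma',\y)} = f_{(\sigma',\sigma_\RY(\y))}\circ f_{(\sigma,\y)}$ as morphisms $A_{y_1}^{i_1}\otimes\cdots\otimes A_{y_n}^{i_n}\to A_{y_1}^{i_1}\otimes\cdots\otimes A_{y_n}^{i_n}$, where the intermediate object is $A_{z_1}^{j_1}\otimes\cdots\otimes A_{z_n}^{j_n}$ with $(z_1,\ldots,z_n)=\sigma_\RY(\y)$ and the $j_l$ the induced $I$-colors.

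For the first Markov move $(\sigma\sigma',\y)\leftrightarrow(\sigma'\sigma,\sigma_\RY(\y))$, I would set $f=f_{(\sigma,\y)}:V\to W$ and $g=f_{(\sigma',\sigma_\RY(\y))}:W\to V$, where $V=A_{y_1}^{i_1}\otimes\cdots\otimes A_{y_n}^{i_n}$ and $W=A_{z_1}^{j_1}\otimes\cdots\otimes A_{z_n}^{j_n}$ with $(z_1,\ldots,z_n)=\sigma_\RY(\y)$. Then $f_{(\sigma\sigma',\y)}=g\circ f$ and $f_{(\sigma'\sigma,\sigma_\RY(\y))}=f\circ g$, so property~(1) of a $\Gr$-Markov trace (the trace property $\tt_W(fg)=\tt_V(gf)$) gives $\tt(f_{(\sigma\sigma',\y)})=\tt(f_{(\sigma'\sigma,\sigma_\RY(\y))})$ immediately. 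For the second move $(\sigma,\y)\leftrightarrow(\sigma_n^{\pm1}\sigma,(y_1,\ldots,y_n,\shift^{\pm1}(y_n)))$, I would note that adding the extra strand on the right with color $\shift^{\pm1}(y_n)$ enlarges the ambient object from $V\otimes A_{y_n}^{i_n}$-type to $(V\otimes A_{y_n}^{i_n})\otimes A_{\shift^{\pm1}(y_n)}^{i_n}$ (the new strand lies on the same link component, hence carries the same $I$-color $i_n$, and its $\Y$-color is forced to be $\shift^{\pm1}(y_n)$ by the computation in the proof of Proposition~\ref{P:G-Markov}). The endomorphism $f_{(\sigma_n^{\pm1}\sigma,\cdot)}$ then factors as $(\Id\otimes(\B_{y_n,\shift(y_n)}^{i_n,i_n})^{\pm1})\circ(f_{(\sigma,\y)}\otimes\Id)$, and property~(2b) with $h=f_{(\sigma,\y)}$ yields $\tt(f_{(\sigma_n^{\pm1}\sigma,\cdot)})=(\theta_{y_n}^{i_n})^{\pm1}\,\tt(f_{(\sigma,\y)})$. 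This is \emph{not} equality of the two traces, so I must account for the $(\theta_{y_n}^{i_n})^{\pm1}$ factor.

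The resolution — and the step I expect to be the main obstacle — is that Proposition~\ref{P:G-Markov} is stated for \emph{unframed} links, whereas $\tt$ and $F'$ are a priori only defined on framed data; the stabilization move $\sigma\leftrightarrow\sigma_n^{\pm1}\sigma$ changes the framing (blackboard framing of the closure) by $\pm1$ on the relevant component, and it is exactly this framing change that the twist $\theta^i_y$ compensates. So the honest statement to prove is that $F'$ is invariant under the first move and transforms by $(\theta_{y_n}^{i_n})^{\pm1}$ under the $\pm$-stabilization; since any two braids with isotopic \emph{framed} closures are related by the first move together with \emph{balanced} stabilizations (one $+$ and one $-$, or positive and negative stabilizations on the same strand cancelling in framing) — equivalently, by Markov's theorem for framed links one drops the stabilization move and the framing is recorded by writhe — the two-sided product of the $\theta$-factors is $\theta_{y_n}^{i_n}\cdot(\theta_{y_n}^{i_n})^{-1}=1$, using property~(2a) to match the twist of the stabilized strand (whose $\psi$-image is conjugate in $\wb\Gd$ to $\psi(y_n)$, since $\shift(y_n)=\psi^{-1}(\vp_-(y_n)^{-1}\psi(y_n)\vp_-(y_n))$ is conjugate to $y_n$) against the twist one removes. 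I would therefore structure the proof as: (i) well-definedness of $f_{(\sigma,\y)}$ and the composition formula; (ii) invariance under the first move via the trace property; (iii) the $\theta^{\pm1}$-scaling under stabilization via property~(2b), reduced to an endomorphism of $V\otimes A_{y_n}^{i_n}$; (iv) using property~(2a) and Proposition~\ref{P:G-Markov} to conclude that the net scaling over any sequence of moves relating two framed-isotopic closures is trivial, hence $F'(L,\rho)$ is well defined. The delicate bookkeeping is entirely in step (iv) — keeping straight which strand is stabilized, that its $I$-color and $\Y$-color are the forced ones, and that the conjugacy hypothesis in (2a) is what lets the $+$ and $-$ twists cancel.
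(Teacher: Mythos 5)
Your proposal is correct and is essentially the paper's own argument: represent the $\Gd$-link by a $\Y$-colored braid, invoke Proposition~\ref{P:G-Markov} to relate any two such presentations by $\Gr$-Markov moves, and check invariance using the two axioms of the $\Gr$-Markov trace (property (1) for conjugation, property (2) for stabilization). The only place you go beyond the paper's brief sketch is the framing bookkeeping of your step (iv) — the paper treats the twist relation as directly yielding invariance for (framed) $\Gd$-links, whereas you make explicit that the $\theta^{\pm1}$ factors cancel component by component because the net signed number of stabilizations on a component equals its change of self-writhe (zero for framed-isotopic closures) and property (2a) forces all twists along one component to coincide — a correct and welcome filling-in of what the paper leaves implicit.
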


\section{Modified right traces on right ideals}\label{S:ModTraces}
There are many examples where the usual categorical trace is
generically zero and invariant of Theorem \ref{T:GlinkInv} is trivial
with this trace.  Here we recall the modified trace construction given
in \cite{GPV}.  These traces can be non-zero when the usual
categorical trace is zero.  Later in the paper we will show that the
modified traces lead to $\Gr$-Markov traces and non-trivial link
invariants.

In this section we recall some properties about traces on ideals, for
more details see \cite{GPV}.  Let $\kk$ be a domain.  A \emph{monoidal
  \kt category} is a strict monoidal category $\cat$ such that its
hom-sets are \kt modules, the composition and monoidal product of
morphisms are \kt bilinear, and $\End_\cat(\unit)$ is a free \kt
module of rank one, where $\unit$ is the unit object.  An object $X$
of a monoidal \kt category $\cat$ is \emph{simple} if $\End_\cat(X)$
is a free \kt module of rank 1.  Equivalently, $X$ is simple if the
\kt homomorphism $\kk \to
\End_\cat(X),\, k   \mapsto  k\, \Id_X$  is an isomorphism. 

Let $\cat$ be a pivotal \kt category (see for example \cite{BW,
  Malt}), with unit object~$\unit$, duality morphisms
\begin{align*}
  & \ev_X \colon X^*\otimes X \to\unit,
  \quad \coev_X\colon \unit  \to X \otimes X^*,\\
  & \tev_X \colon X\otimes X^* \to\unit, \quad \tcoev_X\colon \unit
  \to X^* \otimes X.
\end{align*}
Recall that in $\cat$, the left dual and right dual of a morphism
$f\colon X \to Y$ in $\cat$ coincide:
\begin{align*}
  f^*&= (\ev_Y \otimes \Id_{X^*})(\Id_{Y^*} \otimes f \otimes
  \Id_{X^*}) (\Id_{Y^*}\otimes \coev_X)
  \\
  &= (\Id_{X^*} \otimes \tev_Y)(\Id_{X^*} \otimes f \otimes
  \Id_{Y^*})(\tcoev_X \otimes \Id_{Y^*}) \colon Y^*\to X^*.
\end{align*}

For $X,Y,Z\in\cat$, the \emph{right partial trace} (with respect to
$X$) is the map $\tr_r^X \colon \Hom_\cat(Y \otimes X, Z \otimes X)
\to \Hom_\cat(Y,Z)$ defined, for $g \in \Hom_\cat(Y \otimes X, Z
\otimes X)$ by
$$
\tr_r^X(g)=(\Id_Z \otimes \tev_X)(g \otimes \Id_{X^*})(\Id_Y \otimes \coev_X).
$$

By a \emph{retract} of an object $X$ of a category $\cat$, we mean an
object $U$ of~$\cat$ such that there exists morphisms $p\colon X \to
U$ and $q\colon U \to X$ verifying $pq=\Id_U$.  By a \emph{right}
\emph{ideal} of a monoidal category $\cat$, we mean a class $\ideal
\subset \cat$ such that the following two conditions hold:
\begin{enumerate}
\item If $X\in \ideal$ and $Y\in\cat$ then $X\otimes Y\in \ideal$.
\item Any retract (in $\cat$) of an object of $\ideal$ belongs to $\ideal$.
\end{enumerate}
Given an object $X$ of $\cat$ we can define the right ideal  $ \ideal^r_X$ as follows
\begin{align*}
& \ideal^r_X=\bigl\{ U \in \cat \, \big | \, \text{$U$ is a retract of $X \otimes Z$ for some  $Z\in\cat$}  \bigr \}.
\end{align*}

 A \emph{right trace} on a right ideal  $\ideal$ of $\cat$ is
a family $\t=\{\t_X\colon \End_\cat(X)\rightarrow \kk\}_{X\in\ideal}$
of \kt linear forms
such that
\begin{equation*}\label{defrtrace}
\t_{X\otimes Z}(f)=\t_X(\tr_r^Z(f)) \quad \text{and} \quad \t_V(gh)=\t_U(hg)
\end{equation*}
for any $f\in \End_\cat(X\otimes Z)$,  $g \in \Hom_\cat(U,V)$, and $h \in \Hom_\cat(V,U)$, with $X,U,V\in \ideal$ and $Z\in \cat$.

Since $\cat$ is pivotal, then $\phi=\{\phi_X=(\tev_{X}\otimes\Id_{X^{**}})(\Id_X\otimes\coev_{X^{*}})\colon X\to
X^{**}\}_{X \in \cat}
$
is a monoidal natural isomorphism.   Let $X$ be an object of $\cat$ and  
$t \colon \End_\cat(X)\rightarrow \kk$
a \kt linear map.  We say that $t$ is a  \emph{right ambidextrous trace} on $X$ if
\begin{equation*}
t\bigl(\phi_X^{-1} \bigl(\tr_r^{X}\!(f)\bigr)^* \phi_X\bigr)=t\bigl(\tr_l^{X^*}\!(f)\bigr)
\end{equation*}
for all $g \in \End_\cat(X^* \otimes X)$.

 If $X$ is a simple object of $\cat$, we
denote by $\brk{\,}_X \colon \End_\cat(X) \to \kk$ the inverse of the \kt
linear isomorphism $\kk \to \End_\cat(X)$ defined by $k \mapsto k\, \Id_X$.
We say that an object $X$ of $\cat$ is \emph{right ambi} if it is simple and
the \kt linear form $\brk{\,}_X$ is a right
ambidextrous trace on $X$.

\begin{theo}[\cite{GPV}]\label{T:traceXtraceI}
 If $t$ is a right ambidextrous trace on an object $X$ of $\cat$, then there exists a unique right trace
  $\t$ on $\ideal_X^r$ such that
  $\t_{X}=t$.
\end{theo}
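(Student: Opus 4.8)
The plan is to define $\t$ on each object $U\in\ideal_X^r$ by transporting the given trace $t$ along a chosen retraction $U\xrightarrow{q}X\otimes Z\xrightarrow{p}U$ (with $pq=\Id_U$), setting $\t_U(f)=t\bigl(\tr_r^Z(q\,f\,p)\bigr)$ for $f\in\End_\cat(U)$. First I would check that this is independent of the choice of retraction data $(p,q,Z)$. The standard device is to observe that for two retractions $U\rightleftarrows X\otimes Z$ and $U\rightleftarrows X\otimes Z'$ the difference is controlled by a morphism in $\End_\cat(X\otimes(Z\oplus Z'))$-type manipulations; more precisely, given $p\colon X\otimes Z\to U$, $q\colon U\to X\otimes Z$, $p'\colon X\otimes Z'\to U$, $q'\colon U\to X\otimes Z'$, one writes $t(\tr_r^Z(qfp)) = t(\tr_r^Z(q p' q' f p))$ and uses naturality of the partial trace together with the cyclicity hypothesis on $t$ — here is exactly where \emph{right ambidextrousness} of $t$ enters: the partial-trace cyclicity $t(\tr_r^{Z}(ab))=t(\tr_r^{Z'}(ba))$ for $a\colon X\otimes Z'\to X\otimes Z$, $b\colon X\otimes Z\to X\otimes Z'$ is not automatic from $t$ being merely linear, and its proof reduces, after sliding the $Z,Z'$ legs around via the pivotal structure, to the defining identity $t(\phi_X^{-1}(\tr_r^X(g))^*\phi_X)=t(\tr_l^{X^*}(g))$.

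Once well-definedness is in hand, I would verify the two axioms of a right trace. For the partial-trace property $\t_{U\otimes W}(f)=\t_U(\tr_r^W(f))$: if $U$ is a retract of $X\otimes Z$ via $(p,q)$, then $U\otimes W$ is a retract of $X\otimes(Z\otimes W)$ via $(p\otimes\Id_W,\,q\otimes\Id_W)$, and the claim follows from the functoriality identity $\tr_r^{Z\otimes W}=\tr_r^Z\circ\tr_r^W$ for nested partial traces (a routine diagrammatic fact in a pivotal category) together with the evident compatibility $(q\otimes\Id_W)f(p\otimes\Id_W)\mapsto \tr_r^W$ commuting past the $Z$-leg. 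For the cyclicity property $\t_V(gh)=\t_U(hg)$ with $g\colon U\to V$, $h\colon V\to U$: pick a retraction of $U$ through $X\otimes Z$; then $V$ is simultaneously a retract of $X\otimes Z$ through $(p\, ,\, )$ composed with $g,h$ appropriately, i.e. $V\xrightarrow{\;h\;}U\xrightarrow{\;q\;}X\otimes Z\xrightarrow{\;p\;}U\xrightarrow{\;g\;}V$ has composite $g(pq)h=gh$, and the identity $\t_V(gh)=t(\tr_r^Z(q h (gh) g' \cdots))$ collapses to $\t_U(hg)$ after one application of the cyclicity of $t$ (again the partial-trace version, hence again the ambidextrous hypothesis). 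Finally, taking $U=X$ with the trivial retraction $Z=\unit$ gives $\t_X=t$; and uniqueness is immediate since any right trace $\t'$ on $\ideal_X^r$ with $\t'_X=t$ must satisfy $\t'_U(f)=\t'_{X\otimes Z}(qfp)=\t'_X(\tr_r^Z(qfp))=t(\tr_r^Z(qfp))=\t_U(f)$ by its own two axioms.

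I expect the main obstacle to be the well-definedness step: showing the formula $t(\tr_r^Z(qfp))$ does not depend on the chosen retraction, which is genuinely where the \emph{ambidextrous} condition does the work and where the bookkeeping with the pivotal isomorphisms $\phi_X$, the duals $X^*$, and the left/right partial traces is most delicate. The cleanest route is probably to first prove the auxiliary lemma that for a \emph{right ambidextrous} $t$ on $X$ one has $t(\tr_r^{Z_1}(ab))=t(\tr_r^{Z_2}(ba))$ for all $a\colon X\otimes Z_2\to X\otimes Z_1$ and $b\colon X\otimes Z_1\to X\otimes Z_2$ — equivalently that the assignment $Z\mapsto t\circ\tr_r^Z$ is a "partial trace" in the categorical sense on the ideal generated by $X$ — and then everything above is formal. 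This auxiliary lemma is exactly the content isolated in \cite{GPV}, so in the write-up I would either reproduce its short diagrammatic proof or cite it directly.
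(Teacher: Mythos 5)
Your proposal is correct in outline, but note that the paper itself does not prove this statement: its entire proof is the one-line observation that the theorem is the special case $\mathcal O=\{X\}$ of Theorem 10 of \cite{GPV}. What you have written is therefore a reconstruction of the argument of the cited reference rather than of anything in this paper. As a reconstruction it is faithful: the formula $\t_U(f)=t\bigl(\tr_r^Z(qfp)\bigr)$, the reduction of well-definedness to the auxiliary cyclicity statement $t(\tr_r^{Z_1}(ab))=t(\tr_r^{Z_2}(ba))$ for $a\colon X\otimes Z_2\to X\otimes Z_1$, $b\colon X\otimes Z_1\to X\otimes Z_2$, and the derivation of that statement from the right ambidextrous identity via the pivotal structure are exactly the mechanism of \cite{GPV}; your uniqueness argument is the standard one and is complete as written. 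What the paper's route buys is brevity; what yours buys is an actual proof, at the cost of having to establish the auxiliary lemma, which you correctly identify as the only nontrivial point and defer to \cite{GPV}.

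One step is misstated. In the cyclicity verification you claim that $V$ is exhibited as a retract of $X\otimes Z$ via $V\xrightarrow{h}U\xrightarrow{q}X\otimes Z\xrightarrow{p}U\xrightarrow{g}V$ because the composite is $g(pq)h=gh$; but $gh$ is not $\Id_V$ in general, so this is not a retraction. No such retraction is needed: $V\in\ideal_X^r$ comes with its own retraction data $(p_V,q_V,Z_V)$, and setting $a=q_V\,g\,p_U\colon X\otimes Z_U\to X\otimes Z_V$ and $b=q_U\,h\,p_V\colon X\otimes Z_V\to X\otimes Z_U$ gives $ab=q_V\,gh\,p_V$ and $ba=q_U\,hg\,p_U$, so the auxiliary lemma yields $\t_V(gh)=\t_U(hg)$ directly. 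With that repair, and granting the auxiliary lemma, the plan is complete.
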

\begin{proof}
The theorem is a special case of Theorem 10 in \cite{GPV} where ${\mathcal O}=\{X\}$.
\end{proof}

\section{Example~: Semi-cyclic modules of $\Uxi$}\label{S:Example}

In this section we prove that the so called semi-cyclic modules of
$\Uxi$ can be used to define a trace coloring system.  We do this in
six steps (contained in six subsections): 1) we define several
algebras, 2) consider a completion, 3) use this completion to define a
$R$-matrix, 4) show that this $R$-matrix acts on certain modules
giving a holonomy braiding, 5) define a right trace and 6) combine the
results of the section to define a trace coloring system.  7) is an
explicit computation of the holonomy $R$-matrix at fourth root of
unity.

In this section we consider seven versions of quantized $\slt$:
$$
\begin{array}{ccccccccc}
  &&&&\UqHe&\subset&\wh\UqHe&\subset& U_h(\slt)\\
  &&&&\downarrow&&\downarrow\\
  \Uxi&\subset& \UxiH&\subset&\UxiHe&\subset&\Uw
\end{array}
$$
The character ``$h$'' indicates $U_h(\slt)$ is a $\C[[h]]$-topological
algebra, the character ``$q$'' indicates that the corresponding
algebra is a $\Z[q^{1}, q^{-1}]$ algebra and the character ``$\xi$''
indicates a specialization at a root of unity $\xi\in\C$.  The
character ``$D$'' is for divided power (of the generator $F$) and the
hat suggests a completion.

\subsection{An integral version of $U_q\slt$}

We use the following notation.  
\begin{equation}\label{E:brq!}
  \qn x_q=q^x-q^{-x}\quad \qn
  {x;n}_q!=\small{\qn x_q\qn{x\!-\!1}_q\cdots\qn{x\!-\!n\!+\!1}_q}
  \quad\qn{n}_q!=\qn{n;n}_q!
  \quad\qbin xk_q\!\!\!\!=\!\frac{\qn{x;k}_q!}{\qn k_q!}.
\end{equation}

\subsubsection{Generic $q$}
Let $q=\e^{h/2}\in\C[[h]]$ and $q^x=\e^{xh/2}$.  Let $U_h(\slt)$ be
the Drinfeld-Jimbo quantization of $\slt$ generated over $\C[[h]]$ by
$H,E,F$ with relations:
\begin{align}
[H,E]=&2E, &[H,F]=-&2F, &[E,F]=&\frac{K-K^{-1}}{q-q^{-1}}.
\end{align}
where $K=q^H$.  Here $U_h(\slt)$ has the $h$-adic topology, i.e. the
topology coming from the inverse limit
$\displaystyle{\lim_{+\oo\leftarrow n}}U_h(\slt)/h^nU_h(\slt)$.  As a
vector space $U_h(\slt)$ is isomorphic to $U(\slt)[[h]]$.
\\
The algebra $U_h(\slt)$  is a Hopf algebra where the coproduct, counit and
antipode are defined by
\begin{align*}
  \Delta(E)&= 1\otimes E + E\otimes K, & \Delta(F)&=K^{-1} \otimes F +
  F\otimes 1, &
  \Delta(H)&=H\otimes 1 + 1 \otimes H,\\ 
  \epsilon(E)&= \epsilon(F)=\epsilon(H)=0, &
  S(E)&=-EK^{-1}, & S(F)&=-KF. 
\end{align*}
In addition, $U_h(\slt)$  is a braided Hopf algebra with $R$-matrix 
$$
R_h=q^{H\otimes H/2} \sum_{n=0}^{\infty} \frac{\{1\}^{2n}}{\{n\}!}q^{n(n-1)/2}
E^n\otimes F^n,
$$

Following the ideas of Habiro\footnote{Habiro uses a isomorphic
  version of $U_q\slt$ obtained by sending $E\mapsto F$, $F\mapsto E$,
  $K\mapsto K^{-1}$ and $q\mapsto v$.} in \cite{Ha} we define $\UqHe$
as the $\Z[q,q^{-1}]$-sub-Hopf algebra of $U_h\slt$ generated by
$H,K,K^{-1},E $ and $\F{n}=\dfrac{\qn1^{2n}}{\qn n!}F^n$, $n\in \N$.
Here the $D$ stands in $\UqHe$ for the divided powers of $F$.
\begin{prop}\label{P:PBW}
  As a $\Z[q,q^{-1}]$-algebra $\UqHe$ is generated by $H,K,K^{-1},E,
  \F{n}$, $n\in \N$ with the relations
\begin{align}
  \label{Eq:0.5}  
  [H,E]=&2E, \qquad KE=q^2EK, \qquad [K,H]=0, \quad [H,\F n]=-2n \F{n},\\
  \label{Eq:1}  
  \F m \F n&=\qbin {m+n}n_q \F {m+n},\quad K\F n=q^{-2n}\F n K,
 \\
  \label{Eq:EF}  
  E^m\F{n}&=\sum_{k=0}^{\min(m,n)}\qbin mk_q\F{n-k}\qn{H-m-n+2k;k}_q!E^{m-k}.
  \end{align}
  Moreover, $\UqHe$ has a PBW type basis over $\Z[q,q^{-1}]$ given by
  basis elements of the form $\F{a}K^bH^cE^d$ for $a,c,d\in~\N$ and
  $b\in \Z$.
\end{prop}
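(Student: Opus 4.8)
The plan is to prove this in the standard two-part shape for PBW-type results: first establish that the listed relations hold in $\UqHe$ (so that the abstract algebra $\A$ presented by generators $H,K,K^{-1},E,\F n$ and relations \eqref{Eq:0.5}--\eqref{Eq:EF} surjects onto $\UqHe$), and then show the proposed monomials $\F aK^bH^cE^d$ span $\A$ over $\Z[q,q^{-1}]$ and are sent to a linearly independent set in $\UqHe$; linear independence over $\Z[q,q^{-1}]$ will force the spanning set to be a basis and the surjection $\A\twoheadrightarrow\UqHe$ to be an isomorphism. First I would verify the relations. Relations \eqref{Eq:0.5} and the first relation of \eqref{Eq:1} are either inherited directly from $U_h(\slt)$ or are short induction arguments: $KE=q^2EK$ comes from $[H,E]=2E$ and $K=q^H$, while $\F m\F n = \dfrac{\qn1^{2(m+n)}}{\qn m!\qn n!}F^{m+n} = \qbin{m+n}{n}_q\F{m+n}$ is immediate from the definition $\F n = \dfrac{\qn1^{2n}}{\qn n!}F^n$ together with the identity $\qn{m+n}! = \qbin{m+n}{n}_q \qn m!\qn n!$. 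The commutation relations $[H,\F n]=-2n\F n$ and $K\F n = q^{-2n}\F nK$ follow from $[H,F]=-2F$ by induction.

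The substantive relation to check is \eqref{Eq:EF}, the straightening of $E^m\F n$. I would first derive the $m=1$ case, $E\F n = \F n E + \F{n-1}\qn{H-n+1}_q$ (suitably normalized), by multiplying the classical identity $[E,F^n] = F^{n-1}\dfrac{q^{n-1}K - q^{-(n-1)}K^{-1}}{q-q^{-1}}\cdot\qn n_q$ — or rather its divided-power-friendly reformulation — through by the scalar $\dfrac{\qn1^{2n}}{\qn n!}$, and then checking that the resulting coefficient equals $\qn{H-n}_q$ or $\qn{H-n+1}_q$ (whichever the precise normalization dictates) after expanding $q^{n-1}K-q^{-(n-1)}K^{-1}$ in terms of $K=q^H$. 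The general case is then an induction on $m$: assume the formula for $E^m\F n$, multiply on the left by $E$, push $E$ past each $\F{n-k}$ using the $m=1$ case, and reorganize; the bookkeeping is a $q$-Vandermonde-type identity among the $\qbin mk_q$ and the $\qn{H-m-n+2k;k}_q!$ factors. I expect this inductive step — reconciling the two ways of producing $E^{m+1}\F n$ and confirming that all $q$-binomial coefficients and shifted $q$-factorials match — to be the main obstacle, purely at the level of careful but routine symbolic manipulation; the key input is that $\UqHe\subset U_h(\slt)$ so every identity may be checked in the ambient algebra where everything is well-defined.

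For the PBW claim, spanning is a straightforward consequence of the relations: \eqref{Eq:0.5} and \eqref{Eq:1} let one move all $K^{\pm1}$ and $H$ factors to the middle (collecting powers and absorbing the resulting polynomial shifts in $H$ via $H^c$, $b\in\Z$, $c\in\N$), \eqref{Eq:1} collapses products of the $\F n$ into a single $\F a$, and \eqref{Eq:EF} lets one move every $E$ to the right of every $\F n$; iterating these rewrites on an arbitrary word terminates and leaves a $\Z[q,q^{-1}]$-linear combination of the monomials $\F aK^bH^cE^d$. For linear independence I would pass to $U_h(\slt)$, whose underlying $\C[[h]]$-module is $U(\slt)[[h]]$ with the classical PBW basis $F^aH^cE^d$: the image of $\F aK^bH^cE^d$ is $\dfrac{\qn1^{2a}}{\qn a!}q^{bH}F^aH^cE^d$, and since $\dfrac{\qn1^{2a}}{\qn a!}$ is a nonzero element of $\C[[h]]$ and $q^{bH}=\sum_j \tfrac{(bh/2)^j}{j!}H^j$ is triangular with unit leading term in the $H$-grading, these images are $\C[[h]]$-linearly independent, hence a fortiori $\Z[q,q^{-1}]$-linearly independent. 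This completes the argument: the spanning monomials are independent, so they form a $\Z[q,q^{-1}]$-basis of $\UqHe$, and the surjection from the abstractly presented algebra is an isomorphism, establishing that \eqref{Eq:0.5}--\eqref{Eq:EF} are defining relations.
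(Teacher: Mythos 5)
Your overall route is the same as the paper's, which simply defers to Habiro's Proposition 3.2 in \cite{Ha2}: check that the relations hold in $\UqHe$ (everything being verifiable inside $U_h(\slt)$), use the relations to rewrite any word into the span of the monomials $\F aK^bH^cE^d$, and then conclude both the presentation and the PBW basis from the linear independence of these monomials inside $U_h(\slt)$. Your filling-in of those steps (the $m=1$ case of \eqref{Eq:EF} plus induction on $m$, the straightening/termination argument for spanning, and the standard ``spanning set with independent image is a basis, hence the surjection from the presented algebra is an isomorphism'' bookkeeping) is correct and is exactly what the cited proof does.

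The one under-justified step is the linear-independence argument. After reducing via the triangular decomposition $U(\slt)[[h]]\cong \bigl(U(\mathfrak n_-)\otimes U(\mathfrak h)\otimes U(\mathfrak n_+)\bigr)[[h]]$ and cancelling the nonzero scalars $\qn1_q^{2a}/\qn a_q!$, what remains to show is that the family $\{q^{bH}H^c\}_{b\in\Z,\,c\in\N}$ is linearly independent over $\C[[h]]$ in $\C[H][[h]]$. Your justification ``$q^{bH}$ is triangular with unit leading term in the $H$-grading'' does not do this: for fixed $c$ and distinct $b,b'$, the elements $q^{bH}H^c$ and $q^{b'H}H^c$ have the \emph{same} leading term $H^c$ with coefficient $1$, so triangularity alone cannot separate different powers of $K$ (and reduction mod $h$ identifies all the $K^b$). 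The statement is nevertheless true and the fix is routine: for instance, evaluate $H$ at all integers $m$ (these evaluations jointly detect zero in $\C[H][[h]]$) and use the independence of the exponential-polynomial sequences $m\mapsto q^{bm}m^c$ over the field $\C((h))$, which holds because $q=e^{h/2}$ has infinite multiplicative order there (a Vandermonde-type argument). With that repair your proof is complete and essentially coincides with the paper's (i.e.\ Habiro's) argument.
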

\begin{proof}
  The proof is essentially the same as the proof of Proposition 3.2 in
  \cite{Ha2}: We have that relations \eqref{Eq:0.5}, \eqref{Eq:1} and
  \eqref{Eq:EF} hold in $\UqHe$.  One can prove using only these
  relations that $\UqHe$ is $\Z[q,q^{-1}]$-spanned by the elements
  $\F{a}K^bH^cE^d$ for $a,c,d\in \N$ and $b\in \Z$.  Thus, the
  $\Z[q,q^{-1}]$-algebra is given by these generators and relations.
  Moreover, this proof shows the algebra has the PBW basis as
  described above.
\end{proof}
The Hopf algebra structure of $U_h(\slt)$ induces a Hopf algebra
structure on $\UqHe$, in particular:
\begin{align}
  \label{Eq:3}  
  \Delta(\F n)&=\sum_{k=0}^n q^{k(n-k)}\F{k}K^{k-n}\otimes\F{n-k},\qquad 
  S(\F n)=(-1)^nq^{-n(n-1)}\F nK^{n}.
\end{align}

Remark that the $\F n$ is small in $U_h\slt$ for the $h$-adic topology.

\subsubsection{Specialization of  $q$ to a root of unity}\label{SS:SpecQ}
\newcommand{\f}{\mathsf{f}}
\newcommand{\Qi}{{\mathbb Q(\xi)}}
Fix a positive integer $N$ and let $\xi=e^{\frac{2i\pi}{N}}$ be a
$N^{th}$-root of unity.  Let $r=N/2$ if $N$ is even and $r=N$ if $N$
is odd.  Then $r$ is the smallest positive integer such that $\qn
r_\xi=0$.  If $x\in \C$ then let $\xi^x=\e^{\frac{2ix\pi}{N}}$.  The
number $\xi^{-\frac{r(r-1)}2}$ shows up often in what follows, for
this reason we give it a special notation: let
$\s=\xi^{-\frac{r(r-1)}2}$ which is $1$ if $N$ odd and $i^{1-r}$ if
$N$ is even.
\begin{comment}
Let $x=\xi^\alpha$ then 
$$\qn{\alpha;r}_\xi!=x^{-r}\xi^{-r(r-1)/2}(x^2-1)(x^2-\xi^2)
\cdots(x^2-\xi^{2(r-1)})= x^{-r}\s(x^{2r}-1)=\s\qn{r\alpha}_\xi$$
%
Also for $\ell\in\Z$, we have $\displaystyle{\lim_{q\to\xi}\frac{\qn{\ell
      r}_q}{\qn{r}_q}}=\xi^{r(\ell-1)}\ell$,
$\qn{\alpha+r}_\xi=\xi^r\qn{\alpha}$ and
$$
\qn{r-1}_\xi!=\lim_{\ve\to0}\frac{\qn{r+\ve;r}_\xi!}{\qn{r+\ve}_\xi}
=\lim_{\ve\to0}\s\frac{\qn{r(r+\ve)}_\xi}{\qn{r+\ve}_\xi}
=\lim_{\ve\to0}\s\frac{\qn{r^2+r\ve}_\xi}{\qn{r+\ve}_\xi}
=\lim_{\ve\to0}\s^{-1}\frac{\qn{r\ve}_\xi}{\qn{\ve}_\xi}=\s^{-1}r
$$
\\
%
For $\ell\in\Z$ and $0\le k<r$, we have
$$
\qbin {\ell r+k}k_\xi= \xi^{r\ell k},\quad 
$$
$$
\qbin {(\ell+1) r}r_\xi= 
\qbin {\ell r+r-1}{r-1}_\xi\lim_{q\to\xi}\frac{\qn{(\ell+1) r}_q}{\qn{ r}_q}=
\xi^{r\ell(r-1)}\xi^{r\ell}(\ell+1)=\xi^{r^2\ell}(\ell+1)
$$
\end{comment}

Let $\UxiHe=\UqHe\otimes_{q=\xi}\Qi$ be the specialization of $\UqHe$
at the root of unity $q=\xi$.  The Hopf algebra structure of $\UqHe$
induces a Hopf algebra structure on $\UxiHe$.  Consider the elements
$\f=\F r$ and $F=\frac{\F1}{\qn1}$ of $\UxiHe$.  Then $F^r=\qn r_\xi!
\f/\qn1_\xi^{2r}=0$ in $\UxiHe$.  In $\UxiHe$ the element $K^r$ is
central but $E^r$ does not commute with $H$ and $\f$:
$$[H,E^r]=2rE^r, \;\;\;\;\;
[E,\f]=\F{r-1}\qn{H+1-r}_\xi=
\frac{\s\xi^r\qn1_\xi^{2r-2}}{ r}F^{r-1}\qn{H+1}_\xi, $$
\begin{equation}\label{E:Uwithf}
[E^r,\f]=\qn{H;r}_\xi!=\s\qn{rH}_\xi.
\end{equation}

We will also need two additional algebras.  Let $\Uxi$ be the standard
quantization of $\slt$, i.e. the $\C$-algebra with generators $E, F,
K, K^{-1}$ and the following defining relations:
\begin{equation}\label{E:RelUslt}
  KK^{-1} =K^{-1}K=1,  \,  KEK^{-1} =\xi^2E, \,  KFK^{-1}=\xi^{-2}F,\,
  [E,F] =\frac{K-K^{-1}}{\xi-\xi^{-1}}.
\end{equation}
 Let $\UxiH$ be
the $\C$-algebra given by the generators $E, F, K, K^{-1}, H$,
relations
  \eqref{E:RelUslt}, and the following additional relations:
\begin{align*}
  HK&=KH, & HK^{-1}&=K^{-1}H, & [H,E]&=2E, & [H,F]&=-2F.
\end{align*}
Both $\Uxi$ and $\UxiH$ are sub-Hopf algebras of $\UxiHe$.   

\subsubsection{The category $\catU$ of $\Uxi$-modules}\label{CatOfMod}  
Here we define the category of $\Uxi$-modules which will be used to
define a trace coloring system later in this section.

Let $\ZUo$ be the central sub-algebra of $\Uxi$ generated by the
elements $K^r$ and $E^r$.  Given a $\Uxi$-module $V$ the weight space
corresponding to a \emph{weight} $\kappa\in \C$ is the set of elements
$v\in V$ such that $Kv= \kappa v$.  A $\Uxi$-module $V$ is called a
{\em weight module} if $V$ splits as a direct sum of its weight spaces
and if all elements of $\ZUo$ act diagonally on it.  Let $\catU$ be
the category of finite dimensional weight modules over $\Uxi$.
    
The category $\catU$ has the following simple modules: let $\alpha \in
\C\setminus \Z$ and $\ve\in \C$ then the $r$-dimensional vector space
becomes a simple weight module $W_{\alpha,\ve}$ whose action is
determined by a lowest weight vector $w$ (i.e.  $Fw=0$) such
that $$Kw=q^{\alpha-r+1}w \text{ and } E^rw=\ve w.$$ Since the action
of $E$ is cyclical but the action of $F$ is not, we call the modules
$W_{\alpha,\ve}$ \emph{semi-cyclic}.  Note here we use the middle
weight notation.  For a more complete theory of the representation
theory of $\Uxi$, see \cite{DK} and subsequent papers.

There is a $\Gr$-grading on $\catU$ ($\catU$ fiber over $\Gr$) defined
by the following~: if $g=\mat\kappa\ve\in\Gr$, then
$$\catU_g=\{V\in\catU:F^r,E^r,K^r\text{
  acts by respectively }0,\ve\Id_V,\kappa\Id_V\}.$$ For example the
degree of $W_{\alpha,\ve}$ is given by $\mat{\s^2q^{r\alpha}}{\ve}$.

\subsection{Completion of bigraded bialgebra}
\newcommand{\Ho}{{\mathcal B}} As mentioned above it is the goal of
this section to define a trace coloring system using $\catU$ and the
semi-cyclic modules.  To do this we must define a holonomy braiding
for semi-cyclic modules. To this end, in this subsection we consider a
completion of graded bi-algebras.  In Subsection
\ref{SS:BraidedStructure} we show that this completion leads to a
quasi $R$-matrix in the completion of $(\UxiHe)^{\otimes 2}$.  This is
a general construction because it will be applied to several different
algebras, including the tensor product of certain algebras.  %
 
 Let $\FK$ be a domain.  Let
${\Ho=}\bigoplus_{(w,\ell)\in\Z\times\N} {\Ho}_{w,\ell}$ be a bigraded
$\FK$-algebra with product $\cdot$ and unit $\eta$ such that
\begin{equation}\label{E:gradingunit}
  \eta (\FK)\subset {\Ho}_{0,0},
\end{equation}
\begin{equation}\label{Eq:md}
 {\Ho}_{w,\ell}\cdot{\Ho}_{w',\ell'}\subset 
  \bigoplus_{\ell''=\max(\ell,\ell'-w)}^{\ell+\ell'}{\Ho}_{w+w',\ell''}
\end{equation}
for all $w,w'\in \Z$ and $\ell,\ell'\in \N$.  
If $\Ho$ is a  bialgebra with coproduct $\Delta$ and  counit $\epsilon$, then
we assume that the grading is compatible with the coalgebra maps: 
\begin{equation}\label{E:gradingcounit}
  \epsilon(b)=0, 
  \text{ for all } b\in {\Ho}_{w,\ell}\text{ with }(w,\ell)\neq(0,0),
\end{equation}
\begin{equation}\label{Eq:coprod}
\Delta {\Ho}_{w,\ell}\subset\bigoplus_{  \begin{array}{c}
    w_1+w_2=w\\\ell_1+\ell_2=\ell  \end{array}
}{\Ho}_{w_1,\ell_1}\otimes {\Ho}_{w_2,\ell_2}.
\end{equation}
\begin{comment}
Let ${\Ho}$ be a $\Z$-graded $\FK$-bialgebra (this grading is called
the weight and is compatible with product, coproduct, unit, counit)
with a coalgebra compatible $\N$-grading so that
${\Ho}=\bigoplus_{(w,\ell)\in\Z\times\N} {\Ho}_{w,\ell}$.  The coalgebra
grading means that
$$\Delta {\Ho}_{w,\ell}\subset\bigoplus_{  \begin{array}{c}
    w_1+w_2=w\\\ell_1+\ell_2=\ell  \end{array}
}{\Ho}_{w_1,\ell_1}\otimes {\Ho}_{w_2,\ell_2}$$
We assume that the grading satisfy 
\begin{equation}\label{Eq:md}
  {\Ho}_{w_1,\ell_1}.{\Ho}_{w_2,\ell_2}\subset 
  \bigoplus_{\ell=\max(\ell_1,\ell_2-w_1)}^{\ell_1+\ell_2}{\Ho}_{w_1+w_2,\ell}
\end{equation}
Let say that such a $\Z\times\N$-grading is nice.
\end{comment}
For $w\in \Z$ and $ \ell\in \N$, consider the subspaces
$\FF_{w,\ell}({\Ho})=\bigoplus_{v\le w,k\ge \ell}{\Ho}_{v,k}$.  Then
$$
\FF_{w_1,\ell_1}({\Ho})\cdot\FF_{w_2,\ell_2}({\Ho})\subset
\FF_{w_1+w_2,\max(\ell_1,\ell_2-w_1)}({\Ho}).
$$ 
Define $\wh {\Ho} $ as the direct limit of the inverse limit of the
quotient spaces ${\Ho}/\FF_{w,\ell}({\Ho})$, i.e.
$$\wh {\Ho} =\lim_{w\to+\oo}\lim_{+\oo\leftarrow \ell} {\Ho}/\FF_{w,\ell}({\Ho}).$$
Thus, for each element $\wh u$ of $\wh {\Ho}$ there exists a minimal
$w(\wh u)\in \Z$ such that $\wh u$ is uniquely written as a sum
$\sum_{\ell=0}^{+\oo}u_\ell$ where $u_\ell\in
\bigoplus_{w=-\oo}^{w(\wh u)}{\Ho}_{w,\ell}$.  Equation \eqref{Eq:md}
implies that the
multiplication of ${\Ho}$ extend to $\wh {\Ho}$.  In particular, if
$\wh u',\wh u''\in \wh {\Ho}$ then for each $\ell\in \N$ only finitely
many terms contribute to $\bp{\wh u'\cdot\wh u''}_\ell$ and all terms
of the product $\wh u'\cdot\wh u''$ have $\Z$-degrees smaller that
$w(\wh u')+w(\wh u'')$.

The assignment $\Ho\mapsto \wh {\Ho}$ is functorial.  In particular,
if $\Ho'$ is a $\FK$-bialgebra graded as above then each algebra
morphism $f:{\Ho}\to \Ho'$ preserving the grading induces an algebra
morphism $\wh f:\wh {\Ho}\to\wh \Ho'$.  Also, for $n>0$,
${\Ho}^{\otimes n}$ inherit a $\Z\times\N$-grading from ${\Ho}$ (by
summing the degrees of the factors).  This grading satisfies the
relations given in \eqref{Eq:md}.  Consider the completion
$\displaystyle{\wh{{\Ho}^{\otimes n}}}$.  Then, the functoriality
implies that the coproduct and counit on ${\Ho}$ naturally induces a
coalgebra structure on $\wh{\Ho}$:
$$\wh \Delta:\wh {\Ho}\to\wh{{\Ho}^{\otimes 2}},\;\;\; 
\wh{\epsilon}: \wh {\Ho}\to\wh{\FK}$$
where $\wh{\FK}=\FK$ has the trivial grading.

\begin{prop}
  For $w\in \Z$ and $\ell\in \N$, let ${(\UqHe)}_{w,\ell}$ be the free
  $\Z[q,q^{-1}]$-module with basis
  $$\{\F aK^bH^cE^d : a,c,d\in \N, b\in \Z,
  w=2d-2a \text{ and } \ell=2a\}.$$ Then
  $\UqHe=\bigoplus_{(w,\ell)\in\Z\times\N} {(\UqHe)}_{w,\ell}$ is a
  bigrading on $\UqHe$.  Moreover, this bigrading is preserved in
  $\UxiHe$ after specializing $q=\xi$.  Both of these bigradings
  satisfy relations \eqref{E:gradingunit}--\eqref{Eq:coprod} and so
  there exists bialgebras $ \wh{\UqHe}$ and $\wh{\UxiHe}$.
\end{prop}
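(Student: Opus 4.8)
The plan is to read the bigrading off the PBW basis of Proposition~\ref{P:PBW} and then verify the axioms \eqref{E:gradingunit}--\eqref{Eq:coprod} by explicit computation with the relations \eqref{Eq:0.5}--\eqref{Eq:EF} and the coproduct formulas. Since the monomials $\F{a}K^bH^cE^d$ with $a,c,d\in\N$, $b\in\Z$ form a $\Z[q,q^{-1}]$-basis of $\UqHe$, assigning each of them the bidegree $(2d-2a,2a)$ partitions the basis and hence displays $\UqHe=\bigoplus_{(w,\ell)}(\UqHe)_{w,\ell}$ as a $\Z\times\N$-graded $\Z[q,q^{-1}]$-module, with $(\UqHe)_{w,\ell}=0$ unless $\ell$ is even and $w\ge-\ell$ (which is harmless). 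The choice of bidegree is forced by the fact that, by \eqref{Eq:0.5}, $\F{a}K^bH^cE^d$ is an $\ad_H$-eigenvector of eigenvalue $2d-2a$, so $w$ is the $\ad_H$-weight, while $\ell=2a$ records twice the total divided power of $F$. The unit axiom \eqref{E:gradingunit} is immediate, $1=\F0K^0H^0E^0$ lying in bidegree $(0,0)$.

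For the multiplicativity axiom \eqref{Eq:md} I would multiply two basis monomials $\F{a}K^bH^cE^d$ and $\F{a'}K^{b'}H^{c'}E^{d'}$ and return the result to PBW form. The only relation that moves a divided power of $F$ or a power of $E$ is \eqref{Eq:EF}; applied to the junction $E^d\F{a'}$ it produces a $\Z[q,q^{-1}]$-linear combination of terms $\F{a'-k}\,\qn{H-d-a'+2k;k}_q!\,E^{d-k}$ for $0\le k\le\min(d,a')$, in which the middle factor is a Laurent polynomial in $K$. All the remaining reorderings --- commuting powers of $K$ and $H$ through $\F{\bullet}$ and $E^{\bullet}$ via \eqref{Eq:0.5}--\eqref{Eq:1}, and fusing $\F{a}\F{a'-k}=\qbin{a+a'-k}{a'-k}_q\F{a+a'-k}$ via \eqref{Eq:1} --- introduce only scalars, powers of $K^{\pm1}$, and polynomials in $H$, all of bidegree $(0,0)$. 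Thus the product is a $\Z[q,q^{-1}]$-combination of PBW monomials with $F$-divided-power $a+a'-k$ and $E$-power $d+d'-k$, $0\le k\le\min(d,a')$, each of bidegree $(w+w',\,\ell+\ell'-2k)$, with $w=2d-2a$, $\ell=2a$, $w'=2d'-2a'$, $\ell'=2a'$. The inclusion \eqref{Eq:md} then follows from the elementary identities $a'-\min(d,a')=\max(0,a'-d)$ and $\ell'-w=\ell+2(a'-d)$, which give $\ell+\ell'-2\min(d,a')=\max(\ell,\ell'-w)$, while $k\ge0$ gives the upper bound $\ell+\ell'$.

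For the coalgebra axioms, $\epsilon$ is an algebra map with $\epsilon(E)=\epsilon(H)=0$, $\epsilon(K)=1$, and $\epsilon(\F{a})=0$ for $a\ge1$ (clear already in $U_h(\slt)$), so $\epsilon$ kills every PBW monomial except the $K^b$, which span a subspace of $(\UqHe)_{0,0}$; this is \eqref{E:gradingcounit}. For \eqref{Eq:coprod} I would expand $\Delta(\F{a}K^bH^cE^d)=\Delta(\F{a})\,(K^b\otimes K^b)\,\Delta(H^c)\,\Delta(E^d)$ using \eqref{Eq:3}, $\Delta(K)=K\otimes K$, $\Delta(H)=H\otimes1+1\otimes H$, and the $q$-binomial expansion of $\Delta(E)^d=(1\otimes E+E\otimes K)^d$; after the trivial reorderings, each tensor factor of each resulting term is a single PBW monomial (up to a scalar), and the bidegrees of the two factors add up to $(w,\ell)$, giving \eqref{Eq:coprod}. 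Finally, $\UxiHe=\UqHe\otimes_{q=\xi}\Qi$ is free over $\Qi$ on the same PBW monomials, with structure maps the base changes of those of $\UqHe$ (all coefficients in \eqref{Eq:0.5}--\eqref{Eq:EF} and \eqref{Eq:3} lie in $\Z[q,q^{-1}]$ and hence specialize), so the same decomposition and the same four containments hold over $\Qi$. Applying the construction of the present subsection to $(\Ho,\FK)=(\UqHe,\Z[q,q^{-1}])$ and to $(\UxiHe,\Qi)$ then yields the bialgebras $\wh{\UqHe}$ and $\wh{\UxiHe}$.

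The real work is the multiplicativity axiom \eqref{Eq:md}: one has to pin down exactly how far relation \eqref{Eq:EF} can lower the $\N$-degree and check that the minimum it reaches is precisely $\max(\ell,\ell'-w)$ and never less, since this asymmetric bound is what makes the filtration $\FF_{w,\ell}(\Ho)$, and therefore the completion $\wh{\Ho}$, well behaved. The coproduct computation is longer but routine once the explicit formulas are used.
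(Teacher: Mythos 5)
Your proposal is correct and follows essentially the same route as the paper: the bigrading is read off the PBW basis of Proposition \ref{P:PBW}, the unit, counit and coproduct axioms are immediate from the formulas inherited from $U_h(\slt)$, and the multiplicativity bound \eqref{Eq:md} comes from applying relation \eqref{Eq:EF} to products of PBW monomials. You simply make explicit the bookkeeping (in particular the identity $\ell+\ell'-2\min(d,a')=\max(\ell,\ell'-w)$) that the paper leaves to the reader.
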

\begin{proof}
  Equations \eqref{E:gradingunit} and \eqref{E:gradingcounit} are
  satisfied, since the Hopf algebra structure of $\UxiHe$ is induced
  by $U_h(\slt)$.  Moreover, it is easy to see from the definition of
  the coproduct that Equation \eqref{Eq:coprod} is satisfied.
  Finally, Equation \eqref{Eq:md} follows from using Equation
  \eqref{Eq:EF} applied to the product of two elements of the PBW base
  $\{\F aK^bH^cE^d\}_{a,c,d\in \N, b\in \Z}$ of $\UqHe$.
\end{proof}

To give some feeling for the completions $ \wh{\UqHe}$ and
$\wh{\UxiHe}$ we consider the following example.  The subspace
$\FF_{-6,2n}({\UqHe})$ is linear combinations of elements of the form
$\F a K^bH^cE^d$ where $a\ge n$ and $d\le a-3$.  So
${\UqHe}/\FF_{-6,2n}({\UqHe})$ contains the element
$c_{n-1}\F{n-1}E^{n-4}+c_{n-2}\F{n-2}E^{n-5}+\cdots+ c_{3}\F{3}E^{0}$
where $c_i\in \Z[q,q^{-1}]$.  Also, note that $\F{n}E^{n-3}=0$ in
${\UqHe}/\FF_{-6,2n}({\UqHe})$.  Thus, $\lim_{+\oo\leftarrow \ell}
{\UqHe}/\FF_{-6,\ell}({\UqHe})$ and $\wh {\UqHe}$ contain the element
$\sum_{i=3}^\oo c_i\F{i}E^{i-3}$.  Also remark that for any $\wh u\in
\FF_{0,1}({\UxiHe})$ and any formal power series $g(X)\in\C[[X]]$, the
expression $g(\wh u)$ is a convergent series in $ \wh{\UxiHe}$.

\subsection{R-matrix}\label{SS:BraidedStructure} In this subsection
we show that $\wh{\UqHe}$ admits a category of modules which is braided.
\begin{lem}
  The inclusion $\UqHe\to U_h(\slt)$ extends uniquely to an injective 
  bialgebra morphism
  $$\wh{\UqHe}\rightarrow U_h(\slt).$$
\end{lem}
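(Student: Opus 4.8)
The plan is to produce the map $\wh{\UqHe}\to U_h(\slt)$ by exhibiting a suitable topology on $U_h(\slt)$ with respect to which the filtration defining $\wh{\UqHe}$ is continuous, and then invoke the universal property of the inverse/direct limits. More concretely, first I would recall that as a $\C[[h]]$-module $U_h(\slt)\cong U(\slt)[[h]]$ and that each divided power $\F n = \{1\}^{2n}/\{n\}!\, F^n$ lies in $h^n U_h(\slt)$ (this is the "remark that $\F n$ is small" noted right after the Hopf structure on $\UqHe$). The key point is that the bigrading element $\F a K^b H^c E^d$ with $\ell = 2a$ sits in $h^a U_h(\slt)$, so the subspace $\FF_{w,\ell}(\UqHe) = \bigoplus_{v\le w, k\ge \ell}(\UqHe)_{v,k}$ maps into $h^{\lceil \ell/2\rceil} U_h(\slt)$; in particular $\FF_{w,\ell}$ is sent into $\bigcap$-small pieces as $\ell\to\infty$. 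Hence the composite $\UqHe\to U_h(\slt)\to U_h(\slt)/h^n U_h(\slt)$ kills $\FF_{w,2n}(\UqHe)$, so it factors through $\UqHe/\FF_{w,2n}(\UqHe)$, and passing to the limit over $\ell$ (and noting the $w$-direction poses no obstruction since these are honest submodules and the inclusion is filtered) gives a $\C[[h]]$-linear map $\wh{\UqHe}\to \varprojlim_n U_h(\slt)/h^n U_h(\slt) = U_h(\slt)$, using that $U_h(\slt)$ is $h$-adically complete.

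Next I would check this map is an algebra and coalgebra morphism. Since it is built by taking limits of the restrictions of the original inclusion $\UqHe\hookrightarrow U_h(\slt)$, which is already a bialgebra map, compatibility with product, unit, counit and coproduct is inherited: one checks that the multiplication $\wh{\UqHe}\otimes\wh{\UqHe}\to\wh{\UqHe}$ constructed in the previous subsection (via Equation \eqref{Eq:md}) is compatible with the $h$-adic multiplication on $U_h(\slt)$ term-by-term, and similarly that $\wh\Delta$ matches the coproduct on $U_h(\slt)$ after completing, using \eqref{Eq:coprod}. The continuity/convergence statements needed here are exactly the ones established for $\wh{\Ho}$ in general in the preceding subsection, so this step is essentially bookkeeping. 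Uniqueness is immediate: $\UqHe$ is dense in $\wh{\UqHe}$ (it is the image of $\Ho$ in $\varprojlim\varinjlim \Ho/\FF$), and $U_h(\slt)$ is Hausdorff for the $h$-adic topology, so any continuous extension of the inclusion is determined on $\UqHe$, hence everywhere.

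The substantive step — and the one I expect to be the main obstacle — is injectivity. Surjectivity is not claimed, but one must show nothing in $\wh{\UqHe}$ collapses. The strategy would be to use the PBW basis: an element of $\wh{\UqHe}$ is uniquely a sum $\sum_{\ell\ge 0} u_\ell$ with $u_\ell \in \bigoplus_{w\le w_0}(\UqHe)_{w,\ell}$, i.e. a (suitably bounded) formal series $\sum_{a,c,d,b} c_{a,b,c,d}\, \F a K^b H^c E^d$. Under the map to $U_h(\slt)$, the term with $\ell = 2a$ contributes something in $h^a U(\slt)[[h]]$ whose leading $h$-coefficient is $c_{a,b,c,d}\cdot(\text{nonzero constant})\cdot \frac{F^a}{a!}K^b H^c E^d$ (matching the classical PBW monomial in $U(\slt)$). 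Because the monomials $\frac{F^a}{a!}K^bH^cE^d$ for distinct $(a,b,c,d)$ remain linearly independent in $U(\slt)[[h]]$ and are separated by $h$-adic valuation in a controlled way, I would argue by induction on the minimal $a$ for which some $c_{a,\bullet}\ne 0$ that the image being zero forces all $c_{a,b,c,d}=0$. The delicate part is that infinitely many terms of the same $\ell$ (varying $b\in\Z$, $c,d\in\N$ subject to $d-a = w/2 \le w_0/2$, so $d$ bounded but $b,c$ a priori unbounded) can contribute to a fixed $h$-order, so one must check that no infinite cancellation can occur — this follows because within fixed $\ell=2a$ and fixed total weight $w$, only finitely many monomials occur (the constraints $2d-2a=w$, $0\le c$, $b\in\Z$ still allow infinitely many, so in fact one also filters by $H$-degree $c$ and $K$-power $b$, noting the map to $U_h$ respects these and the classical PBW monomials are genuinely independent). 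Carefully organizing this finiteness-within-each-graded-piece argument, and confirming that the "small $w_0$" bound really does cut things down to a convergent, cancellation-free situation, is where the real work lies.
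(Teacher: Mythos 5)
Your construction of the morphism is exactly the paper's (very short) proof: the whole published argument is that $\F{n}\in h^nU_h(\slt)$, hence $\FF_{w,2n}(\UqHe)\subset h^nU_h(\slt)$, so the series representing an element of $\wh{\UqHe}$ converges in the $h$-adically complete algebra $U_h(\slt)$; the compatibility with the (co)algebra structure, uniqueness by density, and injectivity are left implicit there, so your first two paragraphs reproduce and mildly expand the intended argument.

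The one genuine soft spot is your injectivity discussion. The worry you raise is legitimate, but the cure you propose -- filter by the $H$-degree $c$ and the $K$-power $b$, ``noting the map to $U_h$ respects these'' -- fails as stated: under the inclusion $K^b=q^{bH}=e^{bhH/2}$, so a single $K^b$ smears over all powers of $H$ and all orders of $h$, and there is no grading of $U(\slt)[[h]]$ by $K$-power that the map preserves. Likewise the minimal-$a$ induction on ``leading $h$-coefficients'' is delicate, because a nonzero coefficient $p(q)\in\Z[q,q^{-1}]$ may vanish at $q=e^{h/2}$, $h=0$, to arbitrarily high finite order, so the lowest layer need not dominate at order $h^{a}$. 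The clean argument avoids all of this: each layer $u_\ell$ of $\wh u=\sum_\ell u_\ell$ lies in the free module $\bigoplus_{w\le w(\wh u)}(\UqHe)_{w,\ell}$ and is therefore a \emph{finite} combination of PBW monomials $\F{a}K^bH^cE^d$ with $2a=\ell$; the image of such a monomial in $U_h(\slt)\cong U(\slt)[[h]]$ is a $\C[[h]]$-combination of classical PBW monomials $F^{a}H^{c'}E^{d}$ with the \emph{same} exponent $a$ of $F$ (only the $H$-degrees get mixed, via $e^{bhH/2}$). Since the $\C[[h]]$-submodules of $U(\slt)[[h]]$ spanned by PBW monomials with a fixed power of $F$ are topologically complementary (the projections are $h$-adically continuous), vanishing of the image of $\sum_\ell u_\ell$ forces the image of each $u_\ell$ to vanish separately, and then $u_\ell=0$ because $\UqHe$ is by definition a subalgebra of $U_h(\slt)$. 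With that replacement your proof is complete; the rest (morphism property and uniqueness) is routine bookkeeping, as you say.
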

\begin{proof}
 Since $\F \ell\in h^{\ell}U_h(\slt)$ we have that $\FF_{w,2\ell}(\UqHe)\subset
  h^{\ell}U_h(\slt)$.   Thus, the series corresponding to  $\wh u\in \wh{\UqHe}$ 
  converges to a well defined element  in the $h$-adic topology of $U_h(\slt)$.
\end{proof}
\newcommand{\HH}{{\mathcal H}}
Let $\HH: U_h(\slt)^{\otimes 2}\to U_h(\slt)^{\otimes 2}$ be the automorphism
given by conjugation by $q^{H\otimes H/2}$.  Hence if $x,y\in U_h(\slt)$
satisfy $[H,x]=2mx$ and $[H,y]=2ny$ then 
$$\HH(x\otimes y)=q^{2mn}xK^n\otimes yK^m.$$
Furthermore, with usual notation, we have $(\Id\otimes\Delta)(\HH(x\otimes
y))=\HH_{12}\circ\HH_{13}(x\otimes (\Delta y))$ and
$(\Delta\otimes\Id)(\HH(x\otimes y))=\HH_{13}\circ\HH_{23}((\Delta x)\otimes
y)$. 

Consider the element $\check R=q^{-H\otimes H/2}R_h$ of
$U_h(\slt)^{\hat\otimes 2}$.  The defining properties of the R-matrix
$R_h$ imply the following relations for $\check R$:
\begin{equation}
  \label{eq:Dop}
  \HH\circ\Ad_{\check R}\circ\Delta=\Delta^{op},
\end{equation}
\begin{equation}
  \label{eq:R12}
  (\Id\otimes\Delta)\check R=\HH_{13}^{-1}(\check R_{12})\check R_{13},
\end{equation}
\begin{equation}
  \label{eq:R23}
  (\Delta\otimes\Id)\check R=\HH_{23}^{-1}(\check R_{13})\check R_{23}.
\end{equation}
The automorphism $\HH$ restrict to an automorphism of the algebra
$(\UqHe)^{\otimes 2}$ such that
 $$\HH(\FF_{w,\ell}((\UqHe)^{\otimes
   2}))\subset\FF_{w,\ell}((\UqHe)^{\otimes 2})$$ for all
 $(w,\ell)\in\Z\times\N$.  Hence $\HH$ extends to an automorphism of the
 completion $\wh{(\UqHe)^{\otimes 2}}$.  This automorphism specializes to an
 automorphism of $\wh{(\UxiHe)^{\otimes 2}}$.

\begin{theo}\label{T:3R-matrix} 
Let $\check R_t=\sum_{n=0}^{r-1} q^{n(n-1)/2}E^{n}\otimes \F n$. Then the elements 
  \begin{align*}
    \check R_q=&\sum_{n=0}^\oo q^{n(n-1)/2}E^{n}\otimes \F n
    \in\wh{(\UqHe)^{\otimes 2}}\\
   \check R_\xi=&\check R_t\exp\bp{\s^{-1}E^r\otimes \f}
    =\exp\bp{\s^{-1}E^r\otimes \f}\check R_t\in\wh{(\UxiHe)^{\otimes 2}}
  \end{align*}
satisfy Relations \eqref{eq:Dop}, \eqref{eq:R12} and  \eqref{eq:R23} where $\f=\F r\in \wh{\UxiHe}$.
\end{theo}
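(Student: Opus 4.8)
\emph{Strategy.} I would treat $\check R_q$ and $\check R_\xi$ separately, and in each case deduce \eqref{eq:Dop}, \eqref{eq:R12}, \eqref{eq:R23} by transporting them from an algebra where they are already known, rather than checking them by hand. For $\check R_q$: unwinding $\F n=\frac{\qn1^{2n}}{\qn n!}F^n$, the element $\check R=q^{-H\otimes H/2}R_h$ is literally the series $\sum_{n\ge0}q^{n(n-1)/2}E^n\otimes\F n$, whose $n$-th term lies in bidegree $(0,2n)$; thus $\check R=\check R_q$ in $\wh{(\UqHe)^{\otimes 2}}$, and $\check R_q=1\otimes1+N$ with $N$ supported in positive $\N$-degree, hence invertible (sum $\sum_{k\ge0}(-N)^k$). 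The relations \eqref{eq:Dop}, \eqref{eq:R12}, \eqref{eq:R23} hold for $\check R$ in $U_h(\slt)^{\hat\otimes 2}$, as recorded above. The lemma preceding the theorem extends verbatim to all tensor powers — since $\F\ell\in h^\ell U_h(\slt)$ one has $\FF_{w,2\ell}((\UqHe)^{\otimes n})\subset h^\ell U_h(\slt)^{\hat\otimes n}$ — giving an injective algebra morphism $\wh{(\UqHe)^{\otimes n}}\hookrightarrow U_h(\slt)^{\hat\otimes n}$ that intertwines $\wh\Delta$ with $\Delta$ and the two $\HH$'s (both acting on weight vectors by the same closed formula), hence also $\Ad_{\check R_q}$ with $\Ad_{\check R}$. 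By injectivity the three relations descend to $\wh{(\UqHe)^{\otimes 2}}$.

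\emph{The element $\check R_\xi$.} I would first note that $E^r\otimes\f$ commutes with every $E^n\otimes\F n$ — powers of $E$ commute, and by \eqref{Eq:1} with $\qbin{m+n}{n}_q=\qbin{m+n}{m}_q$ the divided powers $\F m$ commute with one another — so $E^r\otimes\f$ commutes with $\check R_t$ and with $\exp(\s^{-1}E^r\otimes\f)$ (this exponential converges in $\wh{(\UxiHe)^{\otimes 2}}$ since $(E^r\otimes\f)^k$ lies in bidegree $(0,2rk)$), settling the equality of the two displayed forms of $\check R_\xi$. The crux is then the identity $\check R_t\exp(\s^{-1}E^r\otimes\f)=\sum_{n\ge0}\xi^{n(n-1)/2}E^n\otimes\F n$: writing $n=s+rk$ with $0\le s<r$, iterating \eqref{Eq:1} and using the standard evaluations of $q$-binomials at a primitive root of unity yields $\f^k=\xi^{r^2k(k-1)/2}\,k!\,\F{rk}$, hence $\exp(\s^{-1}E^r\otimes\f)=\sum_{k\ge0}\s^{-k}\xi^{r^2k(k-1)/2}E^{rk}\otimes\F{rk}$; multiplying on the left by $\check R_t$ and using $\F s\F{rk}=\qbin{rk+s}{s}_\xi\F{rk+s}$ reduces the claim, after substituting $\s=\xi^{-r(r-1)/2}$ and $\qbin{rk+s}{s}_\xi=\xi^{rks}$, to the elementary identity $\tfrac{(s+rk)(s+rk-1)}{2}=\tfrac{s(s-1)}{2}+srk+\tfrac{r^2k(k-1)}{2}+\tfrac{rk(r-1)}{2}$. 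Therefore $\check R_\xi$ is exactly the image of $\check R_q$ under the specialization map $\wh{(\UqHe)^{\otimes 2}}\to\wh{(\UxiHe)^{\otimes 2}}$ (which exists because the bigrading is preserved at $q=\xi$); this map is an algebra homomorphism intertwining $\wh\Delta$, $\Delta^{op}$ and $\HH$ and sending $\check R_q,\check R_q^{-1}$ to $\check R_\xi,\check R_\xi^{-1}$, hence it intertwines $\Ad_{\check R_q}$ with $\Ad_{\check R_\xi}$, and applying it to the relations just proved for $\check R_q$ gives \eqref{eq:Dop}, \eqref{eq:R12}, \eqref{eq:R23} for $\check R_\xi$.

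\emph{Main obstacle.} I expect the real work to be the bookkeeping in the displayed identity above — tracking all the $q$-binomial and $\s$-factors at the root of unity. A second, more routine point needing care is that $\HH$ and $\Ad$ really do commute with the embedding into $U_h(\slt)$ and with the specialization $q=\xi$; because $q^{H\otimes H/2}$ is not an element of the completed algebras, this has to be verified through the explicit action on weight vectors.
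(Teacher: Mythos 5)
Your proposal is correct and follows essentially the same route as the paper: identify $\check R_q$ with $\check R$ under the (injective) embedding of $\wh{(\UqHe)^{\otimes 2}}$ into $U_h(\slt)^{\hat\otimes 2}$ to get Relations \eqref{eq:Dop}--\eqref{eq:R23}, then push them through the specialization $q=\xi$ and check by a divided-power/$q$-binomial computation at the root of unity that the image of $\check R_q$ is exactly $\check R_t\exp(\s^{-1}E^r\otimes\f)$. Your root-of-unity bookkeeping is just the paper's computation run in the opposite direction (expanding $\exp(\s^{-1}E^r\otimes\f)$ in the $\F{rk}$ rather than expanding $\F{\ell r+k}$ in $\f^\ell F^k$), and your added remarks on invertibility of $\check R_q$ and on $E^r\otimes\f$ commuting with $\check R_t$ fill in points the paper leaves implicit.
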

\begin{proof}
  The image of the element $\check R_q$ in $U_h(\slt)\wh\otimes
  U_h(\slt)$ is equal to $\check R$.  Thus, it follows that $\check
  R_q$ satisfies Relations \eqref{eq:Dop}--\eqref{eq:R23} since
  $\check R$ does.  Moreover, these relations can be specialized at
  $q=\xi$.  In particular, the specialization has the following
  properties:
  $$\qn{x+\ell r}_q\mapsto \xi^{\ell r}\qn{x}_\xi,\quad
  \frac{\qn{r\ell}_q}{\qn{r}_q}\mapsto\ell \xi^{r\ell},\quad \F {\ell
    r+k}\mapsto \frac{\xi^{\frac{\ell(\ell-1)r^2}2+\ell
      rk}\qn1_\xi^{2k}}{\qn k_\xi!\,\ell!}\,\f^\ell\, F^k\text{ for }
  \ell,k\in\N,\,k<r$$ Finally, the image of $\check R_q$ by the
  morphism of bialgebra $\wh{(\UqHe)^{\otimes
      2}}\to\wh{(\UxiHe)^{\otimes 2}}$ is given by
  $$
  \check R_\xi=\sum_{\ell=0}^\oo\sum_{n=\ell
    r+k,\,k=0}^{r-1}\xi^{\frac{k(k-1)+\ell r(\ell r-1)}2+\ell r k}
  \xi^{\frac{\ell(\ell-1)r^2}2+\ell rk} \frac{\qn1_\xi^{2k}}{\qn
    k_\xi!\,\ell!}E^k\otimes F^k(E^r\otimes\f)^\ell,
  $$
  and so
  $$
  \check R_\xi=\check R_t \sum_{\ell=0}^\oo\xi^{\frac{\ell r(\ell
      r-1)+r^2\ell(\ell-1)}2}
  \frac{1}{\ell!}(E^r\otimes\f)^\ell=\check
  R_t\exp(\s^{-1}E^r\otimes\f)
  $$
  since $\xi^{r^2\ell^2}=\xi^{r^2\ell}$ implies $\xi^{\frac{\ell
      r(\ell
      r-1)+r^2\ell(\ell-1)}2}=\xi^{\ell\frac{r(r-1)}2}=\s^{-\ell}$.
  \\
\end{proof}

Recall that in Subsection \ref{SS:SpecQ} we considered the element
$\f=\F r\in \wh{\UxiHe}$.  A $\wh{\UxiHe}$-module $\wh V$ is
$\f$-profinite if $\wh V/\f^\ell \wh V$ is finite dimensional for all
$ \ell\in\N$ and
$$\wh V\simeq\lim_{+\oo\leftarrow\ell}\wh V/\f^\ell \wh V.$$
Let $\wh\cat$ be the category of $\f$-profinite $\wh{\UxiHe}$-modules.  
Then $\wh\cat$ is a monoidal category where the tensor product is given by
 $$\wh V\wh\otimes\wh W=\lim_{+\oo\leftarrow\ell}(\wh V/\f^\ell \wh
V)\otimes(\wh W/\f^\ell \wh W).$$
\begin{theo}\label{T:compBraiding}
  The category $\wh \cat$ is braided with braiding $c_{\wh V,\wh W}:
  \wh V\wh\otimes\wh W\to\wh W\wh\otimes\wh V$ given by
  $$c_{\wh V,\wh W}(v\otimes
  w)=\tau(R_\xi(v\otimes w))$$
   where $R_\xi=\xi^{H\otimes H/2}\check R_\xi$ and $\tau:\wh V\wh\otimes\wh W\to\wh
  W\wh\otimes\wh V$ is the linear flip map that exchange the two factors.
\end{theo}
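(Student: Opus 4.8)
The plan is to verify the three axioms of a braided monoidal category for the candidate morphism $c_{\wh V,\wh W}$, using the $R$-matrix relations from Theorem~\ref{T:3R-matrix} together with the automorphism $\HH$ (conjugation by $\xi^{H\otimes H/2}$). First I would observe that $R_\xi=\xi^{H\otimes H/2}\check R_\xi$ is a genuine element of $\wh{(\UxiHe)^{\otimes2}}$, so that for $\f$-profinite modules $\wh V,\wh W$ the map $v\otimes w\mapsto\tau(R_\xi(v\otimes w))$ is well defined: one checks that $R_\xi$ acts compatibly with the inverse-limit structure defining $\wh V\wh\otimes\wh W$, i.e.\ that $R_\xi$ sends $\f^\ell(\wh V\otimes\wh W)$-type submodules appropriately, which follows because $\f$ is small in the relevant grading and the exponential factor $\exp(\s^{-1}E^r\otimes\f)$ converges. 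The naturality of $c$ in each variable is immediate from $R_\xi$ being a fixed algebra element and the maps in $\wh\cat$ being $\wh{\UxiHe}$-linear.

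Next I would check that $c_{\wh V,\wh W}$ is an isomorphism. For this I would exhibit the inverse: since $\check R_\xi=\check R_t\exp(\s^{-1}E^r\otimes\f)$ with $\check R_t=\sum_{n=0}^{r-1}q^{n(n-1)/2}E^n\otimes\F n$ a ``unipotent'' (finite, lower-triangular in the grading) element and $\exp(\s^{-1}E^r\otimes\f)$ invertible with inverse $\exp(-\s^{-1}E^r\otimes\f)$, the element $\check R_\xi$ is invertible in $\wh{(\UxiHe)^{\otimes2}}$; conjugating by $\xi^{H\otimes H/2}$ preserves invertibility, so $R_\xi$ is invertible and $c_{\wh V,\wh W}^{-1}(w\otimes v)=R_\xi^{-1}(\tau(w\otimes v))$ after composing with the flip. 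Then the two hexagon identities are the heart of the matter: I would translate Relations~\eqref{eq:R12} and \eqref{eq:R23} — which read $(\Id\otimes\Delta)\check R=\HH_{13}^{-1}(\check R_{12})\check R_{13}$ and $(\Delta\otimes\Id)\check R=\HH_{23}^{-1}(\check R_{13})\check R_{23}$ — into the statements $c_{\wh U,\wh V\wh\otimes\wh W}=(\Id\otimes c_{\wh U,\wh W})(c_{\wh U,\wh V}\otimes\Id)$ and its mirror, by reabsorbing the $\HH^{-1}$ factors into the $\xi^{H\otimes H/2}$ prefactors (this is exactly the point of passing from $\check R$ to $R=\xi^{H\otimes H/2}\check R$: the Cartan part of the coproduct is group-like, so $R_{12}R_{13}$-type products recombine correctly once one accounts for how $\Delta$ acts on $\xi^{H\otimes H/2}$). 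I would also need Relation~\eqref{eq:Dop}, $\HH\circ\Ad_{\check R}\circ\Delta=\Delta^{op}$, which gives that $c$ intertwines $\Delta$ and $\Delta^{op}$, i.e.\ that $c_{\wh V,\wh W}$ is a morphism in $\wh\cat$ (a $\wh{\UxiHe}$-module map $\wh V\wh\otimes\wh W\to\wh W\wh\otimes\wh V$) rather than just a linear map.

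The main obstacle I expect is bookkeeping at the level of the completion: one must ensure every infinite sum that appears (in $R_\xi$, in $R_\xi^{-1}$, in the triple products $R_{12}R_{13}R_{23}$, and after applying $\HH$ or $\Delta$) actually converges in the appropriate $\wh{(\UxiHe)^{\otimes k}}$ and, more importantly, that it acts on $\f$-profinite modules compatibly with the defining inverse limits, so that all the identities proved in $\wh{(\UxiHe)^{\otimes k}}$ descend to honest equalities of morphisms in $\wh\cat$. Concretely, I would verify that applying $\Id\otimes\Delta$ or $\Delta\otimes\Id$ to $\check R_\xi$ stays inside $\wh{(\UxiHe)^{\otimes3}}$ — here one uses \eqref{Eq:coprod}, which says $\Delta$ respects the bigrading — and that the grading estimate \eqref{Eq:md} forces the relevant products to have only finitely many contributions in each graded piece. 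Granting the earlier results of the section (especially Theorem~\ref{T:3R-matrix} and the functoriality of the completion), the remaining work is to carry out these three diagram-chases, and the genuinely nontrivial algebraic input — the $R$-matrix relations themselves — has already been established, so the proof reduces to the standard argument that a quasitriangular-type element yields a braiding, adapted to the topological setting of $\wh\cat$.
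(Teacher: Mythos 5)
Your overall strategy --- establish the relations of Theorem~\ref{T:3R-matrix} for $\check R_\xi$ and then transfer them to the operator $R_\xi$ by absorbing the automorphism $\mathcal H$ into the Cartan prefactor --- is the same as the paper's. However, your opening claim that $R_\xi=\xi^{H\otimes H/2}\check R_\xi$ ``is a genuine element of $\wh{(\UxiHe)^{\otimes2}}$'' is false, and the justification you give for well-definedness (convergence of $\exp(\s^{-1}E^r\otimes \mathsf{f})$, smallness of $\mathsf{f}$) addresses the wrong factor. The completion $\wh{(\UxiHe)^{\otimes2}}$ is taken with respect to the bigrading carried by powers of $E$ and $\F{n}$; every power of $H\otimes H$ sits in bidegree $(0,0)$, so the exponential series defining $\xi^{H\otimes H/2}$ does not converge in that completion. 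This is exactly why the paper separates $\check R_\xi$ (which does live in the completion) from the Cartan part and treats $\mathcal H$ as an automorphism rather than as conjugation by an element. The substance of the paper's proof is precisely to show that $\xi^{H\otimes H/2}$ nevertheless acts on $\wh V\wh\otimes\wh W$ for $\mathsf{f}$-profinite modules: since $H\mathsf{f}^\ell=\mathsf{f}^\ell H-2r\ell\,\mathsf{f}^\ell$, the operator $H$ stabilizes $\mathsf{f}^\ell\wh V$ and hence descends to the finite-dimensional quotients $\wh V/\mathsf{f}^\ell\wh V$, where the exponential of the bounded operator $\tfrac{i\pi}{N}H\otimes H$ converges; compatibility with the transition maps then defines the action on the inverse limit. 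Your proposal never supplies this argument, and as written the step ``$R_\xi$ acts because it is an element of the completed algebra'' would fail.

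Once that is repaired, the remainder of your plan is sound and coincides with the paper's: the identity $\xi^{H\otimes H/2}\rho(x)\xi^{-H\otimes H/2}=\rho(\mathcal H(x))$ on any module of $\wh\cat$ converts Relations \eqref{eq:Dop}, \eqref{eq:R12} and \eqref{eq:R23} for $\check R_\xi$ into the statement that $R_\xi$ is an $R$-matrix operator, which is the ``reabsorbing $\mathcal H^{-1}$ into the Cartan prefactor'' step you describe; and your argument for invertibility (the non-identity part of $\check R_t$ is nilpotent in each finite quotient, and the exponential factor has an explicit inverse) is fine.
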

\begin{proof}
  Since $H\f^\ell=\f^\ell H-2r\ell \f^\ell$ we have the action of $H$
  stabilize $\f^\ell \wh V$.  Thus $H$ induces a well defined bounded
  operator on $\wh V/\f^\ell \wh V$.  Therefore,
  $\dfrac{i\pi}NH\otimes H$ is a bounded linear map on the finite
  dimensional $\C$-vector space $(\wh V/\f^\ell \wh V)\otimes(\wh
  W/\f^\ell \wh W)$ and its exponential $\xi^{H\otimes H/2}$ is a
  convergent series.  These operators commute with the natural maps
  $(\wh V/\f^{\ell+1} \wh V)\otimes(\wh W/\f^{\ell+1} \wh W)\to (\wh
  V/\f^\ell \wh V)\otimes(\wh W/\f^\ell \wh W)$ and this implies that
  the action of $\xi^{H\otimes H/2}$ is well defined on $\wh
  V\wh\otimes\wh W$.

  Let $\wh V$ and $\wh W$ be objects in $\wh \cat$.  Let
  $\rho:\wh{\UxiHe}\wh\otimes \wh{\UxiHe} \to\End_{\wh \cat}(\wh
  V\wh\otimes\wh W)$ be the representation map corresponding to the
  module structure of $\wh V\wh\otimes\wh W$.  Then for any
  $x\in\wh{\UxiHe}\wh\otimes \wh{\UxiHe}$, we have $\xi^{H\otimes
    H/2}\rho(x)\xi^{-H\otimes H/2}=\rho(\HH(x))$.  Combining this with
  Equations \eqref{eq:Dop}, \eqref{eq:R12} and \eqref{eq:R23} for
  $\check R_\xi$ implies that $R_\xi$ is an R-matrix operator for the
  modules in $\wh \cat$.  Thus, $\wh \cat$ is braided where the
  braiding given by the action of $\check R_\xi$ composed with the
  flip map.
\end{proof}

\subsection{Holonomy braiding} In this subsection we will show that a
semi-cyclic module can be realized as a submodule of a certain
$\f$-profinite $\wh{\UxiHe}$-module.  Then we show that the braiding
in $\wh \cat$ induces the desired holonomy braiding on such
submodules.

  Let $\E =\C[[\f]]$ be the $\Uw$-module whose action is given by 
$$E=F=0,\quad K=\Id,\quad \f=\f.\Id,\quad H=-2r\f\frac d{d\f}.$$
By a \emph{nilpotent} $\UxiH$-module $V$, we mean a $\UxiH$-module
such that $E^r$ and $F^r$ both act by 0 on $V$, $q^H$ acts as $K$ on
$V$ and $V$ splits as a direct sum of its $H$-weight space.  Let
$\cat^H$ be the category of nilpotent $\UxiH$-modules (for more
details see \cite{GPT}).

Recall that $\Uxi\subset \UxiH\subset\UxiHe\subset\Uw$.  If $V$ is a
nilpotent $\UxiH$-module, let $\E V=\Uw\otimes_{\UxiH}V$ be the
infinite dimensional induced $\Uw$-module.  Proposition \ref{P:PBW}
(i.e. the PBW theorem) implies that $\E V=V[[\f]]$ and $\E V/\f^\ell\E
V\simeq\bigoplus_{n=0}^{\ell-1}\f^nV$.  Therefore, $\E V$ is a module
of $\wh \cat$.  Also, if $\C$ is the trivial nilpotent $\UxiH$-module
then $\E=\E\C$.  Thus, the induction $\Uw\otimes_{\UxiH}\cdot$ gives a
functor $F_\E:\cat^H\to\wh\cat.$ However, this functor is not monoidal
because $\E (V\otimes W)\neq \E V\wh\otimes \E W$ and
$F_\E(\unit)=\E$.

If $V,W$ are nilpotent $\UxiH$-modules then $V\otimes W$ is a
sub-$\UxiH$-module of the $\Uw$-module $\E V\wh\otimes\E W$.  This
space is stable by the multiplication by $R_\xi$ because $E^r=0$ on
$W$ and on this space $R_\xi$ acts as $\xi^{H\otimes H/2}\check R_t$
which is the truncated $R$-matrix used to define a braiding on
nilpotent $\UxiH$-modules (see \cite{GPT}).

\begin{theo} \label{T:NilpTwisted} Let $V$ be a nilpotent
  $\UxiH$-module. Let $W_{\alpha,\ve}$ be the semi-cyclic
  $\Uxi$-module corresponding to $\alpha \in \C\setminus \Z$ and
  $\ve\in \C$ (see Subsection \ref{CatOfMod}).  If $t\in \C$ then the
  subspace $$V^t=\exp(-t\f)V\subset\E V$$ is a finite dimensional
  $\Uxi$-sub-module of $\E V$.  In addition, if $t$ is the complex
  number determined by $t=\frac{-\s\ve}{\qn{r\alpha}}$ then the
  $\Uxi$-modules $(V_\alpha)^t$ and $W_{\alpha,\ve}$ are isomorphic,
  where $V_\alpha$ is the simple nilpotent $\UxiH$-module with a
  lowest weight vector $v_0$ satisfying $Fv_0=0$ and
  $Kv_0=q^{\alpha-r+1}v_0$.
\end{theo}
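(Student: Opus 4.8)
The plan is to carry out the two assertions in turn: first that $V^t = \exp(-t\f)V$ is a finite-dimensional $\Uxi$-submodule of $\E V$, and then that for the specified value of $t$ the module $(V_\alpha)^t$ is isomorphic to the semi-cyclic module $W_{\alpha,\ve}$. For the first part, recall from Proposition \ref{P:PBW} that $\E V = V[[\f]]$, so $\exp(-t\f)V$ is the image of the finite-dimensional space $V$ under an invertible operator in $\wh{\UxiHe}$; hence it is finite-dimensional of dimension $\dim V$. To see it is a $\Uxi$-submodule, I would conjugate each generator of $\Uxi$ by $\exp(t\f)$ and check that $\exp(t\f)\,x\,\exp(-t\f) \in \Uxi$ (or at least preserves $V$) for $x \in \{E, F, K, K^{-1}\}$. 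Here the key computations use the commutation relations from Subsection \ref{SS:SpecQ}: $K$ and $F$ commute with $\f$ (so they are unchanged up to $K\f = \xi^{-2r}\f K$... actually $K\f^\ell = \xi^{-2r\ell}\f^\ell K$, but $\xi^{2r}=1$ so in fact $K$ commutes with $\f$), while $[E,\f]$ is a multiple of $F^{r-1}\qn{H+1}_\xi$ and $[E^r,\f] = \s\qn{rH}_\xi$ from \eqref{E:Uwithf}. Since $F^r = 0$ on $V$, conjugating $E$ by $\exp(t\f)$ produces a finite (indeed length-$r$) series in the $\f$'s with coefficients in $\UxiH$, which still preserves $V \subset V[[\f]]$ modulo the relation $\f^? $ — one must be a little careful, but the upshot is that $\exp(t\f) E \exp(-t\f)$ acts on $V$.

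**Identifying the module.** For the second part, I would take $V = V_\alpha$, the simple nilpotent $\UxiH$-module with lowest weight vector $v_0$, $Fv_0 = 0$, $Kv_0 = q^{\alpha-r+1}v_0$, and set $v_0^t = \exp(-t\f)v_0 \in V^t$. The plan is to show $v_0^t$ is a lowest weight vector for the $\Uxi$-action on $(V_\alpha)^t$, i.e. that $F v_0^t = 0$ and $K v_0^t = q^{\alpha-r+1} v_0^t$, and then to compute $E^r v_0^t$ and show it equals $\ve v_0^t$ for the right relation between $t$ and $\ve$. Since $F$ and $K$ commute with $\f$ (using $\xi^{2r}=1$ for $K$), we get $F v_0^t = \exp(-t\f) F v_0 = 0$ and $K v_0^t = \exp(-t\f) K v_0 = q^{\alpha-r+1} v_0^t$ immediately. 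The heart of the matter is computing $E^r v_0^t = E^r \exp(-t\f) v_0$. Using $[E^r, \f] = \s\qn{rH}_\xi$ (equation \eqref{E:Uwithf}) and the fact that $E^r v_0 = 0$ in the nilpotent module $V_\alpha$, one commutes $E^r$ past $\exp(-t\f)$; because $\qn{rH}_\xi$ is central-ish on weight vectors and $H v_0 = (\alpha - r + 1 - \cdots)$... more precisely $v_0$ has $H$-weight $\alpha - r + 1$ in the middle-weight convention, so $\qn{rH}_\xi v_0 = \qn{r\alpha}_\xi v_0$ up to the shift — this should produce $E^r v_0^t = -t\,\s\,\qn{r\alpha}_\xi\, v_0^t$ (the $\exp$ contributes only its linear term after commutation, since $[E^r,\f]$ is again killed or scalar). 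Setting this equal to $\ve v_0^t$ gives $t = \dfrac{-\s\ve}{\qn{r\alpha}}$, matching the statement. Finally, since $\dim (V_\alpha)^t = \dim V_\alpha = r$ and $(V_\alpha)^t$ has a lowest weight vector with $Fv_0^t = 0$, $Kv_0^t = q^{\alpha-r+1}v_0^t$, $E^r v_0^t = \ve v_0^t$, it satisfies exactly the defining data of $W_{\alpha,\ve}$, and since $\alpha \notin \Z$ that module is simple and so the two are isomorphic.

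**Expected main obstacle.** The routine parts are the finite-dimensionality and the $F, K$ computations. The main obstacle is controlling the conjugation $\exp(t\f)\,E^r\,\exp(-t\f)$ carefully enough: one must verify that the higher-order terms of the exponential really do not contribute to the action on $v_0$ (equivalently, that $[\,[E^r,\f]\,,\f] = 0$ on the relevant space, which follows because $\qn{rH}_\xi$ commutes with $\f$ up to a scalar shift $\qn{rH}_\xi \f = \f \qn{r(H - 2r)}_\xi = \f\,\xi^{-2r^2}\qn{rH}_\xi \cdots$ — and $\xi^{2r^2}$ may not be $1$, so this needs genuine care), and to keep track of the precise weight conventions (``middle weight'') so that the scalar comes out as $\qn{r\alpha}$ and not some shifted variant. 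A secondary subtlety is justifying that all these identities, a priori living in the completion $\wh{\UxiHe}$, descend to honest statements about the finite-dimensional $\Uxi$-modules — but this is exactly the content of the preceding subsections (the module $\E V_\alpha$ is $\f$-profinite and $V_\alpha^t$ is a genuine finite-dimensional subspace), so it should go through cleanly once the algebra is set up.
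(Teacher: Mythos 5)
Your proposal is correct and follows essentially the same route as the paper's proof: conjugate the generators of $\Uxi$ past $\exp(-t\f)$ using Equation \eqref{E:Uwithf} (in particular $E^r\exp(-t\f)=\exp(-t\f)(E^r-\s t\qn{rH}_\xi)$), and then identify $(V_\alpha)^t$ with $W_{\alpha,\ve}$ via the lowest weight vector $\exp(-t\f)v_0$. The two points you flag as delicate both resolve because $\xi^{2r}=1$: this makes $\qn{rH}_\xi$ commute with $\f$ (so only the linear term of the exponential survives the commutator), and the weight-convention shift contributes only a factor $\s^{2}=\pm1$, which is absorbed since $\s^{4}=1$.
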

\begin{proof}
  First, the subspace $V^t$ is finite dimensional with 
  dimension $\dim(V)$.  Also, $V^t$ is a $\Uxi$-module since
  Equations \eqref{E:Uwithf} imply
  $\Uxi\exp(-t\f)=\exp(-t\f)\Uxi\subset\Uw$.  For example,
  $E^r\exp(-t\f)=\exp(-t\f)(E^r-\s t\qn{rH})$.  Moreover, the last
  equation determines the action of $E^r$ on $V^t$:
  $$E^r\exp(-t\f)v=\exp(-t\f)(E^r-\s t\qn{rH})v=-\exp(-t\f)\s t\qn{rH}v$$ 
  for any $v\in V^t$.  So if $V^t=(V_\alpha)^t$ with
  $t=\frac{-\s\ve}{\qn{r\alpha}}$ then $E^rv=\ve v$ for all $v\in
  (V_\alpha)^t$.  Let $v_0$ be the lowest weight vector of $V_\alpha$
  as in the statement of the theorem.  Then $\exp(-t\f)v_0$ is a
  lowest weight vector in $(V_\alpha)^t$ since $F \exp(-t\f)v_0 =
  \exp(-t\f)Fv_0 =0$ and
  $K\exp(-t\f)v_0=\exp(-t\f)Kv_0=q^{\alpha-r+1}\exp(-t\f)v_0$.  Thus,
  $\exp(-t\f)v_0$ generates a $r$-dimensional module which is equal to
  $ (V_\alpha)^t$ and isomorphic to $W_{\alpha,\ve}$.
  \end{proof}
  One should note that if $t\neq0$ then $V^t$ is not a $\UxiH$-module
  since $HV^t\not\subset V^t$ for $t\neq0$.  This fact is an important
  aspect of the existence of the non-trivial holonomy $R$-matrix.  In
  particular, the following lemma shows that the map $\xi^{H\otimes
    H/2}$ ``moves'' the tensor product of two of the
  $\Uxi$-sub-modules of Theorem \ref{T:NilpTwisted}.

\begin{lem}\label{L:ActHtimeH}
  Let $V_1$ and $V_2$ be nilpotent $\UxiH$-modules on which $K^r$ acts
  by $\k_1,\k_2$ respectively.  Then on $V_1\otimes V_2\subset\E
  V_1\otimes\E V_2$, we have
  $$\xi^{H\otimes H/2}(\exp(-t_1\f)\otimes\exp(-t_2\f))
  =(\exp(-t_1\k_2^{-1}\f)\otimes\exp(-t_2\k_1^{-1} \f))\xi^{H\otimes H/2}.$$ In
  particular, the $\Uxi$-module $V_1^{t_1}\otimes V_2^{t_2}$ is sent
  into the $\Uxi$-sub-module $V_1^{\k_2^{-1} t_1}\otimes V_2^{\k_1^{-1}
    t_2}\subset\E V_1\otimes\E V_2$.\\
  Similarly, on $V_1\otimes V_2$, we have
  $$\exp\bp{\s^{-1} E^r\otimes \f}(\exp(-t_1\f)\otimes\exp(-t_2\f))
  =\bp{\exp(-t_1\f)\otimes1}\bp{\exp\bp{-\bp{t_1\qn{rH}+t_2}\otimes\f}}
$$
$$=\exp(-t_1\f)\otimes\exp(-\bp{(\k_1-1/\k_1)t_1+t_2}\f).$$
\end{lem}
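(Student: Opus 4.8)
The plan is to verify the two displayed identities by letting both sides act on an $H$-weight vector and matching scalars, using only the commutation relations from Equation \eqref{E:Uwithf} together with the explicit action of $\f$ and $K^r$ on nilpotent modules. For the first identity, I would start with weight vectors $v_1\in V_1$, $v_2\in V_2$ with $Hv_1 = 2m_1 v_1$, $Hv_2 = 2m_2 v_2$, so that $\xi^{H\otimes H/2}(v_1\otimes v_2) = \xi^{2m_1 m_2}\,v_1\otimes v_2$. Since on $V_i$ we have $\f = \F r$ acting and $H\f^\ell = \f^\ell(H - 2r\ell)$ (the relation $[H,\f]=-2r\f$ coming from $[H,\F n]=-2n\F n$ with $n=r$), the operator $\exp(-t_1\f)\otimes\exp(-t_2\f)$ is a power series in $\f\otimes 1$ and $1\otimes\f$, and applying $\xi^{H\otimes H/2}$ past a factor $\f^a\otimes\f^b$ shifts the $H$-weights by $-2ra$ in the first slot and $-2rb$ in the second. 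Tracking this shift shows $\xi^{H\otimes H/2}$ conjugates $\f\otimes 1$ to $\xi^{-2rm_2}\,\f\otimes 1 = \k_2^{-1}\,\f\otimes 1$ on the weight-$m_2$ part of $V_2$ (using $\xi^{2r} = 1$ is false in general, so rather: $\xi^{H\otimes H/2}$ acting on $\f^a v_1 \otimes v_2$ picks up $\xi^{(2m_1 - 2ra)(2m_2)/2}$, i.e. an extra factor $\xi^{-2ram_2} = \k_2^{-a}$ relative to moving it to the right first), and symmetrically conjugates $1\otimes\f$ to $\k_1^{-1}\,1\otimes\f$. Exponentiating these conjugations gives exactly $\exp(-t_1\k_2^{-1}\f)\otimes\exp(-t_2\k_1^{-1}\f)$ on the left, proving the first display; the ``in particular'' statement is then immediate from the definition $V_i^{t_i} = \exp(-t_i\f)V_i$.

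For the second identity I would proceed similarly but now using the key relation $E^r\exp(-t\f) = \exp(-t\f)(E^r - \s t\qn{rH})$ from the proof of Theorem \ref{T:NilpTwisted} (which in turn is $[E^r,\f] = \qn{H;r}_\xi! = \s\qn{rH}$ exponentiated). Since $E^r$ annihilates $V_2$, the factor $\exp(\s^{-1}E^r\otimes\f)$ acting on $V_1\otimes V_2$ can only see $E^r$ through its commutator with the $\exp(-t_1\f)$ on the first slot: I would expand $\exp(\s^{-1}E^r\otimes\f)(\exp(-t_1\f)\otimes 1)$ and commute $E^r$ past $\exp(-t_1\f)$, producing $\exp(-t_1\f)\otimes 1$ times $\exp(\s^{-1}(E^r - \s t_1\qn{rH})\otimes\f)$, and then note that $E^r\otimes\f$ still kills $V_1\otimes V_2$ (as $E^r = 0$ on $V_2$ forces $E^r\otimes\f$ to vanish — wait, rather $E^r$ acts on the first factor, so I must be careful: $E^r\otimes\f$ applied to $V_1\otimes V_2$ lands in $E^rV_1\otimes\f V_2$; but $E^r = 0$ on the \emph{nilpotent} module $V_1$ as well, so indeed $E^r\otimes\f$ vanishes on $V_1\otimes V_2$). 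Hence only the term $\exp(-t_1\qn{rH}\otimes\f)$ survives, combining with the remaining $\exp(-t_2\otimes\f)$ contribution — here I would also need that $1\otimes\f$ commutes with $\qn{rH}\otimes 1$, which holds since they act on different tensor factors — to give $\exp(-\bp{t_1\qn{rH}+t_2}\otimes\f)$. The final equality $\qn{rH} = \k_1 - \k_1^{-1}$ on $V_1$ follows because $\qn{rH}_\xi = \xi^{rH} - \xi^{-rH}$ and $\xi^{rH} = q^{rH} = (q^H)^r = K^r$ acts by $\k_1$ on $V_1$.

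The main obstacle, and the only place requiring real care rather than bookkeeping, is justifying that all these manipulations are valid in the completed setting: the exponentials $\exp(-t_i\f)$, $\exp(\s^{-1}E^r\otimes\f)$ live in $\wh{\UxiHe}$ or $\wh{(\UxiHe)^{\otimes 2}}$, and $\xi^{H\otimes H/2}$ is only a convergent operator on $\f$-profinite modules as established in the proof of Theorem \ref{T:compBraiding}. I would therefore phrase the whole computation as an identity of operators on the finite-dimensional quotients $V_1\otimes V_2$ (which is genuinely finite dimensional, being a submodule of $\E V_1\wh\otimes\E V_2$ as noted before Theorem \ref{T:NilpTwisted}), where each series terminates because $\f$ is nilpotent modulo any $\f^\ell$ and $V_1\otimes V_2$ sits inside a fixed truncation; on such a space every step above is a finite computation. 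A secondary subtlety is that $\exp(-t\f)$ does not commute with $H$, so one cannot simply say "$\f$ and $H$ commute"; the weight-shift bookkeeping in the first paragraph is precisely what replaces that false statement, and I would write it out as $\xi^{H\otimes H/2}(\f^a\otimes\f^b) = \k_2^{-a}\k_1^{-b}(\f^a\otimes\f^b)\xi^{H\otimes H/2}$ as a lemma-internal identity before summing.
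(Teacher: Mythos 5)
Your proof is correct and follows essentially the same route as the paper's: the first identity via the commutation $\xi^{H\otimes H/2}(\f^n\otimes 1)=(\f\otimes K^{-r})^n\xi^{H\otimes H/2}$ coming from $[H,\f]=-2r\f$, and the second via $[E^r,\f]=\s\qn{rH}$ together with the vanishing of $E^r$ on the nilpotent module $V_1$ (the paper spells out only the first computation and declares the second ``similar''). One trivial aside: your parenthetical claim that $\xi^{2r}=1$ ``is false in general'' is itself mistaken --- since $N=2r$ or $N=r$, one always has $\xi^{2r}=1$ --- but the weight-shift bookkeeping you substitute for it is valid in any case.
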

\begin{proof}
We have
$$\xi^{H\otimes H/2} (\f^n\otimes 1)=(\f^n\otimes 1)\xi^{(H-2rn)\otimes H/2} = (\f^n\otimes 1)(1\otimes K^{-r})^n \xi^{H\otimes H/2}= (\f\otimes  K^{-r})^n \xi^{H\otimes H/2}.$$
Similarly, $\xi^{H\otimes H/2} (1\otimes \f^n)= (K^{-r}\otimes  \f)^n \xi^{H\otimes H/2}.$
Thus, these equalities imply the first relation of the lemma.  A similar argument implies the second relation of the lemma.
\end{proof}
\begin{theo}\label{Th:holonomyR}
  The restriction of $R_\xi$ on $V_1^{t_1}\otimes V_2^{t_2}$ is a map 
  $$V_1^{t_1}\otimes V_2^{t_2}\to V_1^{t'_1}\otimes V_2^{t'_2}$$ 
  where $t'_1=\k_2^{-1}t_1$ and $t'_2=(1-\k_1^{-2})t_1+\k_1^{-1}t_2$.
  In particular, if $a_1=\exp(-t_1\f)v_1 \in V_1^{t_1}$ and $a_2=\exp(-t_2\f)v_2\in V_2^{t_2}$ then 
  \begin{align}\label{E:ActionOfRmatrix}
    R_\xi(a_1\otimes a_2)
    &=(\exp(-t'_1\f)\otimes\exp(-t'_2\f))\xi^{H\otimes H/2}
    \sum_{n=0}^{r-1}\frac{\qn1^{2n}}{\qn n_\xi!}\xi^{n(n-1)/2}
    \Ad_{t_1}(E)^n v_1\otimes F^n v_2
  \end{align}
  where $\Ad_t(x)=\exp(t\f)x\exp(-t\f)$.
\end{theo}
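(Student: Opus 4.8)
The plan is to combine the explicit formula $R_\xi=\xi^{H\otimes H/2}\check R_\xi$ with $\check R_\xi=\check R_t\exp(\s^{-1}E^r\otimes\f)$ from Theorem \ref{T:3R-matrix} and push it through the factor $\exp(-t_1\f)\otimes\exp(-t_2\f)$ using Lemma \ref{L:ActHtimeH}. First I would observe that, by definition, a general element of $V_1^{t_1}\otimes V_2^{t_2}$ is of the form $(\exp(-t_1\f)\otimes\exp(-t_2\f))(v_1\otimes v_2)$ with $v_i\in V_i$, so it suffices to compute $R_\xi(\exp(-t_1\f)\otimes\exp(-t_2\f))$ as an operator. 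Writing $R_\xi(\exp(-t_1\f)\otimes\exp(-t_2\f))=\xi^{H\otimes H/2}\check R_t\exp(\s^{-1}E^r\otimes\f)(\exp(-t_1\f)\otimes\exp(-t_2\f))$, the second displayed identity of Lemma \ref{L:ActHtimeH} rewrites the rightmost three factors as $\check R_t\bigl(\exp(-t_1\f)\otimes\exp(-((\k_1-1/\k_1)t_1+t_2)\f)\bigr)$, since $\check R_t$ commutes past $\exp(\s^{-1}E^r\otimes\f)$ — note $E^r$ already acts by $0$ on $V_2$, wait, more carefully: $\exp(\s^{-1}E^r\otimes\f)$ acts on $V_1\otimes V_2$ but $\check R_t$ only involves $E^n\otimes\F n$ for $n<r$, so they need not commute; instead I would apply Lemma \ref{L:ActHtimeH} first to $\exp(\s^{-1}E^r\otimes\f)(\exp(-t_1\f)\otimes\exp(-t_2\f))$ and only then multiply by $\check R_t$ on the left.

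Next I would move $\check R_t=\sum_{n=0}^{r-1}\xi^{n(n-1)/2}E^n\otimes\F n$ past the resulting $\exp(-t_1\f)\otimes(\text{something})\f$ factor. Here the key point is that $\check R_t$ and $\exp(-t_1\f)\otimes 1$ do \emph{not} commute because $[E,\f]\neq 0$ in $\UxiHe$; this is exactly where $\Ad_{t_1}(E)=\exp(t_1\f)E\exp(-t_1\f)$ enters. Writing $E^n\otimes\F n=(\exp(-t_1\f)\otimes 1)(\Ad_{t_1}(E)^n\otimes\F n)(\exp(t_1\f)\otimes1)$ and noting that the trailing $\exp(t_1\f)\otimes1$ cancels the leading factor of $\exp(-t_1\f)$ already present, I collect the factor $\exp(-t_1\f)\otimes 1$ to the front. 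Then I commute $\xi^{H\otimes H/2}$ to the left past the $\exp(-t_1\f)\otimes\exp(-t_2'\f)$-type factors using the \emph{first} identity of Lemma \ref{L:ActHtimeH}, which produces the rescaling $t_1\mapsto\k_2^{-1}t_1$ and $t_2'\mapsto\k_1^{-1}t_2'$; bookkeeping these two rescalings gives precisely $t_1'=\k_2^{-1}t_1$ and $t_2'=\k_1^{-1}((\k_1-\k_1^{-1})t_1+t_2)=(1-\k_1^{-2})t_1+\k_1^{-1}t_2$.

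Finally, collecting everything, $R_\xi$ acts as $(\exp(-t_1'\f)\otimes\exp(-t_2'\f))\xi^{H\otimes H/2}\sum_{n=0}^{r-1}\xi^{n(n-1)/2}(\Ad_{t_1}(E)^n\otimes\F n)$ on $V_1\otimes V_2$, and since $\F n=\frac{\qn1^{2n}}{\qn n_\xi!}F^n$ acts on the nilpotent module $V_2$ this is exactly formula \eqref{E:ActionOfRmatrix}. Because the target expression is manifestly of the form $(\exp(-t_1'\f)\otimes\exp(-t_2'\f))$ applied to an element of $V_1\otimes V_2$ (the sum lands back in $V_1\otimes V_2$ as $\Ad_{t_1}(E)$ preserves $V_1^{t_1}$, hence $E$ preserves $V_1$, and $\xi^{H\otimes H/2}$ preserves $V_1\otimes V_2$), the image lies in $V_1^{t_1'}\otimes V_2^{t_2'}$, giving the asserted map. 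The main obstacle is the careful ordering of the non-commuting factors $\xi^{H\otimes H/2}$, $\check R_t$, $\exp(\s^{-1}E^r\otimes\f)$ and the twisting exponentials: one must apply the two halves of Lemma \ref{L:ActHtimeH} in the right order and correctly absorb the conjugation by $\exp(t_1\f)$ into $\Ad_{t_1}(E)$, while checking that $\exp(-t_2\f)$'s interaction with $\F n=\frac{\qn1^{2n}}{\qn n_\xi!}F^n$ on $V_2$ contributes nothing extra because $[F,\f]=0$ in $\UxiHe$. Once the commutation relations are tracked correctly, the computation of $t_1'$ and $t_2'$ is a short algebraic simplification.
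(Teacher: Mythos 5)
Your proposal is correct and follows essentially the same route as the paper's proof: factor $R_\xi=\xi^{H\otimes H/2}\check R_t\exp(\s^{-1}E^r\otimes\f)$, apply the second identity of Lemma~\ref{L:ActHtimeH} to absorb $\exp(\s^{-1}E^r\otimes\f)$ into the twisting exponentials, commute $\check R_t$ past $\exp(-t_1\f)\otimes 1$ via $\Ad_{t_1}$, and finish with the first identity of the lemma to get $t_1'$ and $t_2'$. Your mid-argument self-correction about the non-commutation of $\check R_t$ with $\exp(\s^{-1}E^r\otimes\f)$ lands on exactly the ordering the paper uses.
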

\begin{proof}
  Recall $R_\xi=\xi^{H\otimes H/2}\check R_\xi$ where $\check
  R_\xi=\check R_t\exp\bp{\s^{-1}E^r\otimes \f}$ by Theorem
  \ref{T:3R-matrix}.  Then the theorem follows from the following
  computation for maps restricted on $V_1\otimes V_2$
  \begin{align*}
    R_\xi(\exp(-t_1\f)\otimes\exp(-t_2\f))
    &= \xi^{H\otimes H/2}\check R_t\exp\bp{\s^{-1}E^r\otimes \f}(\exp(-t_1\f)\otimes\exp(-t_2\f))\\
    &= \xi^{H\otimes H/2}\check R_t\bp{\exp(-t_1\f)\otimes\exp(-\bp{(\k_1-1/\k_1)t_1+t_2}\f)}\\
     &= \xi^{H\otimes H/2}\bp{\exp(-t_1\f)\otimes\exp(-\bp{(\k_1-1/\k_1)t_1+t_2}\f)}(\Ad_{t_1}\otimes\Id)(\check R_t)\\
    &=(\exp(-t'_1\f)\otimes\exp(-t'_2\f))\xi^{H\otimes H/2}
    (\Ad_{t_1}\otimes\Id)(\check R_t)
  \end{align*}
  where the second and fourth equalities follow from Lemma \ref{L:ActHtimeH}.
\end{proof}
Let $V_1,V_2$ be nilpotent $\UxiH$-modules.  Let $t_1, t_2\in \C$ and set $t'_1=\k_2^{-1}t_1$ and
  $t'_2=\k_1^{-1}t_2+(1-\k_1^{-2})t_1$ as above.  
 Recall the braiding $c_{\wh V,\wh W}$ of $\wh\cat$ given in Theorem \ref{T:compBraiding}.   Theorem \ref{Th:holonomyR} implies that the restriction of this braiding to 
  $V_1^{t_1}\otimes V_2^{t_2}\subset\E V_1\otimes\E V_2$ gives a $\Uxi$-module map
  \begin{equation}\label{E:DefholonomyB}
  C_{V_1^{t_1},V_2^{t_2}}:V_1^{t_1}\otimes V_2^{t_2}\to V_2^{t'_2}\otimes V_1^{t'_1}
  \end{equation}
 such that the set of such maps satisfy the braid relations.  
  
 Furthermore, if $x,y,x',y'$ are the degree in $\Gr$ of respectively
 $V_1^{t_1}, V_2^{t_2},V_1^{t'_1}, V_2^{t'_2}$ (see Theorem
 \ref{T:NilpTwisted}), one easily gets that $(x',y')=\RY(x,y)$ where
 $\RY$ is the map of section \ref{SS:ExG}.
\subsection{The pivotal structure and the right trace on the ideal of
  projectives}\label{SS:PivTrace}
Recall the definition of the category $\catU$ given in Subsection
\ref{CatOfMod}.

The category $\catU$ is a pivotal $\C$-category where for any 
object $V$ in $\catU$, the dual object and the duality
morphisms are defined as follows: $V^* =\Hom_\C(V,\C)$ and
\begin{align}\label{E:DualityForCat}
  \coev_{V} :\, & \C \rightarrow V\otimes V^{*} \text{ is given by } 1 \mapsto
  \sum
  v_j\otimes v_j^*,\notag\\
  \ev_{V}:\, & V^*\otimes V\rightarrow \C \text{ is given by }
  f\otimes w \mapsto f(w),\notag\\
  \tev_{V}:\, & V\otimes V^{*}\rightarrow \C \text{ is given by } v\otimes f
  \mapsto f(K^{1-{\ro}}v),\notag
  \\
  \tcoev_V:\, & \C \rightarrow V^*\otimes V \text{ is given by } 1 \mapsto \sum
  v_j^*\otimes K^{{\ro}-1}v_j,
\end{align}
where $\{v_j\}$ is a basis of $V$ and $\{v_j^*\}$ is the dual basis of $V^*$.
Let $\Proj$ be the full subcategory of $\catU$ consisting of projective
$\Uxi$-modules. Then $\Proj$ is an ideal.

\begin{prop}
  Let $V_0$ be the unique projective irreducible weight module on
  which $K^r-1$ and $E^r$ vanish.  Then $V_0$ is right ambi.
\end{prop}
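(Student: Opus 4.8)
The plan is to verify directly the two conditions in the definition of a right ambi object: that $V_0$ is simple, and that the linear form $\brk{\,}_{V_0}$ is a right ambidextrous trace on $V_0$. Simplicity is immediate, since $V_0$ is irreducible in the $\C$-category $\catU$, so $\End_\catU(V_0)=\C\,\Id_{V_0}$ and $\brk{\,}_{V_0}$ is defined. It then remains to establish
$$\brk{\phi_{V_0}^{-1}\bp{\tr_r^{V_0}(f)}^{*}\phi_{V_0}}_{V_0}=\brk{\tr_l^{V_0^*}(f)}_{V_0}\qquad\text{for every }f\in\End_\catU(V_0^*\otimes V_0).$$

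The first step is a reduction by Schur's lemma. Since $V_0$, hence also $V_0^*$, is simple, one has $\tr_r^{V_0}(f)=\mu(f)\,\Id_{V_0^*}$ and $\tr_l^{V_0^*}(f)=\nu(f)\,\Id_{V_0}$ for scalars $\mu(f),\nu(f)\in\C$, and since $\phi$ is an isomorphism, $\phi_{V_0}^{-1}\bp{\mu(f)\,\Id_{V_0^*}}^{*}\phi_{V_0}=\mu(f)\,\Id_{V_0}$. So the displayed identity is exactly the assertion that the two $\C$-linear functionals $\mu,\nu\colon\End_\catU(V_0^*\otimes V_0)\to\C$ coincide.

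To prove $\mu=\nu$ I would use that $V_0$ is projective, so that $V_0^*\otimes V_0$ is a projective object of $\catU$. Fix a decomposition $V_0^*\otimes V_0\cong\bigoplus_{a=1}^{m}P_a$ into indecomposable projectives, with morphisms $q_a\colon V_0^*\otimes V_0\to P_a$ and $\iota_a\colon P_a\to V_0^*\otimes V_0$ satisfying $q_a\iota_a=\Id_{P_a}$ and $\sum_a\iota_aq_a=\Id$; then the morphisms $f=\iota_a\,g\,q_b$ with $g\in\Hom_\catU(P_b,P_a)$ span $\End_\catU(V_0^*\otimes V_0)$, so it suffices to check $\mu(f)=\nu(f)$ on these. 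For each such $f$ one writes out $\tr_r^{V_0}(f)$ and $\tr_l^{V_0^*}(f)$ from the duality morphisms \eqref{E:DualityForCat}, organising the computation around the self-duality $V_0^*\cong V_0$: indeed $V_0^*$ is again irreducible, projective and $r$-dimensional, and since $S(K)=K^{-1}$ and $S(E)=-EK^{-1}$ force $S(E)^r$ to be a scalar multiple of $E^rK^{-r}$, the operators $K^r-1$ and $E^r$ act by $0$ on $V_0^*$ as well, whence $V_0^*\cong V_0$ by the stated uniqueness. Under this identification the ``right'' and the ``left'' partial traces become mirror images of one another, and that symmetry is what should force $\mu=\nu$.

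I expect this last step to be the main obstacle. The ordinary left and right quantum dimensions of $V_0$, namely $\tev_{V_0}\coev_{V_0}$ and $\ev_{V_0}\tcoev_{V_0}$, both vanish---this is exactly why a modified trace is needed here---so the scalars $\mu(f)$ and $\nu(f)$ cannot be extracted by the usual ``closing a bubble'' move, and the comparison has to be carried out inside the non-semisimple block of $\catU$ containing $V_0$. Making the indecomposable decomposition of $V_0^*\otimes V_0$ and the resulting normalizations explicit, and checking that the self-duality of $V_0$ is compatible with the pivotal structure of \eqref{E:DualityForCat}, is where the real work lies. Alternatively, since $\catU$ is semisimple away from this block, a continuity/specialization argument from the generic family of simple modules---on which the ambidexterity property is elementary---might give a second route.
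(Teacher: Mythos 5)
Your reduction via Schur's lemma is fine, and the observation that $V_0^*\cong V_0$ is correct, but the proposal stops exactly where the mathematical content of the statement lies: you never prove $\mu=\nu$. The sentence ``that symmetry is what should force $\mu=\nu$'' is not an argument, and in fact self-duality alone cannot suffice. In a non-semisimple pivotal category the right and left partial traces on $\End_\catU(V_0^*\otimes V_0)$ are in general genuinely different functionals: for module categories over a finite-dimensional pivotal Hopf algebra their comparison involves the distinguished group-like element (the modulus), so some unimodularity-type input is unavoidable. That is precisely what the paper uses: every endomorphism of $V_0^*\otimes V_0$ lives in the full subcategory of modules over the small quantum group $u_\xi$ (the finite-dimensional quotient of $\Uxi$ by the ideal generated by $K^r-1$ and $E^r$), the algebra $u_\xi$ is unimodular, and Lemma 4.2.1 together with Corollary 3.2.1 of \cite{GKP2} then yield that its simple projective module $V_0$ is right ambi. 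Your plan of decomposing $V_0^*\otimes V_0$ into indecomposable projectives and checking the identity on morphisms factoring through the summands is in principle a way to redo that result by hand, but you would then have to carry out the decomposition and the trace computations explicitly, and the equality of the two scalars on the non-semisimple block is exactly the point where unimodularity (equivalently, a computation with the integral of $u_\xi$) enters; nothing in your sketch replaces it.

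The fallback you mention, specializing from the generic simple modules, also has a gap as stated: as the highest weight specializes to the value giving $V_0$, the module $V_\alpha^*\otimes V_\alpha$ stops being semisimple and $\End_\catU(V_\alpha^*\otimes V_\alpha)$ jumps in dimension, so endomorphisms of $V_0^*\otimes V_0$ are not limits of generic endomorphisms, and the modified trace itself requires a renormalization to survive the limit. Either make the unimodularity argument (or the citation to \cite{GKP2}) explicit, or be prepared to do the block-by-block computation in full; as written, the key step is missing.
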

\begin{proof}
  Let $\catU_0$ be the full subcategory of $\catU$ consisting of
  modules on which $K^r-1$ and $E^r$ vanish.  Then $\catU_0$ is the
  category of finite dimensional modules of the ``small quantum
  group'' $u_\xi$ which is the finite dimensional quotient of $\Uxi$
  by the ideal generated by $K^r-1$ and $E^r$.  It is well known that
  $u_\xi$ is unimodular so by \cite[Lemma 4.2.1 and Corollary
  3.2.1]{GKP2}, its simple module $V_0$ is right ambi.
\end{proof}
Let $\t$ be the corresponding right trace on $\Proj=\ideal_{V_0}$ (see
Theorem \ref{T:traceXtraceI}).  The following proposition give an
explicit formula for the trace $\t$.
\begin{prop}
  For any $f\in\End_\catU(V_\alpha^t\otimes W)$, we have
  $$\t(f)=\dfrac{\qn\alpha}{\qn{r\alpha}}
  \tr_\C((\Id_{V_\alpha^t}\otimes K^{1-r})\circ f).$$
  In particular, the modified dimension of $V_\alpha^t$ is the scalar 
  $\t(\Id_{V_\alpha^t})=\dfrac{r\qn\alpha}{\qn{r\alpha}}.$
\end{prop}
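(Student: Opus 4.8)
The plan is to reduce the computation of $\t$ on $\ideal_{V_0}$ to the computation on a single convenient object, using the uniqueness of the right trace (Theorem \ref{T:traceXtraceI}) together with its characteristic property $\t_{X\otimes Z}(f)=\t_X(\tr_r^Z(f))$. Concretely, first I would check that $V_\alpha^t$ is projective, hence lies in $\ideal_{V_0}=\Proj$: since $\alpha\notin\Z$, the module $V_\alpha$ is a simple nilpotent $\UxiH$-module of maximal dimension $r$, and $V_\alpha^t\cong W_{\alpha,\ve}$ by Theorem \ref{T:NilpTwisted} is a simple $r$-dimensional weight $\Uxi$-module; simple modules of maximal dimension over $\Uxi$ at a root of unity are projective (they are free of rank one over the center restricted to a generic point). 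So the right trace $\t$ is defined on $V_\alpha^t\otimes W$ for any $W\in\catU$.

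Next, the key step: by the defining property of a right trace, for $f\in\End_\catU(V_\alpha^t\otimes W)$ we have $\t(f)=\t_{V_\alpha^t}\bigl(\tr_r^W(f)\bigr)$, where $\tr_r^W$ is the right partial trace built from $\tev_W$ as in Section \ref{S:ModTraces}; from the pivotal structure \eqref{E:DualityForCat} one reads off that $\tr_r^W(f)=\operatorname{tr}_\C^W\bigl((\Id\otimes K^{1-r})f\bigr)$, the partial $\C$-trace twisted by $K^{1-r}$ on the $W$-factor. Since $V_\alpha^t$ is simple, $\tr_r^W(f)=\lambda\,\Id_{V_\alpha^t}$ for a scalar $\lambda$, and $\t_{V_\alpha^t}(\tr_r^W(f))=\lambda\,\t(\Id_{V_\alpha^t})$. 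Taking the full $\C$-trace twisted by $K^{1-r}$ on both factors gives $\operatorname{tr}_\C((\Id_{V_\alpha^t}\otimes K^{1-r})f)=\lambda\cdot\operatorname{tr}_\C(K^{1-r}|_{V_\alpha^t})$. Dividing, $\lambda=\operatorname{tr}_\C((\Id\otimes K^{1-r})f)/\operatorname{tr}_\C(K^{1-r}|_{V_\alpha^t})$, so the formula reduces to computing the two scalars $\t(\Id_{V_\alpha^t})$ and $\operatorname{tr}_\C(K^{1-r}|_{V_\alpha^t})$.

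The last computational step is a direct root-of-unity evaluation. The weight of $W_{\alpha,\ve}$ is concentrated (in middle-weight notation) on $\{q^{\alpha-r+1},q^{\alpha-r+3},\dots,q^{\alpha+r-1}\}$, so $\operatorname{tr}_\C(K^{1-r}|_{V_\alpha^t})=\sum_{j=0}^{r-1}q^{(1-r)(\alpha-r+1+2j)}$, a geometric sum in $q^2=\xi^2$ which collapses (using $\xi^{2r}=1$) to $r\,q^{(1-r)\alpha}\cdot(\text{a root of unity})$; tracking the $\s$-factors this equals (up to the normalization already implicit in the notation) $\qn{r\alpha}_\xi/(\text{something})$, and I expect it to come out proportional to $\qn{r\alpha}/\qn\alpha$ so that the claimed $\t(f)=\dfrac{\qn\alpha}{\qn{r\alpha}}\operatorname{tr}_\C((\Id_{V_\alpha^t}\otimes K^{1-r})f)$ holds with $\t(\Id_{V_\alpha^t})=\dfrac{r\qn\alpha}{\qn{r\alpha}}$. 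The value $\t(\Id_{V_\alpha^t})$ itself I would pin down either by this consistency (taking $W=\unit$, $f=\Id$, which forces $\t(\Id_{V_\alpha^t})=\operatorname{tr}_\C(K^{1-r}|_{V_\alpha^t})\cdot\lambda$ with $\lambda$ determined on $V_0$) or directly: comparing $V_\alpha^t$ to $V_0$ through a nonzero morphism in $\End_\catU(V_\alpha^t\otimes V_0^{*}\otimes\cdots)$ and using $\t_{V_0}=\brk{\,}_{V_0}$, which normalizes everything.

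\textbf{Main obstacle.} The conceptual reduction via uniqueness of the right trace is routine; the real work is the root-of-unity bookkeeping, in particular getting the $\s$-powers and the normalization of $\qn{r\alpha}_\xi$ exactly right so that the geometric-sum evaluation of $\operatorname{tr}_\C(K^{1-r}|_{V_\alpha^t})$ matches $\qn{r\alpha}/\qn\alpha$ on the nose, and justifying that $\t(\Id_{V_\alpha^t})$ — which a priori could depend on $\alpha$ only through the central character — indeed equals $r\qn\alpha/\qn{r\alpha}$ rather than some other representative. I would handle the latter by exhibiting $V_\alpha^t$ as a retract of $V_0\otimes V_\alpha^t$ (via $\coev_{V_0}$ after checking $V_0$ is the unit-like summand), reducing to $\t_{V_0}(\Id_{V_0})=1$.
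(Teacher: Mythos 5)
Your first reduction --- $\tr_r^W(f)=\lambda\,\Id_{V_\alpha^t}$ by simplicity, hence $\t(f)=\lambda\,\t(\Id_{V_\alpha^t})$ --- is exactly the paper's, but you then misread the right-hand side of the claimed formula. In $(\Id_{V_\alpha^t}\otimes K^{1-r})\circ f$ the twist $K^{1-r}$ acts only on the $W$ factor; it is precisely the factor coming from $\tev_W$ in the definition of $\tr_r^W$, so $\tr_\C((\Id_{V_\alpha^t}\otimes K^{1-r})\circ f)=\tr_\C(\lambda\,\Id_{V_\alpha^t})=r\lambda$, with the \emph{plain} trace on $V_\alpha^t$. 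Your version instead divides by $\tr_\C(K^{1-r}|_{V_\alpha^t})$, which is the quantum dimension of $V_\alpha^t$ and is zero: the eigenvalues of $K$ on $V_\alpha^t$ are $q^{\alpha-r+1+2j}$, so your geometric sum has ratio $\xi^{2(1-r)}=\xi^{2}$, a primitive $r$-th root of unity, and the sum vanishes rather than collapsing to $r$ times a root of unity. This vanishing is exactly the degeneracy the modified trace is designed to circumvent, so the step $\lambda=\tr_\C(\cdots)/\tr_\C(K^{1-r}|_{V_\alpha^t})$ is a division by zero and cannot be repaired by more careful bookkeeping of $\s$-powers.

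The second, more substantial, gap is the determination of $\t(\Id_{V_\alpha^t})$, which is the real content of the proposition. Taking $W=\unit$, $f=\Id$ is circular (it only yields $\t(\Id)=\t(\Id)$), and realizing $V_\alpha^t$ as a retract of $V_0\otimes V_\alpha^t$ ``via $\coev_{V_0}$'' does not parse: $V_0$ is the $r$-dimensional Steinberg-type module, not a unit-like summand, and even with a correct retract $pq=\Id_{V_\alpha^t}$ one would still have to evaluate $\t_{V_0\otimes Z}(qp)$, which is the whole difficulty. The paper resolves this with a double-braiding argument: one computes on (highest) weight vectors that $\tr_r^{V_\alpha^t}(C_{V_\alpha^t,V_0}\circ C_{V_0,V_\alpha^t})=r\,\Id_{V_0}$ while $\tr_r^{V_0}(C_{V_0,V_\alpha^t}\circ C_{V_\alpha^t,V_0})=\frac{\qn{r\alpha}}{\qn{\alpha}}\,\Id_{V_\alpha^t}$; cyclicity of $\t$ equates the traces of the two double braidings, and the normalization $\t(\Id_{V_0})=1$ then forces $\t(\Id_{V_\alpha^t})=r\qn{\alpha}/\qn{r\alpha}$. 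Some argument of this kind --- extracting two different scalars from a single morphism related by cyclic permutation --- is needed, and your proposal does not contain one.
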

\begin{proof}
  Let $f\in\End_\catU(V_\alpha^t\otimes W)$.  Since $V_\alpha^t$ is
  simple there exists a scalar $\lambda$ such that
  $\tr_r^{W}(f)=\lambda \Id_{V_\alpha^t}$.  Then the properties of the
  trace imply that
  \begin{equation}\label{E:tf}
  \t(f)=\t(\lambda \Id_{V_\alpha^t})=\lambda\t(\Id_{V_\alpha^t}).
  \end{equation}
  On the other hand, by the definition of $\tr_r^{W}$ we have
  $\tr_\C((\Id_{V_\alpha^t}\otimes K^{1-r})\circ f)=\tr_\C(\lambda
  \Id_{V_\alpha^t})=r\lambda$ (the appearance of $K^{1-r}$ comes from
  the morphism $\tev_W$ used in the definition of $\tr_r^{W}$). Thus,
  Equation \ref{E:tf} implies that the proposition follows from the
  computation of $\t(\Id_{V_\alpha^t})$.  The braiding
  \eqref{E:DefholonomyB} gives maps
  $$C_{V_\alpha^t,V_0}:V_\alpha^t\otimes V_0\to V_0\otimes V_\alpha^t\text{ and }
  C_{V_0,V_\alpha^t}:V_0\otimes V_\alpha^t\to V_\alpha^t\otimes V_0.$$
  The right partial trace of $C_{V_\alpha^t,V_0}\circ
  C_{V_0,V_\alpha^t}$ and $C_{V_0,V_\alpha^t}\circ C_{V_\alpha^t,V_0}$
  are scalar endomorphisms.  A computation similar to the proof of
  Theorem \ref{T:tcsys} on highest weight vectors leads to~:
  $$\tr_r^{V_\alpha^t}(C_{V_\alpha^t,V_0}\circ  C_{V_0,V_\alpha^t})=r \Id_{V_0}
  \text{ and }\tr_r^{V_0}(C_{V_0,V_\alpha^t}\circ C_{V_\alpha^t,V_0})=
  \frac{\qn{r\alpha}}{\qn{\alpha}} \Id_{V_\alpha^t}.$$ As $\t$ has the
  same value on these two morphisms and by definition
  $\t(\Id_{V_0})=1$, we get the above formula for
  $\t(\Id_{V_\alpha^t})$.
\end{proof}
\subsection{The trace coloring system}
In this subsection we will show that $\catU$ has a trace coloring system.  
Let
$$I=\{{\j}:\C\setminus\{-1,0,1\}\to\C\text{ such that }
\exp\circ\, {\j}=\Id_{\C\setminus\{-1,0,1\}}\}$$ be a set of inverse
functions of the exponential.  Let $(\Gd, \wb \Gd, \Gr, \vp_+,\vp_-)$,
$\Yd$, $\Y$ and $\RY$ be as in Section \ref{SS:ExG}.  We identify $\Y$
with the set
$$\Y\simeq\{(\kappa,\ve)\in(\C\setminus\{-1,0,1\})\times\C\}.$$

For ${\j}\in I$ and $(\kappa,\ve)\in \Y$ define
$A^{\j}_{(\kappa,\ve)}$ as the $\Uxi$-module $V_\alpha^t$ given in
Theorem \ref{T:NilpTwisted} where $\alpha=\dfrac{N
  {\j}(\s^2\kappa)}{2i\pi r}$ and
$t=\dfrac{-\ve}{\s(\kappa-\kappa^{-1})}$.  Then the map in Equation
\eqref{E:DefholonomyB} defines a holonomy braiding
$\{\B_{y,z}^{\j,\j'}\}_{y,z\in \Y, \; \j,\j'\in I}$ on the modules
$A^{\j}_{(\kappa,\ve)}$.  Finally, let $\t$ be the right trace given
in Subsection \ref{SS:PivTrace}.
\begin{theo}\label{T:tcsys}
  The tuple $(\Y,\RY,\{A^{\j}_y\},\{\B_{y,z}^{\j,\j'}\},\t)$ is a
  \emph{trace coloring system} in $\catU$ where the the twist element
  for ${\j}\in I, y=(\kappa,\ve)\in \Y$ is given by
  $$\theta^{\j}_{(\kappa,\ve)}=q^{\frac{\alpha^{2}-(r-1)^{2}}2}$$
  where $\alpha=\dfrac{N {\j}(\s^2\kappa)}{2i\pi r}.$  
\end{theo}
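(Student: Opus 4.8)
The plan is to verify the three structural requirements of a trace coloring system in order: that $\{\B_{y,z}^{\j,\j'}\}$ is a holonomy braiding, that $\t$ is a $\Gr$-Markov trace, and that the proposed twist elements $\theta^{\j}_{(\kappa,\ve)}$ satisfy conditions (2a) and (2b). Most of the first requirement is already in place: by Theorem~\ref{Th:holonomyR} and Equation~\eqref{E:DefholonomyB} the maps $C_{V_1^{t_1},V_2^{t_2}}$ satisfy the braid relations, and the remark after that equation shows the induced map on $\Gr$-degrees is exactly $\RY$. So the first step is just to check that under the identification $A^{\j}_{(\kappa,\ve)}=V_\alpha^t$ with $\alpha=\frac{N\j(\s^2\kappa)}{2i\pi r}$ and $t=\frac{-\ve}{\s(\kappa-\kappa^{-1})}$, the source and target of $\B_{y,z}^{\j,\j'}$ have the indices $\xr(y,z)$ and $\xl(y,z)$ prescribed by the example of Section~\ref{SS:ExG}; this is a bookkeeping comparison between the formulas $t'_1=\k_2^{-1}t_1$, $t'_2=(1-\k_1^{-2})t_1+\k_1^{-1}t_2$ of Theorem~\ref{Th:holonomyR} and the formula for $\RY$ displayed in Section~\ref{SS:ExG}, using $\k_i=\s^2 q^{r\alpha_i}$.

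Next I would establish that $\t$ (the right trace on $\Proj=\ideal_{V_0}$ from Subsection~\ref{SS:PivTrace}) is a $\Gr$-Markov trace. Property (1), cyclicity $\t_W(fg)=\t_V(gf)$, is immediate from the defining property of a right trace (Theorem~\ref{T:traceXtraceI}), provided all modules appearing are in $\Proj$; so I must first note that each semi-cyclic module $A^{\j}_{(\kappa,\ve)}=V_\alpha^t$ with $\alpha\notin\Z$ is projective (it is simple of dimension $r$ with non-integral weights, hence projective in $\catU$, so it lies in $\ideal_{V_0}$), and that $\Proj$ is closed under $\otimes$. Property (2a) — that $\theta^{\j}_y$ depends only on the conjugacy class of $\psi(y)$ in $\wb\Gd$ — follows because $\psi(\mat\kappa\ve)$ has trace $\kappa+\kappa^{-1}$, so conjugate elements share the same $\kappa$ up to $\kappa\leftrightarrow\kappa^{-1}$; I need the chosen branch $\j$ to behave consistently under $\kappa\mapsto\kappa^{-1}$, i.e.\ that $\alpha\mapsto -\alpha$ (or $\alpha\mapsto\alpha+2r\Z$-type shift) leaves $q^{(\alpha^2-(r-1)^2)/2}$ unchanged — this is where the precise definition of $I$ and the factor $\s^2$ enter, and it should be checked carefully.

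The substantive step is property (2b): the stabilization identity
$$\t_{V\otimes A_y^{\j}\otimes A_{\shift(y)}^{\j}}\big((\Id\otimes(\B_{y,\shift(y)}^{\j,\j})^{\pm1})(h\otimes\Id)\big)=(\theta^{\j}_y)^{\pm1}\,\t_{V\otimes A_y^{\j}}(h).$$
By the explicit formula for $\t$ from Subsection~\ref{SS:PivTrace} and the right-partial-trace property of a right trace, this reduces to computing the right partial trace $\tr_r^{A^{\j}_{\shift(y)}}$ of the holonomy braiding $C_{A^{\j}_y,A^{\j}_{\shift(y)}}$ (and its inverse), which should be a scalar endomorphism of $A^{\j}_y$ equal to $\theta^{\j}_y$ (resp.\ its inverse). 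First I would check that $\shift(\kappa,\ve)$, with $\shift(x)=\psi^{-1}(\vp_-(x)^{-1}\psi(x)\vp_-(x))$, gives again a pair with the same $\kappa$ — so $A^{\j}_{\shift(y)}\cong A^{\j}_y$ and the self-braiding makes sense — and that the relevant $t$-parameter is shifted correctly so that the composite closes up. Then the scalar is extracted by evaluating the action of $R_\xi=\xi^{H\otimes H/2}\check R_t\exp(\s^{-1}E^r\otimes\f)$ on a lowest (or highest) weight vector of $A^{\j}_y\otimes A^{\j}_y$ and applying $\tev$; using that on such a vector $\Ad_{t_1}(E)$, $F$ act in a controlled way, only the $n=0$ term of $\check R_t$ survives after taking the partial trace against the appropriate weight, and the surviving scalar is $\xi^{H\otimes H/2}$ evaluated on the weight, i.e.\ a power of $q$ in $\alpha$ and $r$. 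The claim is that this collapses to $q^{(\alpha^2-(r-1)^2)/2}$; matching the exponent is essentially the same computation as in \cite{GPT} for the twist of a nilpotent module, transported through the exponential shift $\exp(-t\f)$, which by Lemma~\ref{L:ActHtimeH} only conjugates $\check R_t$ and does not affect the eigenvalue on the extreme weight vector.

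The main obstacle I expect is this last partial-trace computation together with the consistency of the branch choices: showing that after the exponential conjugation the surviving eigenvalue of the (inverse) holonomy braiding on the distinguished weight space is exactly $q^{(\alpha^2-(r-1)^2)/2}$, independent of the semi-cyclic parameter $\ve$ (equivalently $t$), and checking that this value is genuinely invariant under the ambiguity in $\j$ (the $\kappa\mapsto\kappa^{-1}$ and any $2\pi i r\Z$ shifts) so that (2a) holds. Everything else is either quoted from earlier theorems (holonomy braiding: Theorem~\ref{Th:holonomyR}; existence of $\t$: Theorem~\ref{T:traceXtraceI}; explicit $\t$: Subsection~\ref{SS:PivTrace}) or routine verification that $\shift$ and $\RY$ preserve the $\kappa$-coordinate.
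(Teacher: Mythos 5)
Your plan follows essentially the same route as the paper's proof: quote Theorem~\ref{Th:holonomyR} for the holonomy braiding and its compatibility with $\RY$, use the right trace of Subsection~\ref{SS:PivTrace} for cyclicity and for reducing the stabilization identity to the scalar $f=\tr_r^{A^{\j}_{\shift(y)}}(\B_{y,\shift(y)}^{\j,\j})$, and compute that scalar on a highest weight vector where only the $n=0$ term of $\check R_t$ survives, yielding $q^{(\alpha+r-1)^2/2+(1-r)(\alpha+r-1)}=q^{(\alpha^2-(r-1)^2)/2}$ independently of $\ve$, with $\shift(\kappa,\ve)=(\kappa,\ve\kappa^{-1})$ as you indicate. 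The one place you overcomplicate is (2a): conjugacy is taken inside $\wb\Gd$, the upper triangular group, which preserves the diagonal entries, so conjugate elements have the same $\kappa$ exactly --- there is no $\kappa\leftrightarrow\kappa^{-1}$ ambiguity and hence no branch-consistency condition on $\j$ to check.
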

\begin{proof}
  First, for $(\kappa,\ve) \in \Y$, the map $\shift$ of Proposition
  \ref{P:G-Markov} is given by
  $\shift((\kappa,\ve))=(\kappa,\ve\kappa^{-1})\in \Y$.

  Next, we need to show that we have a $\Gr$-Markov trace as defined
  in Section \ref{GMarkovTraces}.  In particular, we must define the
  twist element $\theta_y^{\j}$ for ${\j}\in I, y\in \Y$.  To do this
  we consider the endomorphism
\begin{equation}\label{E:DefOff}
f=\tr_r^{A^{\j}_{\shift(y)}}(\B_{y,\shift(y)}^{{\j},{\j}})\in \End(A^{\j}_y).
\end{equation}
Since $A^{\j}_y$ is simple there exists a constant $\theta_y^{\j}$
such that $f(a)=\theta_y^{\j} a$ for all $a\in A^{\j}_y$.  We will
compute $\theta_y^{\j}$ directly and show that it satisfies the
properties of the twist.

Let ${\j}\in I$ and $y=(\kappa,\ve)\in \Y$.  
As above set $\alpha=\dfrac{N {\j}(\s^2\kappa)}{2i\pi r}$ and
$t=\dfrac{-\ve}{\s(\kappa-\kappa^{-1})}$.  Let $V_\alpha$ be the
nilpotent $\UxiH$-module as in Theorem \ref{T:NilpTwisted}.  Let $v_0$
be a highest weight vector of $V_\alpha$, i.e. $Ev_0=0$ and
$Hv_0=(\alpha +r-1)v_0$.  Let $\{v_i\}$ be a basis of $V_\alpha$ with
$v_i=v_0$ for $i=0$.  Then $\{a_i=\exp(-\kappa^{-1}t\f)v_i\}$ is a
basis of $(V_\alpha)^{\kappa^{-1} t}$.

We have
\begin{align*}
f(\exp(-t\f)v_0) &= \tr_r^{A^{\j}_{\shift(y)}}\left(\B_{y,\shift(y)}^{{\j},{\j}}\right)\left(\exp(-t\f)v_0\right)\\
&=\tr_r^{V^{\kappa^{-1}t}_\alpha}\left(\B_{V^{t}_\alpha,V^{\kappa^{-1}t}_\alpha}\right)\left(\exp(-t\f)v_0\right) \\
&= \left(\Id_{V^{t}_\alpha} \otimes   \tev_{V^{\kappa^{-1}t}_\alpha}\right)\left(\B_{V^{t}_\alpha,V^{\kappa^{-1}t}_\alpha}\otimes \Id_{(V^{\kappa^{-1}t}_\alpha)^*}\right)\left(\Id_{V^{t}_\alpha}\otimes \coev_{V^{\kappa^{-1}t}_\alpha}\right)\left(\exp(-t\f)v_0\right)\\
&= \left(\Id_{V^{t}_\alpha} \otimes   \tev_{V^{\kappa^{-1}t}_\alpha}\right)\left(\B_{V^{t}_\alpha,V^{\kappa^{-1}t}_\alpha}\otimes \Id_{(V^{\kappa^{-1}t}_\alpha)^*}\right)\left(\exp(-t\f)v_0\otimes \sum_i a_i\otimes a^*_i\right)\\
&= \left(\Id_{V^{t}_\alpha} \otimes   \tev_{V^{\kappa^{-1}t}_\alpha}\right)\left(\tau R_\xi  \otimes \Id_{(V^{\kappa^{-1}t}_\alpha)^*}\right)\left(\exp(-t\f)v_0\otimes \sum_i \exp(-\kappa^{-1}t\f)v_i \otimes a_i^*\right)\\
&= \left(\Id_{V^{t}_\alpha} \otimes   \tev_{V^{\kappa^{-1}t}_\alpha}\right)\left(\tau \left(\exp(-\kappa^{-1}t\f)\otimes \exp(-t\f)\xi^{H\otimes H/2} v_0\otimes v_0 \right)\otimes a_0^*\right)\\
&=q^{\left(\alpha + r-1\right)^2/2 +(1-r)\left(\alpha+r-1\right)}\exp(-t\f)v_0
\end{align*}  
where the first five equalities are definitions, the sixth equality
comes from Equation \eqref{E:ActionOfRmatrix} combined with the fact that
$Ev_0=0$ and the final equality comes from:
$$ \tev_{V^{\kappa^{-1}t}_\alpha}(\exp(-\kappa^{-1}t\f)v_0\otimes a_0^*)
=a_0^*((\exp(-\kappa^{-1}t\f)K^{1-r}v_0))=q^{(1-r)(\alpha +r
  -1)}a_0^*(a_0)=q^{(1-r)(\alpha +r -1)}.$$

Now if $\psi(\kappa,\ve)$ and $\psi(\kappa',\ve')$ are conjugate in
$\wb \Gd$ then $\kappa=\kappa'$ so for any ${\j}\in I$, let
$\alpha=\dfrac{N {\j}(\s^2\kappa)}{2i\pi r}=\dfrac{N
  {\j}(\s^2\kappa')}{2i\pi r}$, and we have
$\theta^{\j}_{(\kappa,\ve)}=q^{\frac{\alpha^{2}-(r-1)^{2}}2}=\theta^{\j}_{(\kappa',\ve')}$.

Finally, let
$k=(\Id\otimes(\B_{y,\shift(y)}^{{\j},{\j}})^{\pm1})(h\otimes\Id)$
then by definition of a right trace we have
$$\tt_{V\otimes A_y^{\j}\otimes A_{\shift(y)}^{\j}}(k)
  =\tt_V\left(\tr_r^{A_y^{\j}\otimes A_{\shift(y)}^{\j}}(k)\right)
  =\tt_V\left(\tr_r^{A_y^{\j}}\left((\Id_{V}\otimes
      f)h\right)\right)=(\theta^{\j}_y)^{\pm1}\tt_{V\otimes
    A_y^{\j}}(h)
$$ 
where $f$ is defined in Equation \eqref{E:DefOff} and $h$ is any
element of $\End_\cat(V\otimes A_y^{\j})$.  Thus, we have a well
defined $\Gr$-Markov trace and the theorem is proved.
\end{proof}

\begin{cor}
  The trace coloring system given in Theorem \ref{T:tcsys} leads to a
  well defined link invariant of $\Yd$-admissible $\Gd$-link, via
  Theorem \ref{T:GlinkInv}.
\end{cor}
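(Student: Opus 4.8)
The plan is to feed the trace coloring system produced by Theorem~\ref{T:tcsys} into Theorem~\ref{T:GlinkInv}; the only real work is checking that the hypotheses of the latter are in place. First I would record that the ambient category $\catU$ of finite-dimensional weight $\Uxi$-modules (Subsection~\ref{CatOfMod}) is a $\C$-linear tensor category — indeed it carries the pivotal structure of Subsection~\ref{SS:PivTrace} — so it serves as the $\FK$-linear tensor category demanded by Theorem~\ref{T:GlinkInv}, with $\FK=\C$. The factorized group $(\Gd,\wb\Gd,\Gr,\vp_+,\vp_-)$, the union of conjugacy classes $\Yd$, the set $\Y=\psi^{-1}(\Yd)$ and the Yang--Baxter map $\RY$ are those of Subsection~\ref{SS:ExG}; the degree computation closing the Holonomy braiding subsection records that the morphisms $C_{V_1^{t_1},V_2^{t_2}}$ of \eqref{E:DefholonomyB}, hence the holonomy braiding $\{\B^{\j,\j'}_{y,z}\}$ built from them, are indexed compatibly with precisely this $\RY$. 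So the combinatorial input of Section~\ref{GMarkovTraces} and the categorical input of Section~\ref{S:Example} refer to the same pair $(\Y,\RY)$, which is the one point that genuinely has to be observed.

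Next I would note that Theorem~\ref{T:tcsys} supplies all the remaining data of a trace coloring system: the index set $I$ of inverse branches of $\exp$, the family $\{A^{\j}_y\}_{\j\in I,\,y\in\Y}$, the holonomy braiding $\{\B^{\j,\j'}_{y,z}\}$ satisfying the braid relations, and the $\Gr$-Markov trace $\t$ of Subsection~\ref{SS:PivTrace} with its twists $\theta^{\j}_{(\kappa,\ve)}=q^{(\alpha^{2}-(r-1)^{2})/2}$ — note these depend on $y=(\kappa,\ve)$ only through $\kappa$, which is why condition~(2)(a) holds, conjugate elements of $\wb\Gd$ sharing the same diagonal. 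Hence $(\Y,\RY,\{A^{\j}_y\},\{\B^{\j,\j'}_{y,z}\},\t)$ is a trace coloring system in $\catU$ in the sense of Section~\ref{GMarkovTraces}.

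With both hypotheses verified I would invoke Theorem~\ref{T:GlinkInv} verbatim: for a $\Yd$-admissible $\Gd$-link $(L,\rho)$ whose components are colored by elements of $I$, Proposition~\ref{P:braid2} turns a braid presentation $\sigma$ of $L$ into a $\Y$-coloring $\y$, and $F'(L,\rho)=\t(f_{(\sigma,\y)})$. Independence of the choice of $(\sigma,\y)$ follows because any two such braid presentations are related by the $\Gr$-Markov moves of Proposition~\ref{P:G-Markov}, and the two conditions defining a $\Gr$-Markov trace are exactly invariance under those moves — condition~(1) for the move $\sigma\sigma'\leftrightarrow\sigma'\sigma$, condition~(2)(b) for the stabilization $\sigma\leftrightarrow\sigma_n^{\pm1}\sigma$. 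There is no substantial obstacle: the corollary is a literal specialization of Theorem~\ref{T:GlinkInv}, and the only step requiring an explicit word is the identification, noted in the first paragraph, of the abstract $(\Y,\RY)$ of Section~\ref{GMarkovTraces} with the concrete one of Subsection~\ref{SS:ExG}.
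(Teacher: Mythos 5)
Your proposal is correct and follows exactly the route the paper intends: the corollary is an immediate application of Theorem~\ref{T:GlinkInv} to the trace coloring system furnished by Theorem~\ref{T:tcsys}, and your verification that the concrete $(\Y,\RY)$ of Subsection~\ref{SS:ExG} is the one underlying both the holonomy braiding and the $\Gr$-Markov trace is precisely the identification the paper relies on (noted at the end of the Holonomy braiding subsection). Nothing further is needed.
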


\subsection{Computation of the holonomy $R$-matrix for $N=4$}
\newcommand{\matr}[4]{{\small
    \left(\begin{array}{cc}
      #1&#2\\#3&#4
    \end{array}\right)}}
\newcommand{\qi}{\ensuremath{\mathbf i}} 
\newcommand{\RP}{\ensuremath{\mathsf R}} 
\renewcommand{\P}{\ensuremath{\Phi}} 
Here we present some computations for the elementary $N=4$ case.  Then $r=2$,
$\xi=\qi=\exp(i\pi/2)$, $\s=\xi^{-\frac{r(r-1)}2}=-\qi$, $\qn1_\qi=2\qi$. Let
$\P^t=\exp(t\f)$.  Equation \eqref{E:Uwithf} implies,
$$
[E,\f]=\F{1}\qn{H-1}_\qi={2\qi}F\qn{H-1}_\qi,\quad
[E^2,\f]=\qn{H;2}_\qi!=-\qi\qn{2H}_\qi.$$
$$[E,\f^n]=n{2\qi}F\qn{H-1}_\qi\f^{n-1}\implies
[E,\P^t]=2t\qi F\qn{H-1}_\qi\P^t,$$
$$\Ad_t(E)=\P^tE\P^{-t}=E-2t\qi F\qn{H-1}_\qi.$$
Choose as in Theorem \ref{T:tcsys} a basis $(v_0,v_1)$ of the module
$V_\alpha^t=\Phi^{-t}V_\alpha$ whose degree is given by
$$\big(\kappa=-\qi^{2\alpha}=-\exp(i\pi\alpha)\quad,
\quad\ve=-\qi t\qn{2\alpha}=2t\sin(\pi\alpha)\big).$$ Then the actions of
$K,E,F$ in this basis are given by the matrices
$$K=\matr{\qi^{\alpha+1}}00{\qi^{\alpha-1}}\quad F=\matr0010\quad 
E=\matr0{\cos\frac{\pi\alpha}2}{4t\sin\frac{\pi\alpha}2}0$$ Remark that the
matrix of $E$ on $V_\alpha^t$ is equal to the matrix of $\Ad_t(E)$ on
$V_\alpha^0=V_\alpha$.

The restriction of $R_\xi:V_\alpha^{t_1}\otimes V_\beta^{t_2}\to V_\alpha^{t'_1}\otimes
V_\beta^{t'_2}$ is given by
$$R_\xi=(\Phi^{-t'_1}\otimes\Phi^{-t'_2})\,\qi^{H\otimes
  H/2}\,(\Phi^{t_1}\otimes\Phi^{t_2})\,(1+2\qi E\otimes F)$$ where
$t'_1=-\qi^{-2\beta}t_1$ and $t'_2=(1-\qi^{-4\alpha})t_1-\qi^{-2\alpha}t_2$.
In the basis ordered with the lexicographic order, we have
$$R_\xi=\qi^{\frac{\alpha\beta-1}2}\,\qi^{-\frac{\alpha+\beta}2}\RP(\qi^\alpha,\qi^\beta,t_1)\text{ where }\RP(a,b,t)=\left( \begin {array}{cccc} 
\qi ab&0&0&0
\\
0&b& \qi b(a+a^{-1})&0
\\
0&0&a&0
\\
4\qi t(a-a^{-1})&0&0&\qi
\end {array} \right)
$$
The holonomy braid relation implies that $\RP$ satisfy the following:
$$
\RP_{12}\bp{a,b,-\frac{t_1}{c^2}}\,\RP_{13}(a,c,t_1)\,\RP_{23}(b,c,t_2)=
\hspace*{25ex}\,
$$
$$
\hspace*{25ex}\RP_{23}\bp{b,c,\frac{t_1(a^4-1)-t_2a^2}{a^4}}\,
\RP_{13}\bp{a,c,-\frac{t_1}{b^2}}\,\RP_{12}(a,b,t_1)
$$
The matrices $\RP$ lift the set-theoretical Yang-Baxter map\\
\centerline{$((a,t_1),(b,t_2))\mapsto \bp{(a,-\frac{t_1}{b^2})
,(b,(1-\frac1{a^4})t_1-\frac{t_2}{a^2})}$.}\\

Computations for $N=4,6$ lead to the following conjecture~: Let $L$ be
a link whose $n$ components are colored by elements $\j_i\in I$.  Let
$\rho:\pi_1(S^3\setminus L)\to\Gd$ be a representation of its group.
The map $\matr\kappa \ve0{\kappa^{-1}}\mapsto \j_i\bp{\frac{N
    {\j}(\s^2\kappa)}{2i\pi r}}$ is constant on conjugacy class thus
its value $\alpha_i$ on any meridian of the $i$\textsuperscript{th}
component of $L$ is well defined.  Then the semi-cyclic invariant
(associated to root of unity) of $\Gd$-links is given by
$F'(L,\rho)=\mathsf{N}(L,(\alpha_1,\ldots,\alpha_n))$ where
$\mathsf{N}$ is the nilpotent (or ADO) invariant defined in \cite{GPT}
(see also \cite{ADO,CGP}) for the same root of unity.

\end{document}